\numberwithin{equation}{section}
\newcommand{\N}{\mathbb N}
\newcommand{\R}{\mathbb R}
\def\E{\mathbb E}
\def\P{\mathbb P}
\newcommand{\Pw}{\mathcal P_2(\R^d)}
\newcommand{\Pk}{\mathcal P_1(\R^d)}
\newcommand{\ms}{\mathcal}
\def\XXint#1#2#3{{\setbox0=\hbox{$#1{#2#3}{\int}$}
\vcenter{\hbox{$#2#3$}}\kern-.5\wd0}}
\numberwithin{equation}{section}
\newtheorem{thm}{Theorem}[section]
\newtheorem{lem}[thm]{Lemma}
\newtheorem{cor}[thm]{Corollary}
\newtheorem{prop}[thm]{Proposition}
\theoremstyle{definition}
\newtheorem{defn}[thm]{Definition}
\newtheorem{rmk}[thm]{Remark}
\def\smallnegint{\mathop{\int\mkern-13mu
        \raise.5ex\hbox{${\scriptscriptstyle\diagup}$}}\nolimits}
\def\ds{\displaystyle}
\def\ep{\varepsilon}
\def\bx{{\bf x}}
\def\ssetminus{\,\raise.4ex\hbox{$\scriptstyle\setminus$}\,}
\def \lg{\langle}
\def \rg{\rangle}
\newcommand{\be}{\begin{equation}}
\newcommand{\ee}{\end{equation}}
\newcommand{\bc}{\begin{case}}
\newcommand{\ec}{\end{cases}}
\newcommand{\bs}{\begin{split}}
\newcommand{\es}{\end{split}}
\newcommand{\vs}{\vskip.075in}
\renewcommand{\bar}{\overline}
\renewcommand{\tilde}{\widetilde}
\renewcommand{\hat}{\widehat}
\def\dk{{\bf d}_1}
\begin{document}
\title[Mean field control]{Regularity of the value function and quantitative propagation of chaos for mean field control problems}

\author{Pierre Cardaliaguet$^{(1)}$ and Panagiotis E Souganidis$^{(2)}$}

\dedicatory{Version: \today}


\begin{abstract} We investigate a mean field optimal control problem obtained in the limit of the optimal control of large particle systems {with forcing and terminal data which are not assumed to be convex.} We prove that the value function,  which is known to be Lipschitz continuous but not of class $C^1$, in general,  {without convexity}, is actually smooth in an open and dense subset of the space of times and probability measures.  As a consequence, we prove a new quantitative propagation of chaos-type result  for the optimal solutions of the particle system starting from this open and dense set.  
\end{abstract}

\maketitle

\section*{Introduction} 

The paper is about the regularity of the value function  and quantitative propagation of chaos for mean field control (MFC for short) problems obtained as the limit of optimal control problems for  large particle systems {with forcing and terminal data which are not assumed to be convex.}. The value function of MFC problems is known to be a Lipschitz continuous but, in general,  not $C^1-$function in the space of  time and probability measures. Our first result is that there exists an open and dense subset of time and probability measures where 
the value function is actually smooth. The second result is a new quantitative propagation of chaos-type property  for the optimal solutions of the particle system starting from this open and  dense set. 
\vs
 
\subsection*{The background} In order to state the results it is necessary to introduce the general set-up. 
\vs

We consider the problem of controlling optimally $N$ particles in order to minimize a criterion of the form
\be\label{defJN}
J^N(t_0,{\bf x}_0, \alpha) :=\E\left[ \frac{1}{N} \int_{t_0}^T\sum_{i=1}^N L(X^i_t, \alpha^i_t)dt + \int_{t_0}^T \mathcal F(m^N_{{\bf X}_t})dt + \mathcal  G(m^N_{{\bf X}_T})\right].
\ee
Here $T>0$ is a fixed time horizon, $t_0\in [0,T]$ and $\bx_0=(x^1_0,\dots, x^N_0)\in (\R^d)^N$ are respectively  the initial time and the initial position of the system at time $t_0$. The minimization is over the set $\ms A^N$  of admissible controls $\alpha= (\alpha^k)_{k = 1}^N$ in $L^2([0,T] \times \Omega ; (\R^d)^N)$ which are adapted to the filtration generated by the independent $d-$dimensional Brownian motions $(B^i)_{i=1, \dots, N}$,  and  the trajectories 
${\bf X}= (X^1, \dots, X^N)$ satisfy, for each $k\in \{1, \dots, N\}$, 
$$
X^k_t = x^k_0+\int_{t_0}^t \alpha^k_s ds +\sqrt{2}(B^k_t - B^k_{t_0}) \ \ \text {for} \ \  t\in [t_0,T]. 
$$
In  \eqref{defJN}, $m^N_{{\bf X}_t}$ is the empirical measure of the process  ${\bf X}_t$ given by 
\be\label{m}
m^N_{{\bf X}_t}:= \dfrac{1}{N}\sum_{k=1}^N \delta_{X^k_t},
\ee
where $\delta_x$ is the Dirac mass at $x$.
\vs
The maps $\mathcal F$ and  $\mathcal G$, which are  defined on (suitable subsets of) the set of Borel probability measures on $\R^d$, couple all the particles $X^i$.  Finally, the cost function $L=L(x,\alpha):\R^d\times \R^d\to \R$ is, typically, convex and grows quadratically  in the second variable. 
\vs

The value function for this optimization problem reads 
\be \label{def.VN}
\begin{split}
\mathcal V^N(t_0, \bx_0)&:= \inf_{\alpha \in \mathcal A^N} J^N(t_0,{\bf x}_0, \alpha)\\
&=\inf_{\alpha \in \mathcal A^N} 
 \E\left[ \int_{t_0}^T (\frac{1}{N}\sum_{k=1}^N L(X^k_t,\alpha^k_t) +\mathcal F(m^N_{{\bf X}_t}))dt +
\mathcal G(m^N_{{\bf X}_T}) \right].
\end{split}
\ee

\vs
In a more general framework and under slightly different conditions on the data,  Lacker \cite{La17} proved that the empirical measure $m^N_{{\bf X}^N_t}$ associated to the optimal trajectories of \eqref{def.VN} converges in a suitable sense to the (weak) optimal solution of the mean field control problem (written here in a strong sense) consisting in minimizing the quantity  
\be\label{defJinfty}
J^\infty(t_0, m_0, \alpha) = \E\left[ \int_{t_0}^T L(X_t, \alpha_t)dt + \int_{t_0}^T \mathcal F({\mathcal L}(X_t))dt + \mathcal  G(\mathcal L (X_T))\right],
\ee
where $m_0$ is an initial distribution of the particles at time $t_0$, $\alpha \in \ms A$, the set of admissible controls consisting of square integrable $\R^d-$valued processes adapted to a Brownian motion $B$ and to an initial condition $\bar X_0$, which is  independent of $B$ and of law $m_0$,  and  the process $(X_t)_{t\in [t_0,T]} $ satisfies
$$
X_t= \bar X_0 +\int_{t_0}^t \alpha_sds + \sqrt{2}(B_t-B_{t_0}) \ \ \text{for} \ \ t\in [t_0,T],
$$
and $\ms L(X_t)$ denotes the law of $X_t$. 
\vs

The value function $\ms U$ of the last optimization problem is given (heuristically at this stage) by 
\be\label{takis1}
\begin{split}
\mathcal U(t_0,m_0):&= \inf_{\alpha \in \ms A} J^\infty(t_0, m_0, \alpha) \\
&=\inf_{\alpha\in \ms A} \E[\int_{t_0}^T\big( L(X_t, \alpha_t) + \ms F( \ms L(X_t)\big) dt + \ms G(\ms L(X_T)].
\end{split}
\ee
 
\vs

In addition,  \cite{La17} points out that  there is a propagation of chaos-type property (an easy consequence of Sznitman characterization of propagation of chaos),  if the minimization problem  \eqref{takis1} has a unique weak minimizer. 
Note, however, that such uniqueness is known only when the maps $(x,\alpha)\to L(x,\alpha)$,  $m\to \mathcal F(m)$ and $m\to \mathcal G(m)$ are globally convex. The conclusions  of \cite{La17} were extended to problems with common noise in Djete, Possama\"i and Tan \cite{DPT20}. Several other results on the convergence of MFC problems without diffusion were obtained  in Cavagnari, Lisini, Orrieri and  Savar\'{e} \cite{CaLiOrSa} and  Gangbo, Mayorga and Swiech \cite{GMS}.  A quantitative convergence rate for the value function $\mathcal V^N$ to $\mathcal U$ was given, for problems on a finite state space, in Kolokoltsov \cite{Ko12} and Cecchin \cite{Ce21} and, for problems on the continuous state space, in Baryaktar and Chakraborty \cite{barcha21} under a certain structural dependence of the data on the measure variable, in Germain, Pham and Warin \cite{gpw21} under the assumption that the limit value is smooth, and in Cardaliaguet, Daudin, Jackson and Souganidis \cite{CDJS} under a decoupling assumption on the Hamiltonian. In addition, a propagation of chaos is proved in \cite{barcha21, Ce21, gpw21} assuming, however, that the limit value function is smooth.
\vs
\subsection*{The results} 
In this paper we study nonconvex MFC problems  for which the limit value function is not expected to be globally smooth  and show that for a  large class (a dense and open set) of initial times and measures the value function $\ms U$ is  smooth (Theorem~\ref{thm.mainreguU0}) and the propagation of chaos holds with a rate (Theorem~\ref{thm.main20}).  
\vs
To write the results in this introduction requires considerable notation. Thus we postpone stating the precise theorems to  section~2. 
\vs

We explain, however,  the very general idea of proof. We identify the open and dense in time and space of probabilities set $\mathcal O$ where $\ms U$ is smooth as the set of initial conditions $(t_0,m_0)$  from which starts a unique (in a strong sense) and stable (in a suitable linear sense) minimizer of $J^\infty$.  This step, which is reminiscent of ideas from standard optimal control (see, for instance, the book of Cannarsa and  Sinestrari \cite{CaSiBook}), is similar to what was obtained by Briani and Cardaliaguet  \cite{BrCa} for MFC problems in a different framework, namely,  the state space is the torus and the initial measures have smooth densities.  Here,  the state space is all of  $\R^d$ and the initial conditions are arbitrary probability measures.  To show that $\mathcal U$ is smooth in $\mathcal O$, we adapt ideas used in the construction of a solution to the master equation in mean field games given in Cardaliaguet, Delarue, Lasry and Lions \cite{CDLL} but we argue without the convexity, which translates to monotonicity for general mean field games, assumption used extensively there. 
Then, using  the regularity of $\mathcal U$,  we  derive the propagation of chaos property  for the optimal solution of the particle system when starting from the set $\mathcal O$. 
The key  argument is the fact that the optimal trajectories of  \eqref{takis1} that start in $\mathcal O$ remain there,  while $\mathcal U$ is smooth  and satisfies (almost) the same Hamilton-Jacobi equation as $\mathcal V^N$ in $\mathcal O$.  
\vs 

We finally comment about   possible extensions to problems with common noise { emphasizing once more  that we do not assume any monotonicity/convexity. }The convergence (with algebraic rate)  of $\mathcal V^N$ to $\mathcal U$  for MFC problems with common noise  was established in \cite{CDJS}.
However, the generalization of the results of the present paper to such a setting is far from clear. 
For example, one of the basic tools we use to prove that the set $\mathcal O$ is open and dense is a result of Lions-Malgrange-type, which yields  the uniqueness of the mean field game system characterizing the optimal solution (see system \eqref{MFG} below), but with initial conditions both in $u$ and $m$: see the proof of Lemma \ref{lem.Odense}. 
At this time, we do not know if there is a counterpart of this argument for problems with a common noise, where the mean field system becomes a system of forward-backward stochastic partial differential equations (see \cite{CDLL}). 
\vs

\noindent {\bf Organization of the paper.} We conclude the introduction with the notation we use throughout the paper. In section \ref{sec.1}, we introduce the standing  assumptions,  present the main results and recall some preliminary facts that are needed for the rest of the paper. 
Section \ref{sec.proofThm1} is about  establishing the  smoothness of $\ms U$ (Theorem \ref{thm.mainreguU0}). Section~\ref{sec.proofThm2} is devoted to showing the propagation of chaos property (Theorem~\ref{thm.main20}).

\subsection*{Notations} 
We work on $\R^d$ and write $B_R$ for the open ball centered at the origin with radius $R$ and $I_d$ for the identity matrix.
\vs
We denote by $\mathcal P(\R^d)$ the set of Borel probability measures on $\R^d$. Given $m \in \mathcal P(\R^d)$ and $p \geq 1$, we write $M_p(m)$ for  the $p^{th}-$moment of $m$, that is, $M_p(m) = \int_{\R^d} |x|^p dm$. Then  $\mathcal P_p(\R^d)$ is the set of $m \in \mathcal P(\R^d)$ such that $M_p(\R^d) < \infty$. We endow $\mathcal P_p(\R^d)$ with the Wasserstein metric ${\bf d}_p$, defined by 
\begin{align*}
    {\bf d}^p_p(m, m') : = \inf_{\pi \in \Pi(m,m')} \int_{\R^d} |x - y|^p d\pi(x,y), 
\end{align*}
where $\Pi(m,m')$ is the set of all $\pi \in \mathcal P(\R^d \times \R^d)$ with marginals $m$ and $m'$. For $p=1$, we recall the duality formula 
\begin{align*}
    {\bf d}_1(m,m') = \sup_{\phi} \int_{\R^d} \phi d(m- m'), 
\end{align*}
where the supremum is taken over all 1-Lipschitz maps $\phi : \R^d \to \R$.
\vs 
For $\bx = (x^1,...,x^N) \in (\R^d)^N$, $m_{\bx}^N \in \mathcal P(\R^d)$ is  the empirical measure of $\bx$, that is,  $m_{\bx}^N = \frac{1}{N} \sum_{i = 1}^N \delta_{x^i}$. 
\vs
Given a map $U:\Pk\to \R$, we denote by $\delta U/\delta m$ its flat derivative and by $D_mU$ its Lions-derivative when these derivatives exist, and we use the corresponding notations for second order derivatives. We refer to \cite{CDLL} and  the books of Carmona and Delarue \cite{CaDeBook} for definitions and properties. 
\vs 
We write $C^k_{loc}$ and $C^k$ for the sets of maps on $\R^d$ with continuous and continuous and bounded derivatives up to order $k$. For $r>0$, $r\notin \N$,  we denote by $C^{r,2r}$ the standard parabolic H\"{o}lder spaces 
and by $C^{r,2r}_c$ the subset of functions of $C^{r,2r}$ with a compact support.
\vs
Given a topological vector space,  we write by $E'$ for  its dual space. 
\vs
We often need to  compare  continuous maps defined on different intervals of $[0,T]$. For this,  we simply extend the maps continuously on $[0,T]$ by a constant. For instance, if $t_0\in (0,T]$, $E$ is a topological space and $f:[t_0,T]\to E$ is continuous, we set $f(s)=f(t_0)$ for $s\in [0,t_0]$. 
\vs
{Finally, throughout the proofs $C$ denotes a positive constants, which, unless otherwise said, depends on the data and may change from line to line. }

\subsection*{Acknowlegments} Cardaliaguet was partially supported by the Air Force Office for Scientific Research grant FA9550-18-1-0494 and IMSI, the Institute for Mathematical and Statistical Innovation. Souganidis was partially supported  by the National Science Foundation grant DMS-1900599, the Office for Naval Research grant N000141712095 and the Air Force Office for Scientific Research grant FA9550-18-1-0494. Both  authors would like to thank the IMSI for its hospitality during the Fall 2021 program. 

\section{The assumptions, the main results and some preliminary facts}\label{sec.1}

\subsection*{The standing assumptions} We state our standing assumptions on the maps $H:\R^d\times \R^d\to \R$, $\mathcal F:\Pk\to \R$ and $\mathcal G:\Pk\to \R$, which constitute the data of our problem. We recall that $L:\R^d\times \R^d\to \R$ is the Legendre transform of $H$ with respect to the second variable: 
$$
L(x,a)= \sup_{p\in \R^d} -a\cdot p-H(x,p).
$$

We assume that 
\be\label{convex}
\begin{cases}
\text {$H=H(x,p):\R^d\times \R^d\to \R$ is of class $C^4_{loc}$ and strictly convex with respect to the}\\[1.2mm]
\text{second  variable, that is, for each  $R>0$ there exists}\\[1.2mm]
\text{ $c_R, C_R>0$ such that,   for all $(x,p)\in \R^d\times   \overline{B_R}$,}\\[1.2mm]
\qquad \qquad D^2_{pp}H(x,p)\geq c_R I_d \ \  \text{and} \ \  |D^2_{xx}H(x,p)| + |D^2_{xp}H(x,p)|\leq C_R,
\end{cases}
\ee
 
\be\label{hyp.growth_1}
\begin{cases}
\text{there exists a constant $C>0$ such that, for all $ (x,p)\in \R^d\times \R^d $,}\\[1.2mm] 
 -C+C^{-1}|p|^2\leq H(x,p)\leq C(1+|p|^2)\ \  \text{and} \ \ |D_xH(x,p)| \leq C(|p|+1),
\end{cases}
\ee

\be\label{F}
\text{$\mathcal F:\Pk\to \R$ is of class $C^2$ with $\mathcal F$, $D_m\mathcal F$, $D^2_{ym}\mathcal F$ and $D^2_{mm}\mathcal F$ uniformly bounded,}
\ee
\be\label{G}
\text{$\mathcal G:\Pk\to \R$ is of class $C^4$ with all derivatives up to order $4$ uniformly bounded.}
\ee
\vs

For simplicity, in what follows we put together all the assumptions above in
\be\label{ass.main}
\text{assume that \eqref{convex}, \eqref{hyp.growth_1},  \eqref{F} and \eqref{G} 
hold.}
\ee

The assumptions on the Hamiltonian $H$ are fairly standard, although a little restrictive, and are  used  in \cite{CDJS}  to obtain, independent of $N$, Lipschitz estimates on the value function $\mathcal V^N$; see Lemma \ref{lem.estiVN}. An example  satisfying \eqref{ass.main} is a Hamiltonian of the form $H(x,p)= |p|^2+ V(x)\cdot p$ for some smooth and globally Lipschitz continuous vector field $V:\R^d\to \R^d$. The regularity conditions on $\mathcal F$ and $\mathcal G$ are also important to obtain the estimates of Lemma \ref{lem.estiVN} and to prove the regularity of $\mathcal U$. 

\subsection*{The results}
Given $(t_0, {\bf x})\in [0,T)\times (\R^d)^N$,  $\mathcal V^N(t_0, {\bf x})$ is the value function of the optimal control of the $N-$particle problem given by \eqref{def.VN}.
\vs
We now define in a rigorous way the value function $\mathcal U$ of the MFC, which was informally introduced in \eqref{takis1}. 
For each initial point $(t_0,m_0)\in [0,T)\times\Pk $, we use the set $\ms M(t_0,m_0)$ of controls given by 

\be\label{takis10}
\ms M(t_0,m_0):=\left\{\begin{array}{l}(m,\alpha)\in C^0([t_0,T], \Pk)\times L^0([t_0,T]\times \R^d; \R^d ) \;:  \\[1.5mm]
\int_0^T\int_{\R^d}|\alpha|^2m<\infty \ \ \text{and} \ \  \\[1.5mm]
\partial_t m-\Delta m +{\rm div}(m\alpha) = 0\; \; {\rm in}\; \; (t_0,T)\times \R^d \; \;\text{and} \; \; m(0)=m_0\; {\rm in}\;  \R^d\end{array}\right\},
\ee
with the equation in \eqref{takis10} understood in the sense of distributions.
\vs

Then the value function $\mathcal U(t_0,m_0)$ of the MFC  problem is given by 
\be\label{takis19}
\mathcal U(t_0,m_0):=\underset{(m,\alpha)\in \ms M(t_0,m_0)} \inf\left\{ \int_{t_0}^T ((\int_{\R^d} L(x, \alpha(t,x))m(t,dx))+\mathcal F(m(t)) dt +\mathcal G(m(T))\right\}.
\ee

Our first result is about the regularity of $\mathcal U$ in the set $\mathcal O$ defined by 
\be\label{takis20}
\mathcal O:=\left\{\begin{array}{l} (t_0,m_0)\in [0,T)\times \Pw \: : \mbox{\rm there exists a unique stable minimizer}\\[1.5mm]
\text{ in the definition of $\mathcal U(t_0,m_0)$}
\end{array}\right\}.
\ee
The notion of stability is defined in terms of the linearized MFG system and is introduced in section~\ref{sec.proofThm1}. 
\vs

The first main result is stated next.

\begin{thm}\label{thm.mainreguU0} Assume \eqref{ass.main}. The value function  $\mathcal U$ is globally Lipschitz continuous on $[0,T]\times \Pk$ and of class $C^1$ in the set  $\mathcal O$, which is open and dense in $[0,T]\times \Pw$. Moreover,  $\mathcal U$ is  a classical solution  in $\mathcal O$ of  the master Hamilton-Jacobi equation
\be\label{HJdsO0}
\begin{array}{l}
\ds -\partial_t \mathcal U(t,m)-\int_{\R^d}{\rm div}( D_m\mathcal U(t,m,y))m(dy) +\int_{\R^d} H(y, D_m\mathcal U(t,m,y))m(dy)=\mathcal F(m).
\end{array}
\ee
In addition, for any $(t_0,m_0)$ in $\mathcal O$, there exists $\ep>0$ and a constant $C>0$, which depends on $(t_0,m_0)$ and is such that, for any $t\in [0,T]$, $x,y \in \R^d$ and $m^1,m^2\in \Pw$ with  $|t-t_0|<\ep$, ${\bf d_2}(m_0,m^1)<\ep$ and ${\bf d_2}(m_0,m^2)<\ep$,  
\be\label{takis50}
\left| D_m\mathcal U(t,m^1,x)-D_m\mathcal U(t,m^2,y)\right| \leq C(|x-y|+ {\bf d}_2(m^1,m^2)).
\ee
\end{thm}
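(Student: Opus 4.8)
The plan is to run the classical ``unique stable extremal $\Rightarrow$ smooth value function'' scheme of optimal control (cf.\ \cite{CaSiBook}), transported to the Wasserstein setting in the spirit of \cite{BrCa,CDLL} but \emph{without} any convexity/monotonicity. I would first dispose of the global Lipschitz continuity of $\mathcal U$, which is independent of the set $\mathcal O$: it follows either directly from the continuity-equation representation \eqref{takis19} by a transport-of-competitors argument, or by passing to the limit in the uniform-in-$N$ Lipschitz bounds for $\mathcal V^N$ of Lemma \ref{lem.estiVN} along empirical measures and using density. Next I would recall from the preliminary section that every minimizer $(m,\alpha)$ of \eqref{takis19} has the feedback form $\alpha(t,x)=-D_pH(x,D_xu(t,x))$ for a solution $(u,m)$ of the forward--backward MFG system \eqref{MFG},
\[
\begin{cases}
-\partial_t u-\Delta u+H(x,D_xu)=\tfrac{\delta\mathcal F}{\delta m}(m(t))(x) & \text{in }(t_0,T)\times\R^d,\\[1mm]
\partial_t m-\Delta m-\dv\!\big(m\,D_pH(x,D_xu)\big)=0 & \text{in }(t_0,T)\times\R^d,\\[1mm]
m(t_0)=m_0,\ u(T,x)=\tfrac{\delta\mathcal G}{\delta m}(m(T))(x) & \text{in }\R^d,
\end{cases}
\]
and that, conversely, solutions of \eqref{MFG} need \emph{not} be minimizers --- this is precisely where convexity would otherwise be used. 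Linearizing \eqref{MFG} around a solution $(\bar u,\bar m)$ gives a linear forward--backward system for a pair $(v,\mu)$ with sources built from $D^2_{pp}H,\ D^2_{mm}\mathcal F,\ D^2_{mm}\mathcal G$ along $(\bar u,\bar m)$, prescribed $\mu(t_0)=\mu_0$ and a terminal condition on $v(T)$ governed by $\mu(T)$; the minimizer is \emph{stable} if, for $\mu_0=0$, this system admits only the trivial solution. The uniform regularity estimates of Lemma \ref{lem.estiVN} give compactness of the set of minimizers in the relevant parabolic H\"older spaces, a fact used repeatedly below.

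To prove that $\mathcal O$ is open, note that under stability the linearized system is a compact perturbation, in suitably weighted parabolic H\"older spaces, of a decoupled (backward heat)--(forward Fokker--Planck) pair; hence by Fredholm's alternative it is well posed, with estimates governed by a ``stability constant'' attached to $(t_0,m_0)$. Viewing the residual of \eqref{MFG} as a map between Banach spaces whose differential at $(\bar u,\bar m)$ is this linearized operator, the implicit function theorem yields, for $(t_0',m_0')$ near $(t_0,m_0)$, a unique solution $(u^{t_0',m_0'},m^{t_0',m_0'})$ of \eqref{MFG} near $(\bar u,\bar m)$, depending in a $C^1$ --- indeed Lipschitz, with quantitative bounds --- way on $(t_0',m_0')$ and still stable. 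For openness, let $(t_0,m_0)\in\mathcal O$ with unique stable minimizer $(\bar u,\bar m)$: if $(t_0^n,m_0^n)\to(t_0,m_0)$ and $(u^n,m^n)$ are associated minimizers, compactness gives a subsequential limit solving \eqref{MFG} for $(t_0,m_0)$ which, by continuity of $\mathcal U$ and lower semicontinuity of the cost, is a minimizer, hence equals $(\bar u,\bar m)$. Thus for large $n$ the $(u^n,m^n)$ lie in the neighbourhood on which \eqref{MFG} has the unique implicit-function solution, which must then coincide with the (necessarily existing) minimizer; so it is unique and stable, i.e.\ $(t_0^n,m_0^n)\in\mathcal O$.

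The density of $\mathcal O$ is the main obstacle and is the content of Lemma \ref{lem.Odense}. Its key new ingredient is a Lions--Malgrange-type backward-uniqueness property for \eqref{MFG}: two solutions on $[t_0,T]$ sharing \emph{both} $m(t_0,\cdot)$ and $u(t_0,\cdot)$ (equivalently, $m(t_0,\cdot)$ and $D_xu(t_0,\cdot)$, since the terminal condition removes the residual additive constant) must coincide. Granting this, I would argue as follows. For uniqueness: any minimizer $(u,m)$ of $(t_0,m_0)$ produces $D_xu(t_0,\cdot)$ as an element of the Wasserstein subdifferential of $\mathcal U$ at $(t_0,m_0)$, so at any point where $\mathcal U$ is differentiable all minimizers share $D_xu(t_0,\cdot)$, and the backward-uniqueness property then forces the minimizer to be unique. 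Differentiability of $\mathcal U$ on a dense set is obtained by restricting $\mathcal U$ to finite-dimensional families $(t,\theta)\mapsto\mathcal U\big(t,(\mathrm{id}+\sum_i\theta_i w_i)_\#m_0\big)$, on which Rademacher's theorem applies, combined with a diagonal argument over a countable dense family of vector fields $w_i$ and the compactness of minimizers. For stability at such a point, a further use of the same uniqueness tool shows that, along the optimal flow, the set of ``conjugate times'' where the linearized system develops a nontrivial kernel is discrete, hence avoidable by an arbitrarily small perturbation of the time variable. This is the step that genuinely requires the full regularity assumed in \eqref{ass.main} (notably $\mathcal G\in C^4$) and the one for which I see no counterpart under common noise.

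Finally, I would deduce $C^1$-regularity in $\mathcal O$, the master equation and the estimate \eqref{takis50}. On $\mathcal O$ the optimal pair $(u^{t,m},m^{t,m})$ depends in a $C^1$ way on $(t,m)$ by the implicit-function step; the classical envelope identity $\tfrac{\delta\mathcal U}{\delta m}(t,m)(y)=u^{t,m}(t,y)$ --- the first-order conditions cancelling the contributions of the derivatives of the optimizer --- then gives $D_m\mathcal U(t,m,y)=D_yu^{t,m}(t,y)$, whence $\mathcal U\in C^1(\mathcal O)$, and plugging these formulas together with the $u$-equation of \eqref{MFG} at time $t$, integrated against $m$, into \eqref{HJdsO0} shows that $\mathcal U$ solves the master equation classically in $\mathcal O$ (a by-now standard computation, cf.\ \cite{CDLL}). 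The estimate \eqref{takis50} is the quantitative form of this: on an $\varepsilon$-neighbourhood of $(t_0,m_0)$ the implicit function theorem gives $\|D_xu^{t,m^1}(t,\cdot)-D_xu^{t,m^2}(t,\cdot)\|_\infty\le C\,{\bf d}_2(m^1,m^2)$ with $C=C(t_0,m_0)$, while uniform parabolic regularity makes $y\mapsto D_yu^{t,m}(t,y)$ Lipschitz with a constant uniform over that neighbourhood; writing $D_m\mathcal U(t,m^i,\cdot)=D_yu^{t,m^i}(t,\cdot)$ and adding and subtracting $D_yu^{t,m^1}(t,y)$ yields \eqref{takis50}.
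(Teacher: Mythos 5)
Your high-level strategy matches the paper's, and you correctly identify the two pivotal technical ingredients: the Lions--Malgrange backward-uniqueness property for the MFG system \eqref{MFG} (with data prescribed in both $m(t_0)$ and $Du(t_0)$), and the linearized system's role in defining and propagating stability. Your treatment of the Lipschitz bound, the openness of $\mathcal O$ via quantitative well-posedness of the linearized system (you phrase it as an implicit-function/Fredholm argument; the paper runs an equivalent contradiction argument through Lemma~\ref{lem.estLS-Appen}), and the derivation of \eqref{HJdsO0} and \eqref{takis50} via the envelope identity $D_m\mathcal U(t,m,y)=Du(t,y)$ are all consistent with the paper's route, just packaged differently.

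The substantive divergence --- and the place where your proposal has a genuine gap --- is the density of $\mathcal O$. You propose: (i) Rademacher on finite-dimensional slices $(t,\theta)\mapsto\mathcal U(t,(\mathrm{id}+\sum_i\theta_i w_i)_\#m_0)$ to extract a dense set of differentiability; (ii) a subdifferential argument to force all minimizers at such a point to share $Du(t_0,\cdot)$, hence (by backward uniqueness) to coincide; and (iii) a ``conjugate times are discrete'' claim to secure stability. Step (i) delivers differentiability only along a countable family of finite-dimensional directions and not full Lions-differentiability of $\mathcal U$; and more importantly density in the parameter space $(t,\theta)$ does not transfer to density in $[0,T]\times\mathcal P_2(\R^d)$ without extra work. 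Step (iii) is asserted without argument, and in this infinite-dimensional forward--backward setting it is a nontrivial claim that would need its own proof. The paper (Lemma~\ref{lem.Odense}) sidesteps all of this with a cleaner device: for \emph{any} minimizer $(m,\alpha)$ started from \emph{any} $(t_0,m_0)$ and \emph{any} $t_1\in(t_0,T)$, the point $(t_1,m(t_1))$ already lies in $\mathcal O$. Dynamic programming forces any minimizer started at $(t_1,m(t_1))$ to match $Du(t_1,\cdot)$ with the one obtained by restriction, so Lions--Malgrange gives uniqueness directly; stability is then obtained from the second-order necessary condition (Lemma~\ref{lem.secondordercond}) applied to the control $\bar\beta$ vanishing on $[t_0,t_1)$ and equal to $-H_{pp}(x,Du)Dz$ on $[t_1,T]$, which forces $Dz(t_1,\cdot)=0$ and then $(z,\mu)\equiv(0,0)$ by a second application of backward uniqueness. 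Since $(t_1,m(t_1))\to(t_0,m_0)$ as $t_1\downarrow t_0$, density is immediate, with no Rademacher step, no subdifferential step, and no conjugate-point dichotomy. You should replace your density argument by this one, or else supply proofs of the two missing links (the passage from slice-differentiability to uniqueness of minimizers, and the discreteness of conjugate times).
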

\vs
By a classical solution of \eqref{HJdsO0}, we mean that the derivatives of $\mathcal U$ involved in the equation exist and are continuous.
\vs

Our second main result  is a quantitative propagation of chaos property  about the  optimal trajectories of the underlying  $N-$particle system. 

\begin{thm} \label{thm.main20} Assume \eqref{ass.main}. There exists a constant $\gamma\in (0,1)$ depending only on the  dimension $d$ such that, for every  $(t_0,m_0)\in \mathcal O$ with $M_{d+5}(m_0)<+\infty$, there is $C=C((t_0,m_0))>0$ 
such that, if ${\bf Z}=(Z^k)_{k=1, \dots, N}$ is a sequence of independent random variables with law $m_0$, ${\bf B_.}=(B^k_.)_{k=1, \dots, N}$ is a sequence of independent Brownian motions independent of ${\bf Z}$, 
and ${\bf Y_.}^N=(Y^{N,k})_{k=1, \dots, N}$ is the optimal trajectory for $\mathcal V^N(t_0, (Z^k)_{k=1, \dots, N})$, that is, for each $k=1,\ldots,N$ and $t\in [t_0,T]$, 
\be\label{takis51}
Y^{N,k}_t= Z^k-\int_{t_0}^tH_p(Y^k_s, D\mathcal V^N(s, {\bf Y}^N_s))ds +\sqrt{2}(B^k_t-B^k_{t_0}) , 
\ee
then 
$$
\E\left[ \sup_{t\in [t_0,T]}{\bf d_1}(m^N_{{\bf Y}^N_t}, m(t))\right] \leq CN^{-\gamma}.
$$
\end{thm}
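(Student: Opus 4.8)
The plan is to use the local smoothness of $\mathcal U$ near the optimal flow, furnished by Theorem~\ref{thm.mainreguU0}, as a verification device, and then to transfer the resulting a priori estimate to the particle system by a synchronous coupling with a system of i.i.d. copies of the limiting optimal process. Fix $(t_0,m_0)\in\mathcal O$ and let $(m,\alpha)$ be the unique stable minimizer in \eqref{takis19}; along the optimal flow $\alpha(s,x)=-H_p(x,D_m\mathcal U(s,m(s),x))$, and by the analysis of Section~\ref{sec.proofThm1} one has $(s,m(s))\in\mathcal O$ for all $s\in[t_0,T]$. Since $\mathcal O$ is open and $\{(s,m(s))\}_{s\in[t_0,T]}$ is compact in $[0,T]\times\Pw$, a covering argument combined with \eqref{takis50} produces $r_0>0$ and $C_0>0$ such that the closed tube $\mathcal T:=\{(s,\mu)\,:\,s\in[t_0,T],\ \dw(\mu,m(s))\le 2r_0\}$ is contained in $\mathcal O$, $\mathcal U$ is of class $C^1$ on $\mathcal T$, and $D_m\mathcal U$ is bounded and $C_0$-Lipschitz in $(x,\mu)$ there; in particular $D^2_{mm}\mathcal U$ and $D_yD_m\mathcal U$ exist and are bounded on $\mathcal T$. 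Let $X^k$ solve $dX^k_s=-H_p(X^k_s,D_m\mathcal U(s,m(s),X^k_s))\,ds+\sqrt2\,dB^k_s$ with $X^k_{t_0}=Z^k$: by the tube bounds and \eqref{convex} this drift is globally Lipschitz in $x$ with at most linear growth, so the SDE is well posed, and since $-H_p(\cdot,D_m\mathcal U(s,m(s),\cdot))=\alpha(s,\cdot)$ is the optimal feedback in \eqref{takis19}, the $X^k$ are i.i.d. with $\mathcal L(X^k_s)=m(s)$.

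Two preliminary facts are needed. First, inserting $\mu=m^N_{\bx}$ into the master equation \eqref{HJdsO0} and using the differentiation rules for functions of empirical measures, the smooth map $(s,\bx)\mapsto\mathcal U(s,m^N_{\bx})$ solves, up to an error that is $O(1/N)$ whenever $m^N_{\bx}\in\mathcal T$ (the error being $N^{-1}$ times an average of $\tr D^2_{mm}\mathcal U$, bounded on $\mathcal T$), the Hamilton--Jacobi equation satisfied by $\mathcal V^N$. Second, using $\alpha$ as a decentralized feedback in the $N$-particle problem produces exactly the trajectories $X^k$, so $\E[\mathcal V^N(t_0,{\bf Z})]\le\E[J^N(t_0,{\bf Z},\alpha)]$, and since $\mathcal L(X^k_s)=m(s)$ the right-hand side equals $\mathcal U(t_0,m_0)$ up to an error controlled by $\sup_s\E[\dk(m^N_{{\bf X}_s},m(s))]$, coming from $\mathcal F$ and $\mathcal G$; a uniform-in-time Fournier--Guillin estimate (licit since $M_{d+5}(m_0)<\infty$ and $X^k$ has controlled moments) yields $\E[\mathcal V^N(t_0,{\bf Z})]\le\mathcal U(t_0,m_0)+CN^{-\gamma}$ for a dimensional exponent $\gamma>0$. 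I will also invoke the algebraic convergence rate of \cite{CDJS} --- applicable because $H$ does not depend on the measure (the decoupling assumption there) --- in the deterministic form $|\mathcal V^N(s,\bx)-\mathcal U(s,m^N_{\bx})|\le C(1+M_{q_0}(m^N_{\bx}))N^{-\beta}$.

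The core step is the verification estimate along the true optimal trajectory ${\bf Y}^N$. Set $\tau:=\inf\{s\ge t_0:\dw(m^N_{{\bf Y}^N_s},m(s))\ge r_0\}\wedge T$ and write $q^k_s:=D_m\mathcal U(s,m^N_{{\bf Y}^N_s},Y^{N,k}_s)$, $p^k_s:=ND_{x^k}\mathcal V^N(s,{\bf Y}^N_s)$ (the costate appearing in \eqref{takis51}) and $b^k_s:=-H_p(Y^{N,k}_s,p^k_s)$. Applying It\^o's formula to $s\mapsto\mathcal U(s,m^N_{{\bf Y}^N_s})$ on $[t_0,\tau]$ (legitimate, since $\mathcal U(s,m^N_{\cdot})$ is $C^1$ in $s$ with locally Lipschitz spatial gradient on the tube and the process is nondegenerate), replacing $\partial_s\mathcal U+\sum_k\Delta_{x^k}\mathcal U$ via \eqref{HJdsO0}, and using the Fenchel--Young inequality with the gap $H(y,q)+q\cdot b+L(y,b)\ge c\,|H_p(y,q)-H_p(y,p)|^2\ge c'|q-p|^2$ provided by the strong convexity of $H$ in $p$ (on the bounded set where $q^k_s,p^k_s$ range), one obtains
\[
\mathcal U(\tau,m^N_{{\bf Y}^N_\tau})-\mathcal U(t_0,m^N_{\bf Z})\ \ge\ -\int_{t_0}^{\tau}\!\Big(\tfrac1N\sum_{k}L(Y^{N,k}_s,b^k_s)+\mathcal F(m^N_{{\bf Y}^N_s})\Big)ds\ +\ c\int_{t_0}^{\tau}\!\tfrac1N\sum_{k}|q^k_s-p^k_s|^2\,ds\ -\ \tfrac CN\ +\ (\text{martingale}).
\]
Taking expectations, identifying the integral of the running cost with $\E[\mathcal V^N(t_0,{\bf Z})-\mathcal V^N(\tau,{\bf Y}^N_\tau)]$ by the dynamic programming principle for $\mathcal V^N$, and bounding $\E[\mathcal V^N(t_0,{\bf Z})]-\E[\mathcal U(t_0,m^N_{\bf Z})]$ (by the upper bound above, the global Lipschitz continuity of $\mathcal U$ and $\E[\dk(m^N_{\bf Z},m_0)]\le CN^{-\gamma}$) and $\E|\mathcal U(\tau,m^N_{{\bf Y}^N_\tau})-\mathcal V^N(\tau,{\bf Y}^N_\tau)|$ (by \cite{CDJS}, the moment of $m^N_{{\bf Y}^N_\tau}$ being controlled uniformly in $N$ via Lemma~\ref{lem.estiVN} and $M_{d+5}(m_0)<\infty$), one arrives at the key mismatch bound $\E\int_{t_0}^{\tau}\tfrac1N\sum_k|q^k_s-p^k_s|^2\,ds\le CN^{-\gamma}$.

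Finally I would convert this into trajectory closeness and remove the stopping time. Since the Brownian parts cancel, $Y^{N,k}_s-X^k_s$ is, for $s\le\tau$, a time integral whose integrand splits into $-H_p(Y^{N,k},p^k)+H_p(Y^{N,k},q^k)$ (bounded by $C|q^k-p^k|$), $H_p(Y^{N,k},q^k)-H_p(Y^{N,k},D_m\mathcal U(s,m(s),Y^{N,k}))$ (bounded by $C\dw(m^N_{{\bf Y}^N_s},m(s))\le C(\tfrac1N\sum_j|Y^{N,j}_s-X^j_s|^2)^{1/2}+C\dw(m^N_{{\bf X}_s},m(s))$ by the tube Lipschitz bound on $D_m\mathcal U$) and $-H_p(Y^{N,k},D_m\mathcal U(s,m(s),\cdot))+H_p(X^k,D_m\mathcal U(s,m(s),\cdot))$ (bounded by $C|Y^{N,k}_s-X^k_s|$). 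Squaring, averaging in $k$, taking expectations, applying Gronwall, and combining the mismatch bound with the uniform-in-time $\dw^2$ empirical-measure estimate for the $X^k$ gives $\sup_s\E[\tfrac1N\sum_k|Y^{N,k}_{s\wedge\tau}-X^k_{s\wedge\tau}|^2]\le CN^{-\gamma}$, hence --- the integrand not depending on the free time variable --- also with the supremum inside the expectation. Feeding this back, on $\{\tau<T\}$ one has (after discarding the event $\{\dw(m^N_{\bf Z},m_0)\ge r_0\}$, of probability $O(N^{-\gamma})$) $r_0^2\le 2\sup_s\tfrac1N\sum_k|Y^{N,k}_{s\wedge\tau}-X^k_{s\wedge\tau}|^2+2\sup_s\dw(m^N_{{\bf X}_s},m(s))^2$, so $\Prob(\tau<T)\le CN^{-\gamma}$ by Markov's inequality. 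The conclusion then follows from $\dk(m^N_{{\bf Y}^N_s},m(s))\le\tfrac1N\sum_k|Y^{N,k}_s-X^k_s|+\dk(m^N_{{\bf X}_s},m(s))$, splitting over $\{\tau=T\}$ and $\{\tau<T\}$: on the first event both terms are controlled by the coupling bound and the uniform-in-time $\dk$ estimate for the $X^k$, and on the second one uses $\dk(m^N_{{\bf Y}^N_s},m(s))\le M_1(m^N_{{\bf Y}^N_s})+M_1(m(s))$ (of bounded second moment, uniformly in $N$), Cauchy--Schwarz and $\Prob(\tau<T)\le CN^{-\gamma}$, giving $\E[\sup_{s\in[t_0,T]}\dk(m^N_{{\bf Y}^N_s},m(s))]\le CN^{-\gamma}$ with $\gamma=\gamma(d)\in(0,1)$ the least of the exponents involved. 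The main obstacle is precisely the circularity of this scheme: the verification estimate and the coupling are available only while $m^N_{{\bf Y}^N_s}$ stays in the tube where Theorem~\ref{thm.mainreguU0} yields smoothness of $\mathcal U$, whereas controlling the exit time $\tau$ is exactly what those estimates should provide; it is broken by invoking the tube-free rate of \cite{CDJS} at the random time $\tau$ --- an input that does not see the particle trajectory --- after which $\Prob(\tau<T)$ is estimated from the coupling. The remaining bookkeeping (moments of the stopped processes, the various uniform-in-time empirical-measure estimates, and tracking that every constant used is the one supplied on the compact tube) is routine but lengthy.
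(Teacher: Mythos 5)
Your proposal follows essentially the same architecture as the paper's proof: build a Wasserstein tube around the optimal flow on which $\mathcal U$ is $C^1$ with Lipschitz $D_m\mathcal U$; apply It\^o to $s\mapsto\mathcal U(s,m^N_{{\bf Y}^N_s})$ up to the exit time of the tube, use the master equation and the strict convexity of $H$ to control $\E\int\frac1N\sum_k|H_p(Y^{N,k},ND_{x^k}\mathcal U^N)-H_p(Y^{N,k},ND_{x^k}\mathcal V^N)|^2$ by $\|\mathcal U^N-\mathcal V^N\|_\infty+N^{-1}$; couple ${\bf Y}^N$ to an i.i.d.\ reference system, run Gronwall, and close the argument by bounding the exit probability via Markov and invoking the algebraic rate of \cite{CDJS} together with a uniform-in-time empirical-measure estimate. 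The only structural deviation is bookkeeping: the paper inserts an intermediate system ${\bf X}^N$ with the \emph{empirical-measure-dependent} feedback $-H_p(x,D_m\mathcal U(s,m^N_{{\bf X}^N_s},x))$ (so its Gronwall comparison with ${\bf Y}^N$ has only two error terms, and the propagation-of-chaos estimate against the genuinely i.i.d.\ particles is isolated in a separate lemma), whereas you compare ${\bf Y}^N$ directly to the i.i.d.\ particles driven by $m(s)$, absorbing the empirical-vs.-$m(s)$ discrepancy into a third Gronwall term; you also work in $L^2$ where the paper works in $L^1$. Both variants are correct and yield the same kind of $N^{-\gamma}$ bound; you also correctly identify, and break, the circularity between the tube restriction and the exit-time control, exactly as the paper does.
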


\subsection*{Some preliminary facts}\label{subsec.preliminaries} We recall here some well known facts about MFC that we use in the paper.  
\vs
We begin with some regularity properties of the underlying backward-forward MFG system.  
Fix  $(t_0, m_0)\in [0,T]\times \Pw$. We recall (see, for example,  \cite{LL06cr2, LLJapan} for the original statement or \cite{daudin21}, in a framework closer to our setting) that there exists at least one  minimizer for $\mathcal U(t_0,m_0)$ and that, if $(m, \alpha)\in \ms M(t_0,m_0)$ is a minimizer, then there exists a multiplier $u:[t_0,T]\times \R^d\to \R$ such that $\alpha= -H_p(x,Du)$ and  the pair $(u,m)$ solves the MFG-system 
\be\label{MFG}
\left\{\begin{array}{l}
\ds -\partial_t u -\Delta u +H(x,Du)= F(x,m(t)) \ \  {\rm in} \ \ (t_0,T)\times \R^d,\\[1.2mm]
\ds \partial_t m -\Delta m -{\rm div}(H_p(x,Du)m)=0 \ \  {\rm in} \ \ (t_0,T)\times \R^d, \\[1.2mm]
\ds m(t_0)=m_0, \; u(T, x)=G(x,m(T)) \ \  {\rm in} \ \  \R^d,
\end{array}\right.
\ee
where 
$$
F(x,m)=\frac{\delta \mathcal F}{\delta m}(m,x) \ \ \text{and} \ \ G(x,m)=\frac{\delta \mathcal G}{\delta m}(m,x).
$$
Note that, in view of the assumed strict convexity of $H$, given $(m,\alpha)$,   the relation $\alpha= -H_p(x,Du)$ defines uniquely $Du$.

\begin{lem} \label{lem.reguum} Assume \eqref{ass.main} and let $(u,m)$ be a the solution of \eqref{MFG}.  Then, for any $\delta\in (0,1)$,  there exists $C>0$, which is independent of $(t_0,m_0)$, such that 
\be\label{reguum}
\|u\|_{C^{2+\delta/2,4+\delta}} +\sup_{t\neq t'} \frac{{\bf d_2}(m(t), m(t'))}{|t-t'|^{1/2}} \leq C\ \ \  {\rm and }\ \ \ \sup_{t\in [t_0,T]} \int_{\R^d}|x|^2 m(t,dx) \leq C
\int_{\R^d}|x|^2 m_0(dx).
\ee
\end{lem}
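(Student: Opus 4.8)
The proof hinges on a single nontrivial estimate — a Lipschitz bound for $u$ uniform in $(t_0,m_0)$ — after which the regularity of $u$ follows from standard parabolic bootstrapping and the estimates for $m$ from a stochastic representation of the Fokker--Planck equation.

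\emph{Step 1 (energy and Lipschitz bounds).} Using the competitor $(\bar m,0)\in\mathcal M(t_0,m_0)$, with $\bar m$ the heat flow of $m_0$, in \eqref{takis19}, together with $L(x,0)=-\inf_pH(x,p)\le C$ (the lower bound in \eqref{hyp.growth_1}) and $\|\mathcal F\|_\infty+\|\mathcal G\|_\infty\le C$, gives $\mathcal U(t_0,m_0)\le C$ uniformly. The upper bound in \eqref{hyp.growth_1} yields the coercivity $L(x,a)\ge c|a|^2-C$ and, with \eqref{convex}, the estimate $|D_xL(x,a)|=|D_xH(x,p^\ast(x,a))|\le C(1+|a|)$, the maximiser $p^\ast(x,a)$ in $L(x,a)=\sup_p(-a\cdot p-H(x,p))$ satisfying $|p^\ast(x,a)|\le C(1+|a|)$ uniformly in $x$. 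By standard verification for the first equation of \eqref{MFG}, $u(t,x)=\inf_\beta\E\big[\int_t^T(L(X_s,\beta_s)+F(X_s,m(s)))\,ds+G(X_T,m(T))\big]$ over $dX_s=\beta_s\,ds+\sqrt2\,dB_s$, $X_t=x$; taking $\beta\equiv0$ and using $L\ge-C$ gives $\|u\|_\infty\le C$, and any $\varepsilon$-optimal $\beta$ obeys $\E\int_t^T|\beta_s|^2\,ds\le C$. Comparing, for fixed $x,x'$, the trajectory from $x$ with its shift by $x'-x$ under a common $\varepsilon$-optimal control, and using the bounds on $D_xL$, $D_xF=D_m\mathcal F$, $D_xG=D_m\mathcal G$ and on $\E\int|\beta_s|^2\,ds$, one obtains $|u(t,x)-u(t,x')|\le C|x-x'|$, i.e. $\|Du\|_\infty\le C$, uniformly in $(t_0,m_0)$. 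This is the argument of \cite{CDJS} — cf. Lemma~\ref{lem.estiVN} — transcribed to $u$; see also \cite{daudin21}.

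\emph{Step 2 (regularity and moments of $m$).} A convex $\psi$ on $\R^d$ with $c|p|^2-C\le\psi(p)\le C(1+|p|^2)$ is Lipschitz on each $\overline{B_R}$ with constant depending only on $c,C,R$; applied to $\psi=H(x,\cdot)$ with $R=\|Du\|_\infty$ this gives $\|H_p(\cdot,Du)\|_\infty\le C$ uniformly, so the drift of the Fokker--Planck equation in \eqref{MFG} is bounded. By the superposition principle there is a weak solution $(Y_s)_{s\in[t_0,T]}$ of $Y_s=Y_{t_0}-\int_{t_0}^sH_p(Y_r,Du(r,Y_r))\,dr+\sqrt2(B_s-B_{t_0})$ with $\mathcal L(Y_{t_0})=m_0$ and $\mathcal L(Y_s)=m(s)$. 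Then, for $t<t'$, since $-H_p(\cdot,Du)=\alpha$ $m$-a.e. and $\int_{t_0}^T\!\!\int_{\R^d}|\alpha|^2m\le C$ by Step 1, $\mathbf{d}_2^2(m(t),m(t'))\le\E|Y_t-Y_{t'}|^2\le 2|t'-t|\int_t^{t'}\!\!\int_{\R^d}|H_p(x,Du)|^2m+4d|t'-t|\le C|t'-t|$, which is the claimed $1/2$-Hölder bound with a universal constant; the same decomposition yields $\E|Y_t|^2\le C(1+M_2(m_0))$, hence the moment bound. Note that deriving the $1/2$-Hölder bound from a metric-speed/Benamou--Brenier argument would instead produce a constant involving $\int_{t_0}^T\!\int_{\R^d}|\nabla\log m|^2m$, which is finite but not uniform in $(t_0,m_0)$; the SDE representation is what keeps the constant universal.

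\emph{Step 3 (regularity of $u$).} Since $|Du|\le C_0$, the term $H(x,Du)$ only sees $H$ on $\R^d\times\overline{B_{C_0}}$, where by \eqref{convex} and Step 2 it is $C^4$, uniformly convex, with bounded $D^2_{xx}H$, $D^2_{xp}H$ and $D_pH$; modifying $H$ in $p$ off this set we may take $H$ itself to have bounded derivatives up to order $4$. Then $\Phi:=F(\cdot,m(\cdot))-H(\cdot,Du)\in L^\infty$, so interior parabolic $L^p$ estimates and Sobolev embedding give $Du\in C^{\beta/2,\beta}_{loc}$ for all $\beta<1$. By \eqref{F}, $(t,x)\mapsto F(x,m(t))$ is bounded, of class $C^2$ in $x$ with uniformly bounded derivatives, and, since $F(x,\cdot)$ is $\mathbf{d}_1$-Lipschitz, $1/2$-Hölder in $t$ by Step 2; by \eqref{G}, $G(\cdot,m(T))\in C^4$ with uniformly bounded derivatives. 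Iterating the Schauder estimates for $-\partial_tu-\Delta u+H(x,Du)=F(x,m(t))$, $u(T,\cdot)=G(\cdot,m(T))$ — each round improving the Hölder exponent of $Du$, hence of the coefficients and the right-hand side — upgrades $u$ to $C^{2+\delta/2,4+\delta}$ for every $\delta\in(0,1)$, the iteration terminating at this order because of the finite, $C^2$ and $C^4$, regularity of $\mathcal F$ and $\mathcal G$; translating in $x$ over unit cubes (the coefficient bounds being uniform in $x$) promotes the interior bounds to the global estimate, with constant depending only on the data. The roughness of $m_0$ does not affect $u$: it enters $u$ only through $F(\cdot,m(t))$ and $G(\cdot,m(T))$, whose spatial regularity comes from \eqref{F}--\eqref{G} alone and whose time-regularity is the $1/2$-Hölder continuity of $m$ from Step 2.

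\emph{Main obstacle.} The delicate point is the uniform Lipschitz bound of Step 1, which is where \eqref{convex}--\eqref{hyp.growth_1} are genuinely used and where the control-representation argument must absorb the quadratic growth of the Lagrangian while keeping every constant independent of $(t_0,m_0)$; everything downstream is classical parabolic regularity for $u$ and an elementary stochastic computation for $m$.
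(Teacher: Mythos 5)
Your proposal follows the same broad route as the paper: quote the uniform $\|Du\|_\infty$ bound from the control-representation argument of \cite{CDJS}, deduce the $\mathbf d_2$-H\"older and moment estimates for $m$, then bootstrap parabolic regularity for $u$. Where you diverge is the mechanism for making the $u$-estimates \emph{global} in $\R^d$: the paper observes that \eqref{F}--\eqref{G} force $F(\cdot,m(t))$ and $G(\cdot,m(T))$ to be uniformly Lipschitz, and then applies the maximum principle to spatial translates of $u$ (and of $u_{x_i}$) to get global Lipschitz control directly; you instead truncate $H$ in $p$ outside $\overline{B_{C_0}}$ and patch interior Schauder estimates over a covering by unit cubes. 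Both are legitimate, but the maximum-principle route avoids any localization. Your explicit stochastic-representation derivation of the $m$-estimates fills in a step the paper dismisses as ``immediate'', and your remark that a metric-derivative/Benamou--Brenier bound would instead produce a constant involving the Fisher information $\int|\nabla\log m|^2m$ (hence not uniform for rough $m_0$) is a genuinely useful observation. One caveat to quote more carefully, which in fact both proofs leave implicit: modifying $H$ in $p$ controls only the $p$-derivatives, whereas \eqref{convex} gives uniform-in-$x$ bounds only on $D^2_{xx}H$ and $D^2_{xp}H$, and \eqref{F} only asserts boundedness (not H\"older continuity) of $D^2_{ym}\mathcal F$; so the Schauder iteration up to $C^{2+\delta/2,4+\delta}$ needs either a little more assumed regularity on $H$ in $x$ and on $\mathcal F$, or a statement restricted to the orders these hypotheses actually reach.
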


\begin{rmk} Note that, under our standing assumptions, we do not have, in general,  uniqueness of the solution to \eqref{MFG}.  Indeed, the problem defining $\mathcal U(t_0,m_0)$ may have several minimizers and/or  solutions of  \eqref{MFG} may not necessarily be associated with a minimizer of $\mathcal U(t_0,m_0)$. 
\end{rmk}

\begin{proof} The estimates of $m$ and  the local regularity of $u$ are standard.  Indeed, the uniform bound on $Du$  follows as in the proof of Lemma 3.3 in \cite{CDJS}, and then  the estimate on $m$ are immediate. Moreover,  the local regularity of $u$ is a consequence of the classical parabolic regularity theory. 
\vs
The only point is to explain why this regularity holds globally in space. For this, we first note that the assumptions  on $\mathcal F$ and $\mathcal G$ yield a $C>0$ such that, for all $x,y\in \R^d$, 
$$
\sup_{t\in [t_0,T]} \left| F(x,m(t))-F(y,m(t))\right| + \left| G(x,m(t))-G(y,m(t)\right|\leq C|x-y|.
$$
It then follows from the maximum principle that $u$ is, uniformly in $(t_0,m_0)$ and in $t$, Lipschitz continuous in the space variable. The same argument applied to the equation satisfied by $u_{x_i}$ for each $i=1,\ldots,d$, implies that 
$Du$ is also uniformly Lipschitz continuous in the space variable. 
\vs
The general conclusion can be established similarly, using the maximum principle for the global estimates and the parabolic regularity for the local one. 
\end{proof}

In view  of the uniform estimates in \eqref{reguum}, we have the following stability for minimizers in \eqref{takis19} when they are unique.

\begin{lem}\label{lem.stabilo} Assume \eqref{ass.main}, fix  $(t_0,m_0)\in [0,T)\times \Pw$, suppose that $\mathcal U(t_0,m_0)$ has a unique minimizer $(m, \alpha)$, and let $u$ be the associated multiplier. If $(t_0^n, m_0^n)$ converges to $(t_0,m_0)$ and if $(m^n,\alpha^n)$ is a minimizer for $\mathcal U(t_0^n,m_0^n)$ with associated multiplier $u^n$, then $u^n$, $Du^n$ and $D^2u^n$ converge respectively to $u$, $Du$ and $D^2u$  in $C^{\delta/2,\delta}$. In addition, if $t_0^n=t_0$ for all $n$, the convergence of $(u^n)$ holds in $C^{(2+\delta)/2,2+\delta}$.
\end{lem}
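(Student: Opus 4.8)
The plan is to argue by a standard compactness-and-uniqueness scheme, using the uniform estimates of Lemma~\ref{lem.reguum} as the source of compactness. First I would record that, since $(t_0^n,m_0^n)\to(t_0,m_0)$, the second moments $M_2(m_0^n)$ are bounded, so by \eqref{reguum} the associated solutions $(u^n,m^n)$ of the MFG system \eqref{MFG} (with initial time $t_0^n$, initial datum $m_0^n$) satisfy, uniformly in $n$, the bounds $\|u^n\|_{C^{2+\delta/2,4+\delta}}\le C$, the $1/2$-H\"older-in-time estimate on $m^n$ in $\dw$, and $\sup_t M_2(m^n(t))\le C\,M_2(m_0^n)\le C'$. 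By the Arzel\`a--Ascoli theorem (interpolating the uniform $C^{2+\delta/2,4+\delta}$ bound down to, say, $C^{(2+\delta')/2,2+\delta'}$ on compact subsets of space-time for $\delta'<\delta$, and using that the $m^n$ are tight and equicontinuous in $\dw$), every subsequence of $(u^n,m^n)$ has a further subsequence converging to some limit $(\bar u,\bar m)$: $u^n,Du^n,D^2u^n\to\bar u,D\bar u,D^2\bar u$ in $C^{\delta/2,\delta}_{loc}$ and $m^n\to\bar m$ in $C^0([0,T],\Pw)$ (here using the extension-by-constant convention from the Notations to compare functions defined on $[t_0^n,T]$ versus $[t_0,T]$, and noting $t_0^n\to t_0$).

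The next step is to pass to the limit in the system. The terminal and initial conditions pass because $m^n(T)\to\bar m(T)$ in $\dw$ and $\mathcal G$ is $C^4$, so $G(\cdot,m^n(T))\to G(\cdot,\bar m(T))$ locally uniformly with derivatives, and similarly $F(\cdot,m^n(t))\to F(\cdot,\bar m(t))$; the equation for $u^n$ passes in the classical sense thanks to the $C^{2+\delta/2}$-in-time, $C^{4+\delta}$-in-space convergence and the $C^4_{loc}$ regularity of $H$, while the Fokker--Planck equation passes in the sense of distributions because $H_p(\cdot,Du^n)m^n\to H_p(\cdot,D\bar u)\bar m$ weakly-$\ast$. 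Thus $(\bar u,\bar m)$ solves \eqref{MFG} with data $(t_0,m_0)$, and one checks in the usual way (via the verification/representation of minimizers recalled before Lemma~\ref{lem.reguum}) that $\bar m$, together with the feedback $\bar\alpha=-H_p(\cdot,D\bar u)$, is a minimizer for $\mathcal U(t_0,m_0)$: indeed $J^\infty$ is lower semicontinuous along $(m^n,\alpha^n)$ and the cost values $\mathcal U(t_0^n,m_0^n)\to\mathcal U(t_0,m_0)$ by the Lipschitz continuity of $\mathcal U$ (Theorem~\ref{thm.mainreguU0}, whose Lipschitz part is available independently), so the limit is optimal. By the assumed \emph{uniqueness} of the minimizer for $\mathcal U(t_0,m_0)$, we get $\bar m=m$ and, since $\alpha=-H_p(\cdot,Du)$ determines $Du$ uniquely by strict convexity of $H$ and the equation then determines $u$, also $\bar u=u$. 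As the limit is the same for every subsequence, the whole sequence converges, which is the assertion.

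For the final claim, when $t_0^n=t_0$ for all $n$, there is no time-shift and the solutions $u^n$ all live on $[t_0,T]$; the uniform bound $\|u^n\|_{C^{2+\delta/2,4+\delta}}\le C$ then upgrades the convergence of $(u^n)$ from $C^{\delta/2,\delta}_{loc}$ to $C^{(2+\delta)/2,2+\delta}$ by interpolation between the strong uniform bound in the higher norm and the convergence in the lower norm (the loss of exponent being irrelevant since $\delta<2+\delta$ and we may have started from a slightly larger $\delta$). The main obstacle I anticipate is the bookkeeping around the moving initial time $t_0^n$ and the fact that the a priori estimates of Lemma~\ref{lem.reguum} are only $\dw$-based: one must argue carefully that tightness plus $\dw$-equicontinuity gives convergence in $C^0([0,T],\Pw)$ and not merely in a weaker topology, and that the localization in the parabolic H\"older estimates is uniform enough that the limit equation for $u$ holds in the classical sense everywhere; the passage to the limit in the nonlinear terms $H(\cdot,Du^n)$ and $H_p(\cdot,Du^n)m^n$ is then routine given the $C^{4+\delta}$-in-space convergence of $u^n$.
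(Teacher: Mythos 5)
Your compactness-and-uniqueness scheme is the right starting point and matches the first half of the paper's proof (uniform bounds from Lemma~\ref{lem.reguum}, Arzel\`a--Ascoli, passage to the limit in the MFG system, and identification of the limit with the unique minimizer via Lipschitz continuity of $\mathcal U$). However, there is a genuine gap in how you pass from \emph{local} to \emph{global} convergence, and this gap affects both claims.

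Arzel\`a--Ascoli on the unbounded domain $\R^d$ only delivers convergence of $u^n,Du^n,D^2u^n$ in $C^{\delta/2,\delta}_{loc}$, whereas the lemma asserts convergence in the global parabolic H\"older space $C^{\delta/2,\delta}$ (in the paper's notation $C^{r,2r}$ without the subscript ``loc'' is the space of bounded H\"older functions). The interpolation argument you propose for the final claim cannot close this gap: having $\|u^n-u\|_{C^{2+\delta/2,4+\delta}}\le C$ uniformly together with $u^n\to u$ locally does \emph{not} imply global convergence in any intermediate H\"older norm (take $u^n(x)=\phi(x-ne_1)$ with $\phi$ a fixed bump: this is bounded in every $C^k$, converges to $0$ in $C^\infty_{loc}$, yet $\|u^n\|_{C^{r,2r}}$ does not go to $0$). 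Interpolation trades regularity for smallness only where you already have smallness, which here is only on compact sets.

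The mechanism the paper uses, and which you need, is linearization plus the maximum principle. Writing $z^n=u^n-u$, one finds that $z^n$ solves a linear uniformly parabolic equation
$$-\partial_t z^n-\Delta z^n+V^n\cdot Dz^n = F(\cdot,m^n(\cdot))-F(\cdot,m(\cdot)),\qquad z^n(T,\cdot)= G(\cdot,m^n(T))-G(\cdot,m(T)),$$
with $V^n$ globally bounded. The crucial point is that the source terms converge to $0$ \emph{uniformly in $x$}: since $\delta\mathcal F/\delta m$ and $\delta\mathcal G/\delta m$ are bounded and $m^n(t)\to m(t)$ in $\dk$ uniformly in $t$, one has $\sup_{t,x}|F(x,m^n(t))-F(x,m(t))|\to0$ and likewise for $G$. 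The maximum principle then gives $\|z^n\|_\infty\to0$ globally, the same argument applied to $\partial_{x_i}z^n$ gives global decay of $Dz^n$, and global parabolic Schauder estimates (exactly as in the proof of Lemma~\ref{lem.reguum}) upgrade this to $\|z^n\|_{C^{(2+\delta)/2,2+\delta}}\to0$ when $t_0^n=t_0$. The $C^{\delta/2,\delta}$ convergence in the general case (where $t_0^n\ne t_0$) follows after the reduction to $t_0^n=t_0$, which uses the uniform $(2+\delta)/2$ time-H\"older regularity of $u^n$ from \eqref{reguum} and costs exactly the regularity you would expect from extending by a constant across $t_0^n$. You should replace your interpolation step by this linearization-plus-maximum-principle argument.
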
 

\begin{proof} It easily follows from the regularity of $u$  (see \eqref{reguum}) that, without loss of generality, we can assume that $t^n_0=t_0$. Moreover, again in view of \eqref{reguum} and the continuity of $\mathcal U$ (see Lemma~\ref{lem.estiVN} below), 
the minimizer $(m^n,\alpha^n)$ converge along subsequences in $C^0([t_0,T]\times \Pk)\times C^{1,2}_{loc}([t_0,T]\times \R^d)$ to  minimizers for $\mathcal U(t_0,m_0)$. 
Since the latter is assumed to have a unique minimizer $(m,\alpha)$, the convergence holds along the whole sequence. Arguing as in Lemma \ref{lem.reguum}, we   can also check that the convergence of the $u^n$'s holds in $C^{(2+\delta)/2,2+\delta}$, because $z^n=u^n-u$ solves a linearized  equation of the form 
$$
 -\partial_t z^n-\Delta z^n+V^n\cdot Dz^n= F(x,m^n(t))-F(x,m(t)) \ \ \ z^n(T,x)= G(x,m^n(T))-G(x,m(T)),
 $$
 with  $V^n(t,x)= \int_0^1 H_p((1-s)Du^n+sDu,x)ds$,  and where, in view of the regularity of $\mathcal F$ and $\mathcal G$  and the convergence of $m^n$ to $m$,  $F(\cdot,m^n(\cdot))-F(\cdot,m(\cdot))$ and $G(\cdot,m^n(T))-G(\cdot,m(T))$ converge as $n\to \infty$ to $0$.
\end{proof}

The following second-order optimality condition is used several times in the proofs of the main results.

\begin{lem}\label{lem.secondordercond} Assume \eqref{ass.main}, fix  $(t_0,m_0)\in [0,T)\times \Pw$ and let $(m, \alpha)$ be a minimizer for $\mathcal U(t_0,m_0)$. Fix  $\beta\in C^0([t_0,T]\times \R^d;\R^d)$ or $\beta \in L^\infty([t_0,T]\times \R^d; \R^d)$ with $\beta=0$ in a neighborhood of $t_0$ and let  $\rho\in C^0([t_0,T], (C^{2+\delta}(\R^d))')$ be the solution in the sense of distributions to
\be\label{eq.forsecondorder}
\left\{\begin{array}{l}
\partial_t \rho-\Delta \rho +{\rm div} (\rho \alpha)+ {\rm div}(m\beta)= 0 \ \  {\rm in} \ \  (t_0,T)\times \R^d,\\[1.2mm]
\rho(t_0)= 0 \ \  {\rm in} \ \  \R^d.
\end{array}\right. 
\ee
Then
\be\label{ineq.SOC}
\begin{split}
& \int_{t_0}^T \Bigl( \int_{\R^d} L_{\alpha,\alpha}(x,\alpha(t,x)) \beta(t,x)\cdot   \beta(t,x)m(t,dx) +\lg  \frac{\delta F}{\delta m}(\cdot,m(t),\cdot), \rho(t)\otimes \rho(t)\rg\Bigr)dt\\
& \qquad + \lg \frac{\delta G}{\delta m}(\cdot,m(T),\cdot), \rho(T,\cdot)\otimes \rho(T,\cdot)\rg\geq 0.
\end{split}
\ee
\end{lem}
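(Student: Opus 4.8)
The plan is the classical second-variation argument: perturb the optimal control, expand the cost to second order along the perturbation, and then use the multiplier $u$ of \eqref{MFG} to cancel all the first-order remainders, which is precisely what produces \eqref{ineq.SOC}. For $h$ close to $0$ in $\R$, set $\alpha^h:=\alpha+h\beta$ and let $m^h\in C^0([t_0,T],\Pk)$ be the solution of $\partial_t m^h-\Delta m^h+\dv(m^h\alpha^h)=0$ in $(t_0,T)\times\R^d$ with $m^h(t_0)=m_0$. Since $\alpha=-H_p(\cdot,Du)$ is bounded by Lemma~\ref{lem.reguum} and $\beta$ is bounded, $m^h$ is well defined, has second moments bounded uniformly in $h$, and $(m^h,\alpha^h)\in\ms M(t_0,m_0)$; hence, setting
\[
j(h):=\int_{t_0}^T\Big(\int_{\R^d}L(x,\alpha^h(t,x))\,m^h(t,dx)+\mathcal F(m^h(t))\Big)\,dt+\mathcal G(m^h(T)),
\]
optimality of $(m,\alpha)$ gives $j(h)\ge j(0)=\mathcal U(t_0,m_0)$ for all such $h$.

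The technical heart of the argument is to prove that $h\mapsto m^h$ admits a second order expansion at $h=0$, say in $C^0([t_0,T],(C^{2+\delta}(\R^d))')$, of the form $m^h=m+h\rho+h^2\tau+o(h^2)$, where $\rho$ is the solution of \eqref{eq.forsecondorder} and $\tau\in C^0([t_0,T],(C^{2+\delta}(\R^d))')$ solves, in the sense of distributions, $\partial_t\tau-\Delta\tau+\dv(\tau\alpha)+\dv(\rho\beta)=0$ in $(t_0,T)\times\R^d$ with $\tau(t_0)=0$. This follows from the Duhamel formula for these linearised Fokker--Planck equations, the smoothing of the heat semigroup and a singular Gr\"onwall estimate (the hypotheses on $\beta$ near $t_0$ are tailored to this); the same analysis shows that $\rho$ and $\tau$ are bounded families of finite signed measures of zero mass with uniformly bounded moments, so that all the pairings below are finite by Lemma~\ref{lem.reguum}. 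I expect this step --- in particular, making the remainder $o(h^2)$ uniform in $t$ --- to be the main obstacle; everything after it is bookkeeping. Granting it, insert the expansion into $j(h)$, expand $L(x,\alpha+h\beta)$ to second order in $h$ pointwise and $\mathcal F(m^h(t))$, $\mathcal G(m^h(T))$ to second order via the flat derivatives; the coefficient of $h^2$ in $j(h)$ equals
\begin{align*}
\mathcal I := {}&\tfrac12\int_{t_0}^T\int_{\R^d}L_{\alpha,\alpha}(x,\alpha(t,x))\,\beta(t,x)\cdot\beta(t,x)\,m(t,dx)\,dt\\
&+\int_{t_0}^T\bigl(\lg L(\cdot,\alpha(t,\cdot)),\tau(t)\rg+\lg F(\cdot,m(t)),\tau(t)\rg+\lg L_{\alpha}(\cdot,\alpha(t,\cdot))\cdot\beta(t,\cdot),\rho(t)\rg\bigr)\,dt\\
&+\tfrac12\int_{t_0}^T\lg\tfrac{\delta F}{\delta m}(\cdot,m(t),\cdot),\rho(t)\otimes\rho(t)\rg\,dt\\
&+\lg G(\cdot,m(T)),\tau(T)\rg+\tfrac12\lg\tfrac{\delta G}{\delta m}(\cdot,m(T),\cdot),\rho(T)\otimes\rho(T)\rg,
\end{align*}
with $L_\alpha$ and $L_{\alpha,\alpha}$ the first and second derivatives of $L$ in its second variable. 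Since $j(h)\ge j(0)$ near $0$, the linear coefficient of $j$ at $0$ vanishes and $\mathcal I\ge0$.

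It remains to remove from $\mathcal I$ the terms that are linear in $\tau$. Pairing the Hamilton--Jacobi equation $-\partial_t u-\Delta u+H(x,Du)=F(x,m(t))$ against $\tau$, integrating by parts in space, and using $\alpha=-H_p(\cdot,Du)$ together with the Legendre identities $L(x,\alpha(t,x))=Du(t,x)\cdot H_p(x,Du(t,x))-H(x,Du(t,x))$ and $L_\alpha(x,\alpha(t,x))=-Du(t,x)$, one finds
\[
\frac{d}{dt}\lg u(t),\tau(t)\rg=-\lg L(\cdot,\alpha(t,\cdot)),\tau(t)\rg-\lg F(\cdot,m(t)),\tau(t)\rg-\lg L_\alpha(\cdot,\alpha(t,\cdot))\cdot\beta(t,\cdot),\rho(t)\rg;
\]
integrating over $[t_0,T]$ and using $\tau(t_0)=0$ and $u(T,\cdot)=G(\cdot,m(T))$ then gives
\begin{align*}
\mathcal J := {}&\int_{t_0}^T\bigl(\lg L(\cdot,\alpha(t,\cdot)),\tau(t)\rg+\lg F(\cdot,m(t)),\tau(t)\rg+\lg L_\alpha(\cdot,\alpha(t,\cdot))\cdot\beta(t,\cdot),\rho(t)\rg\bigr)\,dt\\
&+\lg G(\cdot,m(T)),\tau(T)\rg\ =\ 0.
\end{align*}
Subtracting, $2\mathcal I-2\mathcal J$ is exactly the left-hand side of \eqref{ineq.SOC}, so $\mathcal I\ge0$ and $\mathcal J=0$ yield \eqref{ineq.SOC}.
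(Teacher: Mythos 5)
Your plan is the same classical second-variation argument the paper uses: perturb $\alpha\mapsto\alpha+h\beta$, expand the cost in $h$, use first-order optimality to kill the linear term, deduce nonnegativity of the quadratic term, and then pair against the multiplier $u$ (using the Legendre identities $L_\alpha=-Du$ and $L+H=\alpha\cdot(-Du)$) to strip away the second-order-in-$m$ remainder and isolate the expression in \eqref{ineq.SOC}. Your duality computation $\frac{d}{dt}\lg u,\tau\rg=\cdots$ checks out and is exactly the cancellation identity the paper invokes.

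Two technical differences in execution are worth noting, both of which make the paper's route a bit cleaner. First, the paper does not prove a genuine Taylor expansion $m^h=m+h\rho+h^2\tau+o(h^2)$ in $C^0([t_0,T],(C^{2+\delta})')$, which is the step you rightly flag as the main obstacle; instead it writes the \emph{exact} decomposition $m_h=m+h\rho+h^2\xi_h$, where $\xi_h$ solves the same linear equation as your $\tau$ but with drift $\alpha+h\beta$, shows $\|\xi_h\|_\infty$ is bounded, and only needs $\xi_h\rightharpoonup^\ast\xi$ in $L^\infty$-weak-$\ast$. Since $\xi_h$ enters the $h^2$-coefficient of $j(h)$ linearly, this soft convergence suffices, so no quantitative $o(h^2)$ remainder estimate is needed. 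Second, the paper first runs the whole argument for $\beta\in C^\infty_c((t_0,T]\times\R^d)$, for which $m\beta$ is smooth and the existence and regularity of $\rho$ are elementary, and only afterwards passes to general continuous or $L^\infty$ $\beta$ vanishing near $t_0$ by an approximation argument built on a priori estimates on $\rho$ in $(W^{1,\infty})'$ and $(C^{2+\delta})'$; this sidesteps the singular Gr\"onwall analysis you allude to. With those two substitutions your argument becomes the paper's proof.
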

This statement is an adaptation of an analogous result in \cite{BrCa}. The existence of the solution to \eqref{eq.forsecondorder} and the proof of \eqref{ineq.SOC} are given in the Appendix. \\

It is well-known that the map $\mathcal V^N$ defined in \eqref{def.VN} solves the uniformly parabolic Hamilton-Jacobi-Bellman (HJB for short) equation 
$$
\left\{\begin{array}{l}
\ds -\partial_t \mathcal V^N(t,{\bf x}) -\sum_{j=1}^N \Delta_{x^j}\mathcal V^N(t,{\bf x}) +\frac1N\sum_{j=1}^N H(x^j, N D_{x^j}\mathcal V^N(t, {\bf x}))=\mathcal F(m^N_{{\bf x}})\ \ {\rm in}\ \  (0,T)\times (\R^d)^N \\[1.5mm]
\ds  \mathcal V^N(T,{\bf x})= \mathcal G(m^N_{{\bf x}})\ \ {\rm in}\ \ (\R^d)^N,
\end{array}\right.
$$
and, therefore $\mathcal V^N$ is smooth for any $N$. This is in contrast with the limit $\mathcal U$, which might not be $C^1$. The following result, proved in \cite{CDJS}, states however that both maps are uniformly Lipschitz continuous. 

\begin{lem}[Regularity of $\mathcal V^N$ and of $\mathcal U$] \label{lem.estiVN}  Assume \eqref{ass.main}. There exists constant $C$ depending on the data such that 
$$
\|\mathcal V^N\|_\infty+N \sup_{j=1,\ldots,N} \|D_{x^j}\mathcal V^N\|_\infty \leq C,
$$
and, for all  $(t,m), (t',m')\in [0,T]\times \Pk,$ 
$$
\left|\mathcal U(t,m)-\mathcal U(t',m')\right| \leq C(|t-t'|+{\bf d}_1(m,m')). 
$$
\end{lem}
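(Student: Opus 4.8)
The statement to prove is Lemma~\ref{lem.estiVN}, which has two parts: uniform-in-$N$ Lipschitz/sup bounds on $\mathcal V^N$ and its derivatives, and global Lipschitz continuity of the limit value $\mathcal U$ in $(t,m)$ with respect to the $\dk$ metric. Since the lemma is explicitly attributed to \cite{CDJS}, the proof is a matter of assembling the right a priori estimates; here is how I would organize it.

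\textbf{The bound on $\mathcal V^N$.} First I would get the $L^\infty$ bound on $\mathcal V^N$ by simply inserting the admissible control $\alpha\equiv 0$ into $J^N$ and using the lower bound $L(x,0)\ge -C$ (from $H\le C(1+|p|^2)$, which forces $L(x,0)=\sup_p(-H(x,p))\ge -H(x,0)\ge -C$), together with the boundedness of $\mathcal F$ and $\mathcal G$; the lower bound on $\mathcal V^N$ comes from $L(x,a)\ge -C+C^{-1}|a|^2/ (\text{something})$, the analogue of \eqref{hyp.growth_1} for $L$, which keeps $J^N$ bounded below uniformly. For the gradient bound $N\|D_{x^j}\mathcal V^N\|_\infty\le C$, the key point is the structure of the HJB equation: writing $w^j = N D_{x^j}\mathcal V^N$ and differentiating the HJB equation in $x^j$, one gets a uniformly parabolic equation for $D_{x^j}\mathcal V^N$ whose zeroth-order data involves $\frac1N D_{x^j}H(x^j, w^j)$ and $D_{x^j}\mathcal F(m^N_{\bf x})$; using the structural bound $|D_xH(x,p)|\le C(|p|+1)$ from \eqref{hyp.growth_1} and the boundedness of $D_m\mathcal F$ (which controls $D_{x^j}[\mathcal F(m^N_{\bf x})]$ by $C/N$), one obtains by the maximum principle applied to $|w^j|^2$ or via a Bernstein-type argument a bound $\|w^j\|_\infty\le C$ independent of $N$ and $j$. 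This is precisely the argument of \cite[Lemma 3.3]{CDJS} referenced in Lemma~\ref{lem.reguum}, and I would cite it rather than reproduce it.

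\textbf{The Lipschitz bound on $\mathcal U$.} For the spatial/measure Lipschitz estimate, I would work directly with the variational representation \eqref{takis19}. Given $(t_0,m_0)$ and $(t_0,m_0')$ (first handle fixed time), take a near-optimal pair $(m,\alpha)\in\ms M(t_0,m_0)$ for $\mathcal U(t_0,m_0)$ and construct a competitor for $\mathcal U(t_0,m_0')$ by transporting: let $\pi$ be an optimal coupling for $\dk(m_0,m_0')$ and run the same feedback structure, i.e. solve the continuity/Fokker--Planck equation with the same drift $-H_p(x,Du)$ but initial datum $m_0'$, or equivalently couple the two SDEs with the same Brownian motion and controls. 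Since the drift $-H_p(x,Du)$ is Lipschitz in $x$ uniformly (Lemma~\ref{lem.reguum} gives $\|u\|_{C^{2+\delta/2,4+\delta}}\le C$, hence $D^2u$ bounded), Gronwall gives $\dk(m(t),m'(t))\le e^{CT}\dk(m_0,m_0')$ for all $t$, and the same for the terminal time. Then the difference of the two costs is bounded by $C\dk(m_0,m_0')$ using: the Lipschitz-in-$m$ regularity of $\mathcal F$ and $\mathcal G$ (from \eqref{F},\eqref{G}, boundedness of $D_m$), and the Lipschitz continuity of $x\mapsto L(x,\alpha(t,x))$ integrated against the coupled trajectories — here one uses that $L$ is locally Lipschitz in $x$ and that the optimal $\alpha$ is bounded (since $|Du|$ is bounded, $\alpha=-H_p(x,Du)$ is bounded on the relevant range). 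For the time-Lipschitz estimate, with $t_0<t_0'$ use dynamic programming: $\mathcal U(t_0,m_0)\le \int_{t_0}^{t_0'}(\dots)dt + \mathcal U(t_0', m(t_0'))$ for an optimal trajectory, bound the short-time integral by $C|t_0'-t_0|$ (boundedness of $L$ along the optimal control and of $\mathcal F$), and combine with the just-proved spatial estimate plus $\dk(m_0, m(t_0'))\le C|t_0'-t_0|^{1/2}$; the reverse inequality is symmetric, and a standard argument upgrades the resulting $|t-t'|^{1/2}$ bound to the full $|t-t'|$ bound (or one simply notes that $\mathcal U$ being a viscosity solution of the HJB equation with bounded right-hand side gives the Lipschitz-in-time bound once Lipschitz-in-space is known).

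\textbf{Main obstacle.} The genuinely delicate point is the $N$-uniform gradient bound on $\mathcal V^N$: naive differentiation of the HJB equation produces coupling terms through all the other variables $x^i$, $i\ne j$, in $\mathcal F(m^N_{\bf x})$ and $\mathcal G(m^N_{\bf x})$, and one must see that these contribute only $O(1/N)$ per particle so that, after scaling by $N$, they stay bounded; this is exactly where the quantitative regularity hypotheses \eqref{F}--\eqref{G} (bounded $D_m\mathcal F$, $D_m\mathcal G$, and their $y$-derivatives) are essential. Everything else — the $L^\infty$ bound and the Lipschitz continuity of $\mathcal U$ — is a fairly routine coupling/dynamic-programming computation given Lemma~\ref{lem.reguum}. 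Since the statement is quoted from \cite{CDJS}, I would present the $L^\infty$ and Lipschitz-of-$\mathcal U$ arguments in brief and refer to \cite[Section~3]{CDJS} for the gradient estimate.
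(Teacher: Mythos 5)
The paper does not prove this lemma at all: it is quoted verbatim as a result of \cite{CDJS} (``The following result, proved in \cite{CDJS}, \ldots''), so there is no in-paper argument to compare yours against, and your decision to present a sketch plus a citation is entirely in the spirit of the paper. Your reconstruction of the $L^\infty$ and gradient bounds on $\mathcal V^N$ (differentiate the HJB in $x^j$, note that $D_{x^j}[\mathcal F(m^N_{\bf x})]$ and $D_{x^j}[\mathcal G(m^N_{\bf x})]$ are $O(1/N)$ by boundedness of $D_m\mathcal F$, $D_m\mathcal G$, apply the maximum principle) and of the Lipschitz-in-measure bound for $\mathcal U$ (transport the same feedback from the two initial data, Gronwall, Lipschitz continuity of $\mathcal F$, $\mathcal G$ and of $x\mapsto L(x,\alpha(t,x))$) are both sound.

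The part that does not hold up as written is the Lipschitz-in-\emph{time} bound for $\mathcal U$. As you yourself observe, the dynamic-programming/coupling argument produces $\dk(m_0,m^*(t'))\le C|t'-t|^{1/2}$ because of the Brownian increment, hence only a $|t-t'|^{1/2}$ modulus, and there is no ``standard argument'' that upgrades a genuine $\tfrac12$-H\"older bound to Lipschitz. The fallback you offer, that ``$\mathcal U$ is a viscosity solution of the HJB equation with bounded right-hand side, hence Lipschitz in time once Lipschitz in space,'' is valid for \emph{first-order} HJ equations, but \eqref{HJdsO0} contains the diffusion term $\int {\rm div}(D_m\mathcal U)m$, which involves $D^2_{ym}\mathcal U$ and is \emph{not} controlled by the spatial Lipschitz bound on $\mathcal U$. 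The correct route (and the reason assumption \eqref{G} demands $\mathcal G\in C^4$) is to bound $\partial_t\mathcal V^N$ directly: since $\mathcal F$ is time-independent, $v:=\partial_t\mathcal V^N$ solves the linear uniformly parabolic equation
$$
-\partial_t v-\sum_j\Delta_{x^j}v+\sum_jH_p(x^j,ND_{x^j}\mathcal V^N)\cdot D_{x^j}v=0,
$$
with terminal data $v(T,\cdot)=\sum_j\Delta_{x^j}\mathcal G(m^N_{\bf x})-\frac1N\sum_jH(x^j,ND_{x^j}\mathcal G(m^N_{\bf x}))+\mathcal F(m^N_{\bf x})$, which is bounded uniformly in $N$ precisely because $\mathcal G$ has two bounded derivatives in $m$; the maximum principle then gives $\|\partial_t\mathcal V^N\|_\infty\le C$ and the time-Lipschitz estimate passes to the limit $\mathcal U$. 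Your sketch should replace the ``upgrade''/viscosity shortcut with an argument along these lines, or simply defer the time-Lipschitz step wholesale to \cite{CDJS} as the paper does.
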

\vs

The following convergence rate is the main result of \cite{CDJS}. 

\begin{prop}[Quantified convergence of $\mathcal V^N$ to $\mathcal U$] \label{prop.quantifCV} Assume \eqref{ass.main}. There exists $\gamma\in (0,1)$ depending on dimension only and $C>0$ depending on the smoothness of the data such that, for any $(t,\bx)\in [0,T]\times (\R^d)^N$, 
$$
\left| \mathcal V^N(t,\bx)-\mathcal U(t,m^N_{\bx})\right|  \leq C \dfrac{1}{N^{\gamma}} \left( 1+ M_2(m^N_{{\bf x}})\right).
$$
\end{prop}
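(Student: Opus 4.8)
The plan is to sandwich $\mathcal V^N(t,\bx)$ between $\mathcal U(t,m^N_{\bx})\pm CN^{-\gamma}(1+M_2(m^N_{\bx}))$, proving the upper bound by pushing the mean field optimizer onto the particle system and the (harder) lower bound by a verification argument run along the particle trajectories against a mollified version of $\mathcal U$. Both directions will rely on Lemma~\ref{lem.reguum} --- in particular the uniform bound $\|u\|_{C^{2+\delta/2,4+\delta}}\le C$ on the adjoint and the second-moment estimate $M_2(m(s))\le CM_2(m_0)$ --- and on the Lipschitz continuity of $\mathcal U$ from Lemma~\ref{lem.estiVN}; the exponent $\gamma$ will come from a quantitative bound on the Wasserstein distance between an empirical measure of independent samples and the average of their laws, controlled by second moments.

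\textbf{Upper bound.} Given $(t_0,\bx)$, I would take a minimizer $(\hat m,\hat\alpha)$ for $\mathcal U(t_0,m^N_{\bx})$ with multiplier $\hat u$, so that $\hat\alpha(s,y)=-H_p(y,D\hat u(s,y))$ with $D\hat u$ bounded and Lipschitz in space by Lemma~\ref{lem.reguum}, and run the decoupled feedback on the $N$ particles, namely $X^i_s=x^i_0-\int_{t_0}^sH_p(X^i_r,D\hat u(r,X^i_r))\,dr+\sqrt2(B^i_s-B^i_{t_0})$, which is an admissible control in $\mathcal A^N$. Since the $X^i$ are independent and $\hat m$ solves the (linear) Fokker--Planck equation with drift $-H_p(\cdot,D\hat u)$ started from $\hat m(t_0)=m^N_{\bx}=\frac1N\sum_i\delta_{x^i_0}$, one gets $\frac1N\sum_i\mathrm{Law}(X^i_s)=\hat m(s)$; and by Legendre duality $L(y,\hat\alpha(s,y))=H_p(y,D\hat u)\cdot D\hat u-H(y,D\hat u)$, so the running cost along the $X^i$ equals the mean field running cost in expectation. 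The coupling terms would be handled using that $\mathcal F,\mathcal G$ are ${\bf d}_1$-Lipschitz (their flat derivatives are bounded by \eqref{F}, \eqref{G}) together with $\E[{\bf d}_1(m^N_{\mathbf X_s},\hat m(s))]\le CN^{-\gamma}(1+M_2(\hat m(s)))$ and $M_2(\hat m(s))\le CM_2(m^N_{\bx})$, giving $\mathcal V^N(t_0,\bx)\le J^N(t_0,\bx,\alpha)\le\mathcal U(t_0,m^N_{\bx})+CN^{-\gamma}(1+M_2(m^N_{\bx}))$.

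\textbf{Lower bound.} Here I would first regularize $\mathcal U$ in the measure variable, producing $\mathcal U^\epsilon$ --- say by averaging $\mathcal U(t,\cdot)$ over a convolution-type $\epsilon$-neighbourhood in the Wasserstein space --- with the properties that $\mathcal U^\epsilon$ is $C^1$ in $t$ and $C^2$ in $m$, $\|\mathcal U^\epsilon-\mathcal U\|_\infty\le C\epsilon$, $\|D_m\mathcal U^\epsilon\|_\infty\le C$, $\|D^2_{mm}\mathcal U^\epsilon\|_\infty\le C\epsilon^{-\beta}$ for some $\beta>0$, and --- using the dynamic programming principle for $\mathcal U$ and convexity of $H$ in $p$ (so that Jensen bounds $\int H(y,D_m\mathcal U^\epsilon)\,dm$ by the average of $\int H(y,D_m\mathcal U)\,dm$) --- $\mathcal U^\epsilon$ is a subsolution of the master equation \eqref{HJdsO0} up to an error $\delta(\epsilon)\to0$. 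Then, for an arbitrary admissible $(\mathbf X,\alpha)$ on the particle system, I would apply It\^o's formula to $s\mapsto\mathcal U^\epsilon(s,m^N_{\mathbf X_s})$ using $D_{x^j}[\mathcal U^\epsilon(s,m^N_{\bx})]=\frac1N D_m\mathcal U^\epsilon(s,m^N_{\bx},x^j)$ and the second-order identity $D^2_{x^jx^j}[\mathcal U^\epsilon(s,m^N_{\bx})]=\frac1N D_yD_m\mathcal U^\epsilon(s,m^N_{\bx},x^j)+\frac1{N^2}D^2_{mm}\mathcal U^\epsilon(s,m^N_{\bx},x^j,x^j)$; the It\^o correction from the independent Brownian motions produces precisely $\int\mathrm{div}_yD_m\mathcal U^\epsilon\,dm^N$ --- the term present in \eqref{HJdsO0} --- plus the lower-order term $\frac1N\int\mathrm{tr}\,D^2_{mm}\mathcal U^\epsilon\,dm^N=O(N^{-1}\epsilon^{-\beta})$. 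Combining Young's inequality $\int D_m\mathcal U^\epsilon\cdot\alpha\,dm^N+\int L(y,\alpha)\,dm^N\ge-\int H(y,D_m\mathcal U^\epsilon)\,dm^N$, the approximate-subsolution property, and $\mathcal U^\epsilon(T,\cdot)\le\mathcal G(\cdot)+C\epsilon$, and taking expectations --- the stochastic integral having zero mean after a localization that uses the second-moment bound on $\mathbf X_s$ (this is where the factor $M_2$ enters) --- gives $J^N(t_0,\bx,\alpha)\ge\mathcal U(t_0,m^N_{\bx})-C\epsilon-T\delta(\epsilon)-CN^{-1}\epsilon^{-\beta}$ up to moment-dependent constants. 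Infimizing over $\alpha$ and choosing $\epsilon\sim N^{-1/(1+\beta)}$ yields the lower bound.

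\textbf{Main obstacle.} The hard part will be the lower bound: the naive approach of reading off a mean field control from the optimal particle trajectory fails because that trajectory is only exchangeable, so its empirical measure need not concentrate on the average of the marginals. The genuinely delicate steps are (i) constructing $\mathcal U^\epsilon$ so that it is at once smooth enough in $m$ for It\^o, $C\epsilon$-close to $\mathcal U$, with $D^2_{mm}$ blowing up only polynomially in $\epsilon^{-1}$, and still an approximate subsolution of \eqref{HJdsO0} --- this last point being where the convexity of $H$ in $p$ and the dynamic programming principle are essential; (ii) keeping track of the moment dependence on the unbounded state space $\R^d$ so that every error carries only the factor $1+M_2$, which is where assumptions \eqref{convex}--\eqref{G} and the uniform estimates of Lemma~\ref{lem.reguum} are used heavily; and (iii) justifying It\^o's formula and the vanishing of the stochastic integral for controls with only $L^2$ bounds, which needs a careful localization argument.
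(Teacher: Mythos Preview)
The paper does not prove this proposition at all: it is stated without proof and explicitly attributed as ``the main result of \cite{CDJS}'' (Cardaliaguet, Daudin, Jackson and Souganidis). So there is no proof in the present paper to compare your proposal against; the result is simply imported as a black box and then used in Section~\ref{sec.proofThm2}.

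That said, your outline is broadly consistent with the strategy of \cite{CDJS}. The upper bound you sketch --- running the mean field optimal feedback on the $N$ particles and using an empirical-measure concentration estimate for the coupling terms --- is indeed the easy direction there. For the lower bound, \cite{CDJS} also proceeds by regularizing $\mathcal U$ in the measure variable (via a sup-convolution/mollification in a Hilbert-space lift) to produce an approximate classical subsolution of the master equation, and then runs a verification argument along the $N$-particle dynamics; the extra It\^o term $\frac{1}{N}\int \mathrm{tr}\,D^2_{mm}\mathcal U^\epsilon\,dm^N$ is exactly the mechanism that produces the $N^{-\gamma}$ rate after balancing against the mollification error. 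The three obstacles you single out --- building $\mathcal U^\epsilon$ with polynomial blow-up of $D^2_{mm}$ while preserving the subsolution property, tracking the $1+M_2$ factor on $\R^d$, and localizing the stochastic integral --- are precisely the technical core of \cite{CDJS}, and each of them requires substantial work that goes well beyond what can be sketched here; in particular, the construction of $\mathcal U^\epsilon$ with a quantitative subsolution property and controlled second derivatives is the heart of that paper and is not a routine mollification.
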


\section{The regularity of $\mathcal U$}\label{sec.proofThm1}

We prove here Theorem \ref{thm.mainreguU0}. A crucial  step is the analysis of a linearized system, which is reminiscent of a linearized system studied in \cite{CDLL} and \cite{BrCa} for MFG problems.  The main and important difference from \cite{CDLL} is that here we do not assume that $\ms F$ and $\ms G$ are convex, while 
 \cite{BrCa} deals with problems on the torus. We go around the lack of monotonicity by using the notions of 
stability and strong stability of a solution, which are introduced next using the linearized system. Finally, stability  is also  used  to define and analyze the open, dense set $\mathcal O$ on which the map $\mathcal U$ will eventually be smooth. 

\subsection*{The linearized system}  We fix $t_0\in [0,T)$, a constant $C_0$,  and, for  $m_0\in \Pw$ and $V:[t_0,T]\times \R^d\to \R^d \ \ \text{with} \ \  \|V\|_{C^{1,3}}\leq C_0, $ let  $m$  be  the solution to 
\be\label{eq.m-Appen}
\left\{\begin{array}{l}
\partial_t m -\Delta m -{\rm div}(Vm)=0\ \  {\rm in}\ \ (t_0,T)\times \R^d  \\[1.2mm]
m(t_0)=m_0\ \  {\rm in}\ \  \R^d.
\end{array}\right.
\ee

We analyze the inhomogeneous linearized system 
\be\label{eq.LS}
\left\{ \begin{array}{l}
\ds -\partial_t z -\Delta z +V(t,x)\cdot Dz = \frac{\delta F}{\delta m}(x,m(t))(\rho(t))+R^1(t,x)\ \  {\rm in}\ \  (t_0,T)\times \R^d, \\[3mm]
\ds \partial_t\rho-\Delta \rho -{\rm div}(V\rho) -\sigma {\rm div}(m\Gamma Dz) = \sigma {\rm div}(R^2)\ \ {\rm in}\ \  (t_0,T)\times \R^d,\\[1.2mm]
\ds \rho(t_0)=\sigma \xi \ \ \text{and} \ \ z(T,x)= \frac{\delta G}{\delta m}(x,m(T))(\rho(T))+R^3\ \  {\rm in}\ \   \R^d,
\end{array}\right.
\ee
where  
\be\label{takis30}
\begin{split}
& \sigma\in [0,1] \ \ \text{and} \ \ \delta\in (0,1),\\[1.2mm]
& \Gamma\in C^0([0,T]\times \R^d; \R^{d\times d}) \ \ \text{ with}  \ \ \|\Gamma\|_\infty\leq C_0, \\[1.2mm]
& R^1\in C^{\delta/2,\delta}, \ \  R^2\in L^\infty([t_0,T], (W^{1,\infty})'(\R^d,\R^d)), \ \  R^3\in C^{2+\delta} \ \ \text{ and} \ \  \xi\in (W^{1, \infty})'. 
\end{split}
\ee

The pair  $(z,\rho) \in C^{0,1}([t_0,T]\times\R^d) \times C^0([0,T], (C^{2+\delta})')$ is a solution to \eqref{eq.LS} if $z$
and $\rho$ satisfy respectively  the first and second equation in the sense of distributions.
\vs
Note that, because of the regularity of $\rho$ and the assumptions on $\mathcal F$ and $\mathcal G$, the maps $(t,x)\to  \frac{\delta F}{\delta m}(x,m(t))(\rho(t))$ and  $x\to \frac{\delta G}{\delta m}(x,m(T))(\rho(T))$ are continuous and bounded. \

\vs

We will often use system \eqref{eq.LS} in which $V(t,x)= H_p(x,Du(t,x))$ and $\Gamma(t,x)=H_{pp}(x,Du(t,x))$, where $(u,m)$ is a classical solution to \eqref{MFG}. In this case, $V$, $\Gamma$ and $m$  satisfy the conditions above. 
\vs

Next we introduce the notion of strong stability for the homogeneous version of \eqref{eq.LS}, that is the system
\be\label{eq.LSH}
\left\{ \begin{array}{l}
-\partial_t  z -\Delta  z +V(t,x)\cdot D z = \frac{\delta F}{\delta m}(x,m(t))( \rho(t)) \ \  {\rm in}\ \ (t_0,T)\times \R^d, \\[1.2mm]
\partial_t \rho-\Delta  \rho -{\rm div}(V \rho) -\sigma{\rm div}(m\Gamma D z) =0\ \  {\rm in}\ \ (t_0,T)\times \R^d, \\[1.2mm]
\rho(t_0)= 0 \ \ \text{and} \ \  z(T,x)= \frac{\delta G}{\delta m}(x,m(T))( \rho(T))\ \  {\rm in}\ \  \R^d. 
\end{array}\right.
\ee

We say that 
\be\label{takis31}
\text{the system \eqref{eq.LSH} is strongly stable if, for  any $\sigma\in [0,1]$, its  unique solution is $(z,\rho)=(0,0)$.}
\ee

The main result of the subsection is  a uniqueness and regularity result for the solution to  \eqref{eq.LS}. 

\begin{lem} \label{lem.estLS-Appen} Assume \eqref{ass.main} and \eqref{takis31}. 
 There exist a neighborhood $\mathcal V$ of $(V,\Gamma)$ in the topology of locally uniform convergence, and $\eta, C>0$ such that, for any $(V', t_0', m_0', \Gamma', R^{1,'}, R^{2,'}, R^{3,'}, \xi',\sigma')$ with 
\be\label{cond.data-Appen}
\begin{split}
 (V',\Gamma')\in \mathcal V,\quad |t_0'-t_0|+{\bf d}_2(m_0',m_0) \leq \eta, \ \  \|V'\|_{C^{1, 3}}+\|\Gamma'\|_\infty \leq 2 C_0, \ \  \sigma'\in [0,1],\\[1.2mm]
 R^{1,'}\in C^{\delta/2,\delta}, \; R^{2,'}\in L^\infty([t_0,T], (W^{1,\infty})'(\R^d,\R^d)), \; R^{3,'}\in C^{2+\delta},
 \; \xi'\in (W^{1, \infty})',
 \end{split}
 \ee
any solution $(z',\rho')$ to \eqref{eq.LS} associated with these data on $[t_0',T]$ and $m'$ the solution to \eqref{eq.m-Appen} with drift $V'$ and initial condition $m_0'$ at time $t_0'$ satisfies 
\be\label{takis32}
\|z'\|_{C^{(2+\delta)/2,2+\delta}}+ \sup_{t\in [t_0',T]} \|\rho'(t,\cdot)\|_{(C^{2+\delta})'} + \sup_{t'\neq t} \frac{\|\rho'(t',\cdot)-\rho'(t,\cdot)\|_{(C^{2+\delta})'}}{|t'-t|^{1/2}} \leq CM' , 
\ee
where 
\be\label{def.Mprime-Appen}
M':= \|\xi'\|_{(W^{1,\infty})'}+\|R^{1,'}\|_{C^{\delta/2, \delta}}+\|R^{3,'}\|_{C^{2+\delta}}+ \sup_{t\in [t_0',T]} \|R^{2,'}(t)\|_{(W^{1,\infty})'}.
\ee
\end{lem}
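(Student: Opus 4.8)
The plan is to argue by a compactness--contradiction scheme in which strong stability \eqref{takis31} is used only to handle arbitrarily long time horizons. The bookkeeping I would set up reduces everything to the control of the single quantity $\Phi:=\sup_{t\in[t_0',T]}\|\rho'(t,\cdot)\|_{(C^{2+\delta})'}$: I claim that, uniformly over the data class \eqref{cond.data-Appen}, any solution $(z',\rho')$ of \eqref{eq.LS} satisfies $\|z'\|_{C^{(2+\delta)/2,2+\delta}}+[\rho']_{C^{1/2},(C^{2+\delta})'}\le C(M'+\Phi)$, so that the left-hand side of \eqref{takis32} is $\le C(M'+\Phi)$ and it suffices to prove $\Phi\le CM'$.

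To get this reduction I would combine three standard parabolic estimates, all uniform for $\|V'\|_{C^{1,3}}\le 2C_0$: (i) a Schauder estimate for the backward equation, $\|z'\|_{C^{(2+\delta)/2,2+\delta}}\le C(M'+\Phi+[\rho']_{C^{1/2},(C^{2+\delta})'})$, using \eqref{F}--\eqref{G} and the $1/2$-Hölder-in-time bound for $m'$; (ii) a gradient (maximum-principle) bound for the same equation that only requires the source in $L^\infty$, $\|Dz'\|_\infty\le C(M'+\Phi)$; and (iii) a duality estimate for the forward equation, $\|\partial_t\rho'(t)\|_{(C^{2+\delta})'}\le C(M'+\Phi+\|Dz'\|_\infty)$, obtained by duality against the adjoint equation and using that $m'$ is a probability measure with $\|\Gamma'\|_\infty\le 2C_0$. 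Combining (ii) and (iii) gives $[\rho']_{C^{1/2},(C^{2+\delta})'}\le C(M'+\Phi)$, and feeding this into (i) gives the claimed bound; in particular any solution of \eqref{eq.LS} automatically lies in the spaces appearing in \eqref{takis32}, so the left-hand side there is finite and the normalizations below are legitimate. (For $T-t_0'$ below a universal threshold one can in fact close the estimate without \eqref{takis31}, since the adjoint representation gives $\|\rho'(t)\|_{(C^{2+\delta})'}\le CM'+C(T-t_0')\Phi$; strong stability is needed precisely to remove this restriction.)

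For the general case I would assume no $(\mathcal V,\eta,C)$ works, take shrinking neighborhoods $\mathcal V_n$ of $(V,\Gamma)$, $\eta_n\downarrow 0$, $C_n=n$, and pick admissible data and solutions $(z_n',\rho_n')$ with $\Phi_n:=\sup_t\|\rho_n'(t)\|_{(C^{2+\delta})'}>nM_n'$; one may assume $\Phi_n\in(0,\infty)$, since $\Phi_n=0$ forces $\rho_n'\equiv 0$ and then (i) already yields \eqref{takis32}. Setting $\tilde z_n:=z_n'/\Phi_n$, $\tilde\rho_n:=\rho_n'/\Phi_n$ one has $\sup_t\|\tilde\rho_n(t)\|_{(C^{2+\delta})'}=1$, $\tilde M_n:=M_n'/\Phi_n<1/n$, and, by the a priori estimates, $\|\tilde z_n\|_{C^{(2+\delta)/2,2+\delta}}+\|D\tilde z_n\|_\infty+[\tilde\rho_n]_{C^{1/2},(C^{2+\delta})'}\le C$. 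Because $\dw(m_{0,n}',m_0)\le\eta_n$ and $m_0\in\Pw$, the second moments of $m_{0,n}'$ are bounded, hence $\sup_n\sup_t M_2(m_n'(t))<\infty$ and $\{m_n'(t)\}$ is tight; along a subsequence, $t_{0,n}'\to t_0$, $\sigma_n'\to\sigma\in[0,1]$, $V_n'\to V$, $\Gamma_n'\to\Gamma$ locally uniformly, $m_n'\to m$ in $C^0([0,T],\Pw)$, $\tilde z_n\to z$ in $C^{(2+\delta')/2,2+\delta'}_{loc}$ for $\delta'<\delta$, and $\tilde\rho_n\to\rho$ in $C^0([0,T],(C^{2+\delta})')$ in the weak-$*$ sense. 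Passing to the limit in \eqref{eq.LS} --- the remainder terms vanish in norm since their norms are $\le\tilde M_n$, while $\frac{\delta F}{\delta m}(x,m_n'(t))(\tilde\rho_n(t))$, $\frac{\delta G}{\delta m}(x,m_n'(T))(\tilde\rho_n(T))$ and $\div(m_n'\Gamma_n'D\tilde z_n)$ converge by \eqref{F}--\eqref{G} and the convergences above --- shows that $(z,\rho)$ solves the homogeneous system \eqref{eq.LSH} for $(V,\Gamma,m)$ with parameter $\sigma$ and with $\rho(t_0)=0$; by \eqref{takis31}, $(z,\rho)=(0,0)$.

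The contradiction now comes from the forward equation alone. Since $z=0$, $D\tilde z_n\to 0$ locally uniformly, and together with the uniform tightness of the $m_n'$ this gives $\|m_n'\Gamma_n'D\tilde z_n\|_{L^\infty([0,T],(W^{1,\infty})')}\to 0$; thus $\tilde\rho_n$ solves the plain Fokker--Planck equation $\partial_t\tilde\rho_n-\Delta\tilde\rho_n-\div(V_n'\tilde\rho_n)=\div(S_n)$, $\tilde\rho_n(t_{0,n}')=\sigma_n'\xi_n'/\Phi_n$, with $\|S_n\|_{L^\infty([0,T],(W^{1,\infty})')}\to 0$ and $\|\tilde\rho_n(t_{0,n}')\|_{(W^{1,\infty})'}\le\tilde M_n\to 0$. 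The a priori estimate for this equation --- a \emph{closed} one, obtained by duality against its uniformly parabolic adjoint and Schauder (uniform for $\|V_n'\|_{C^{1,3}}\le 2C_0$) --- then forces $\sup_t\|\tilde\rho_n(t)\|_{(C^{2+\delta})'}\to 0$, contradicting $\sup_t\|\tilde\rho_n(t)\|_{(C^{2+\delta})'}=1$. Hence $\Phi'\le CM'$ holds on a neighborhood as in \eqref{cond.data-Appen}, which with the a priori estimates yields \eqref{takis32}. The main obstacle I anticipate is precisely the mismatch of natural scales between the two equations (Schauder/Hölder for $z$, weak/dual norms for $\rho$): the two a priori estimates do not close by themselves, and strong stability is the only device that breaks this circularity for large $T-t_0$. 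Carrying the compactness argument through requires both identifying the weak-$*$ limit of the normalized $\rho$'s as a solution of \eqref{eq.LSH} despite its low regularity, and --- crucially --- the uniform tightness of the $m_n'$, hence the second-moment hypothesis, without which neither the limit-passage in $\div(m_n'\Gamma_n'D\tilde z_n)$ nor its decay in $(W^{1,\infty})'$ (the heart of the contradiction) would be available.
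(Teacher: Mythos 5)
Your compactness--contradiction skeleton, normalization, and reduction to $\Phi\le CM'$ via a first a priori estimate all match the paper's strategy (the paper's Lemma~\ref{lem.apriori-Appen} is exactly your reduction step). The closing move is genuinely different: the paper shows the normalized $\tilde\rho^n$ converge \emph{strongly} in $C^0([t_0,T],(C^{2+\delta})')$, which simultaneously justifies passing to the limit in the couplings $\frac{\delta F}{\delta m}(\cdot,m^n)(\tilde\rho^n)$, $\frac{\delta G}{\delta m}(\cdot,m^n(T))(\tilde\rho^n(T))$ and guarantees $\sup_t\|\tilde\rho(t)\|_{(C^{2+\delta})'}=1$, so that strong stability \eqref{takis31} is contradicted directly. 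You instead invoke \eqref{takis31} first to deduce $z=0$, and then extract a contradiction from the \emph{forward} equation alone, using a closed Fokker--Planck duality estimate. That second half is clean and, once available, neatly replaces the paper's ``nonvanishing of the limit'' argument.

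However, there is a genuine gap at the step ``passing to the limit in \eqref{eq.LS}\ldots $\frac{\delta F}{\delta m}(x,m_n'(t))(\tilde\rho_n(t))$ \ldots converge by \eqref{F}--\eqref{G} and the convergences above''. This does not follow from the weak-$*$ convergence of $\tilde\rho_n$ you have available: the compactness you derived is in $C^0([t_0,T],(C^{2+\delta}_c)')$ equipped with weak-$*$ (testing against \emph{compactly supported} functions), whereas the couplings pair $\tilde\rho_n(t)$ with the $n$-dependent, globally defined test functions $\frac{\delta F}{\delta m}(x,m_n'(t),\cdot)$ and $\frac{\delta G}{\delta m}(x,m_n'(T),\cdot)$; neither the lack of compact support nor the $n$-dependence of the test function is handled by weak-$*$ convergence alone. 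The paper flags exactly this as its ``first difficulty'' and resolves it by first upgrading to strong convergence of $\tilde\rho^n$, using the split $\tilde\rho^n=\tilde\rho^{n,1}+\tilde\rho^{n,2}$, the adjoint representation \eqref{repeho2n}, locally uniform convergence of $D\tilde z^n$, $D\psi^n$, and the second-moment/tightness estimate on $m^n$ --- all \emph{before} invoking \eqref{takis31}. Your argument reverses the logical order: you appeal to \eqref{takis31} to get $z=0$, and only then carry out (for the forward equation) essentially the same technical work the paper needs to justify the limit in the first place. Since you cannot conclude that $(z,\rho)$ solves \eqref{eq.LSH} before the limit passage is justified, the appeal to strong stability is not yet available, and the circuit does not close. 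The fix is to perform the strong-convergence upgrade of $\tilde\rho_n$ first (independent of $z=0$, as in the paper); after that your forward-equation ending is a valid, and arguably slicker, substitute for the paper's ``nonvanishing'' step.
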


An immediate consequence is the following corollary.

\begin{cor}
Assume \eqref{ass.main} and \eqref{takis31}. Then, for any $(V', m_0', \Gamma')$ satisfying \eqref{cond.data-Appen},
 the corresponding homogeneous linearized system is strongly stable.
\end{cor}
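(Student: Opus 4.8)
The plan is to obtain the corollary as the special case of Lemma~\ref{lem.estLS-Appen} in which all the forcing data vanish. Fix a triple $(V',m_0',\Gamma')$ as in \eqref{cond.data-Appen} together with some $t_0'$ close to $t_0$, let $m'$ be the solution of \eqref{eq.m-Appen} with drift $V'$ and initial datum $m_0'$ at time $t_0'$, and fix an arbitrary $\sigma'\in[0,1]$. The first step is to note that choosing $R^{1,'}\equiv 0$, $R^{2,'}\equiv 0$, $R^{3,'}\equiv 0$ and $\xi'=0$ --- which trivially lie in the function spaces demanded in \eqref{cond.data-Appen} --- turns the inhomogeneous system \eqref{eq.LS} into exactly the homogeneous system \eqref{eq.LSH} attached to $(V',\Gamma',m',\sigma')$, so that any pair $(z',\rho')$ solving \eqref{eq.LSH} is, by definition, a solution of \eqref{eq.LS} with these data.

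The second step is to evaluate the constant $M'$ from \eqref{def.Mprime-Appen} for this choice: since $\xi'$, $R^{1,'}$, $R^{3,'}$ and $R^{2,'}$ all vanish, every summand in \eqref{def.Mprime-Appen} is zero, hence $M'=0$. The estimate \eqref{takis32} furnished by Lemma~\ref{lem.estLS-Appen} then reads
\[
\|z'\|_{C^{(2+\delta)/2,2+\delta}}+\sup_{t\in[t_0',T]}\|\rho'(t,\cdot)\|_{(C^{2+\delta})'}\leq C M'=0,
\]
so that $z'\equiv 0$ and $\rho'\equiv 0$. Since $\sigma'\in[0,1]$ was arbitrary, the homogeneous linearized system associated with $(V',m_0',\Gamma')$ on $[t_0',T]$ is strongly stable in the sense of \eqref{takis31}, which is the assertion of the corollary.

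There is essentially no obstacle here: the entire content is the $M'=0$ instance of Lemma~\ref{lem.estLS-Appen}. The only points worth checking are that a solution of \eqref{eq.LSH} genuinely qualifies as a solution of \eqref{eq.LS} with zero data (immediate, since \eqref{eq.LSH} is \eqref{eq.LS} with $R^1=R^2=R^3=\xi=0$), and that the triples allowed by \eqref{cond.data-Appen} already belong to the neighborhood $\mathcal V$ of $(V,\Gamma)$ and the ball of radius $\eta$ around $(t_0,m_0)$ produced by the lemma --- again immediate, because \eqref{cond.data-Appen} is precisely the admissibility condition appearing in the hypotheses of Lemma~\ref{lem.estLS-Appen}.
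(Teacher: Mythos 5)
Your proof is correct and is exactly the observation the paper has in mind when it labels the corollary an ``immediate consequence'': setting $R^{1,'}=R^{2,'}=R^{3,'}=0$ and $\xi'=0$ gives $M'=0$ in \eqref{def.Mprime-Appen}, so \eqref{takis32} forces any solution of \eqref{eq.LSH} to vanish for every $\sigma'\in[0,1]$. Nothing further is needed.
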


The proof of Lemma~\ref{lem.estLS-Appen} follows  some of the  ideas of \cite{CDLL}, where a similar system is studied. The main differences are  that, here, we use the stability condition instead of the monotonicity assumption of \cite{CDLL} and  work in an unbounded space. 
\vs

In what follows, we need a preliminary result which we state and prove next. The difference between the estimate below and the one of Lemma~\ref{lem.estLS-Appen} is the right hand side of the former which depends on the solution itself.

\begin{lem}\label{lem.apriori-Appen} Assume \eqref{ass.main} and let $(z,\rho)$ be a solution to \eqref{eq.LS}. There is a constant $C$, depending only on the regularity of $\mathcal F$, $\mathcal G$ and on $\|V\|_{C^{1,3}}+ \|\Gamma\|_\infty$, such that
$$
\|z\|_{C^{(2+\delta)/2,2+\delta}}+ \sup_{t\in [t_0,T]} \sup_{t'\neq t} \frac{\|\rho(t',\cdot)-\rho(t,\cdot)\|_{(C^{2+\delta})'}}{|t'-t|^{\delta/2}} \leq C\left(M+\sup_{t\in [t_0,T]} \|\rho(t)\|_{(C^{2+\delta})'}\right)
$$
where 
$$
M:=\|\xi\|_{(W^{1,\infty})'}+ \|R^{1}\|_{C^{\delta/2, \delta}} + \sup_{t\in [t_0,T]}\|R^{2}(t)\|_{(W^{1,\infty})'}+ \|R^{3}\|_{C^{2+\delta}}.
$$
\end{lem}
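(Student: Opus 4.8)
The plan is to establish the a priori estimate of Lemma~\ref{lem.apriori-Appen} by treating the two equations of \eqref{eq.LS} separately and feeding the output of one into the input of the other, exploiting the fact that the right-hand side is allowed to contain a (weak) norm of the solution itself.

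\textbf{Step 1: the backward equation for $z$.} First I would look at the first equation of \eqref{eq.LS}, namely
$$
-\partial_t z -\Delta z +V\cdot Dz = \frac{\delta F}{\delta m}(x,m(t))(\rho(t))+R^1 \quad \text{in } (t_0,T)\times\R^d,
$$
with terminal data $z(T,\cdot)=\frac{\delta G}{\delta m}(x,m(T))(\rho(T))+R^3$. Because $\mathcal F$ and $\mathcal G$ are of class $C^2$ (indeed $\mathcal G\in C^4$) with uniformly bounded derivatives, the source term $(t,x)\mapsto \frac{\delta F}{\delta m}(x,m(t))(\rho(t))$ is controlled in $C^{\delta/2,\delta}$ by $\sup_t\|\rho(t)\|_{(C^{2+\delta})'}$ together with the $C^{\delta/2}$-in-time continuity of $t\mapsto\rho(t)$ as an element of $(C^{2+\delta})'$; similarly the terminal datum is controlled in $C^{2+\delta}$ by $\|\rho(T)\|_{(C^{2+\delta})'}+\|R^3\|_{C^{2+\delta}}$. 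Since $V\in C^{1,3}$, parabolic Schauder estimates for this linear (uniformly parabolic) equation give
$$
\|z\|_{C^{(2+\delta)/2,2+\delta}} \le C\Bigl(\|R^1\|_{C^{\delta/2,\delta}}+\|R^3\|_{C^{2+\delta}}+\sup_{t}\|\rho(t)\|_{(C^{2+\delta})'}+\sup_{t\neq t'}\tfrac{\|\rho(t)-\rho(t')\|_{(C^{2+\delta})'}}{|t-t'|^{\delta/2}}\Bigr).
$$
Here one should be a little careful to argue the global-in-space validity of the Schauder estimate (as in the proof of Lemma~\ref{lem.reguum}, using the maximum principle and the fact that the data are globally bounded with bounded derivatives), but this is routine. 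So the $C^{(2+\delta)/2,2+\delta}$ norm of $z$ is bounded by $M$ plus the weak norm and H\"older-in-time seminorm of $\rho$.

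\textbf{Step 2: the forward equation for $\rho$.} Next I would turn to the second equation,
$$
\partial_t\rho-\Delta\rho-\mathrm{div}(V\rho)-\sigma\,\mathrm{div}(m\Gamma Dz)=\sigma\,\mathrm{div}(R^2),\qquad \rho(t_0)=\sigma\xi,
$$
and test it against a function $\phi\in C^{2+\delta}$ (equivalently, look at $\rho$ by duality against the backward heat-type equation with drift $-V$). The terms $\sigma\,\mathrm{div}(m\Gamma Dz)$ and $\sigma\,\mathrm{div}(R^2)$ are divergences of fields bounded in $L^\infty$ (for the first, using $\|m(t)\|\le1$ as a measure, $\|\Gamma\|_\infty\le C_0$, and $\|Dz\|_\infty\le\|z\|_{C^{(2+\delta)/2,2+\delta}}$ from Step 1; for the second, $\|R^2(t)\|_{(W^{1,\infty})'}$). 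A standard duality estimate for the Fokker--Planck operator then yields, for each $t$,
$$
\|\rho(t)\|_{(C^{2+\delta})'}\le C\Bigl(\|\xi\|_{(W^{1,\infty})'}+\sup_s\|R^2(s)\|_{(W^{1,\infty})'}+\|z\|_{C^{(2+\delta)/2,2+\delta}}\Bigr),
$$
and, crucially, the same kind of estimate controls the time-regularity seminorm $\sup_{t\neq t'}\|\rho(t)-\rho(t')\|_{(C^{2+\delta})'}/|t-t'|^{\delta/2}$ (in fact one gets exponent $1/2$, hence $\delta/2$), because differences of the test-function solution on nearby time intervals are $|t-t'|^{\delta/2}$-close in the relevant norm. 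Again one must check that the drift $V\in C^{1,3}$ gives enough regularity for the dual backward equation; this is the point where the $C^{1,3}$ assumption on $V$ (rather than merely $C^{0,1}$) is used.

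\textbf{Step 3: combining.} Inserting the bound on $\|z\|_{C^{(2+\delta)/2,2+\delta}}$ from Step 1 into the right-hand side of Step 2, and vice versa, gives a closed system of two inequalities in the two quantities $A:=\|z\|_{C^{(2+\delta)/2,2+\delta}}$ and $B:=\sup_t\|\rho(t)\|_{(C^{2+\delta})'}+\sup_{t\neq t'}\|\rho(t)-\rho(t')\|_{(C^{2+\delta})'}/|t-t'|^{\delta/2}$, of the shape $A\le C(M+B)$ and $B\le C(M+A)$. These do not by themselves close — one cannot absorb — which is exactly why the statement of Lemma~\ref{lem.apriori-Appen} keeps $\sup_t\|\rho(t)\|_{(C^{2+\delta})'}$ on the right-hand side rather than asserting an absolute bound. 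So the conclusion is obtained simply by substituting the Step 1 bound for $A$ into the Step 2 bound for the H\"older seminorm, which produces precisely
$$
A+\sup_{t\neq t'}\frac{\|\rho(t)-\rho(t')\|_{(C^{2+\delta})'}}{|t-t'|^{\delta/2}}\le C\Bigl(M+\sup_{t\in[t_0,T]}\|\rho(t)\|_{(C^{2+\delta})'}\Bigr),
$$
which is the claim. The main obstacle I expect is not the algebra of combining the two estimates — that is immediate — but rather carrying the parabolic Schauder and Fokker--Planck duality estimates out \emph{globally in space on all of $\R^d$} with constants depending only on $\|V\|_{C^{1,3}}+\|\Gamma\|_\infty$ and the data, and in particular getting the time-H\"older control of $\rho$ in the negative norm $(C^{2+\delta})'$ with the right exponent; this requires the same maximum-principle-plus-local-Schauder bootstrap used in Lemma~\ref{lem.reguum}, together with a careful tracking of how the duality pairing $\langle\frac{\delta F}{\delta m}(\cdot,m(t),\cdot),\rho(t)\rangle$ depends continuously on $t$.
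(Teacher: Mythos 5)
Your overall strategy — parabolic regularity for the backward $z$-equation, duality for the forward $\rho$-equation, then combining — is the right one, and your Step~2 duality argument is essentially the paper's. But there is a genuine circularity that makes the proposal as written fail, and it is precisely the point the paper's proof is careful about. Write $R_0:=\sup_t\|\rho(t)\|_{(C^{2+\delta})'}$ and $R_1$ for the time-H\"older seminorm of $\rho$ in $(C^{2+\delta})'$ (so the claim is $\|z\|_{C^{(2+\delta)/2,2+\delta}}+R_1\leq C(M+R_0)$). Your Step~1 applies full parabolic Schauder, which needs the source $\frac{\delta F}{\delta m}(\cdot,m(t))(\rho(t))$ to be $C^{\delta/2,\delta}$, which in turn needs $R_1$ — you say this explicitly. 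But $R_1$ is one of the quantities to be estimated, and your only bound for it is the Step~2 duality estimate, which you feed with $\|Dz\|_\infty\leq\|z\|_{C^{(2+\delta)/2,2+\delta}}$ taken from Step~1. So Step~1 requires Step~2's conclusion and Step~2 requires Step~1's. Substituting one into the other, as Step~3 proposes, produces $R_1\leq C(M+R_0+R_1)$, which cannot be absorbed and yields nothing; your observation that the system ``does not close'' is accurate, but the subsequent ``simple substitution'' does not escape it.

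The paper breaks the circle by interposing a weaker estimate that you have omitted. With only $R_0$ in hand, the source and terminal data in the $z$-equation are merely bounded (in $L^\infty$ and $C^{2+\delta}$ respectively) by $CR_0$, and this already gives, by lower-order parabolic regularity rather than full Schauder, $z\in C^{(1+\delta)/2,1+\delta}$ with $\|Dz\|_\infty\leq CR_0$; no H\"older-in-time control of the source is needed at this stage. The duality argument for $\rho$ then uses only $\|Dz\|_\infty$ (not the full $C^{(2+\delta)/2,2+\delta}$ norm) to produce $R_1\leq C(M+R_0)$, and only after that can one return to the $z$-equation and apply genuine Schauder, now legitimately, to conclude $\|z\|_{C^{(2+\delta)/2,2+\delta}}\leq C(M+R_0)$. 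The missing ingredient in your proposal is precisely this intermediate $C^{(1+\delta)/2,1+\delta}$ bound on $z$ depending on $R_0$ alone, which is what makes the bootstrap non-circular.
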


\begin{proof}  Throughout the proof, $C$ denotes a constant that depends only on the data and may change from line to line. 
\vs

Set $R= \sup_{t\in [t_0,T]} \|\rho(t)\|_{(C^{2+\delta})'}<\infty$. 
It follows that the maps $(t,x)\to  \frac{\delta F}{\delta m}(x,m(t))(\rho(t))$  and $x\to  \frac{\delta G}{\delta m}(x,m(T))(\rho(T))$  are  bounded by $CR$, the latter  in    
$C^{2+\delta}$.
\vs

Then, standard parabolic regularity gives that  $z$ is  bounded in $C^{(1+\delta)/2, 1+\delta}$ by $CR$. 
\vs

The main step of the proof is to show that 
$$
\sup_{t\in [t_0,T]} \sup_{t'\neq t} \frac{\|\rho(t')-\rho(t)\|_{(C^{2+\delta})'}}{|t'-t|^{\delta/2}} \leq C(M+R).
 $$
Arguing by duality, we fix $t_0\leq t_1<t_2\leq T$ and, with $\bar \psi\in C^{2+\delta}$,  we consider the solution  $\psi^i$ for $i=1,2$   to 
 \be\label{eq.dualeqTER}
\left\{\begin{array}{l}
-\partial_t \psi^i -\Delta \psi^i + V(t,x) \cdot D\psi^i = 0\ \  {\rm on}\ \  (t_0,t_i)\times \R^d,\\[1.5mm]
\psi(t_i)= \bar \psi \ \ {\rm in}\ \ \R^d,
\end{array}\right.
\ee
which, in view of the assumption on $V$ and parabolic regularity, satisfies, for $i=1,2$, the bound 
\[\|\psi^i\|_{C^{(2+\delta)/2,2+\delta}}\leq C\|\bar \psi\|_{C^{2+\delta}}.\]
 
It is immediate  that, for $i=1, 2$,  
$$
\lg \rho(t_i), \bar \psi\rg = \lg \xi, \psi^i(t_0)\rg-\sigma \int_{t_0}^{t_i} \int_{\R^d} \Gamma Dz\cdot D\psi^i m dx dt-\sigma \int_{t_0}^{t_i}\lg R^{2}(t),  D\psi^i(t)\rg dt. 
$$ 
Thus
\be\label{takis35}
\begin{split}
& \lg \rho(t_2)-\rho(t_1), \bar \psi\rg  = \lg \xi, \psi^2(t_0)-\psi^1(t_0)\rg-\sigma \int_{t_0}^{t_1} \int_{\R^d} \Gamma Dz\cdot (D\psi^2-D\psi^1) m \\[3mm]
&  - \sigma \int_{t_1}^{t_2} \int_{\R^d} \Gamma Dz \cdot D\psi^2 m  -\sigma \int_{t_0}^{t_1} \lg R^{2}(t),  (D\psi^2-D\psi^1)(t) \rg dt - \sigma\int_{t_1}^{t_2} \lg R^{2}(t),  D\psi^2(t)\rg  dt. 
\end{split}
\ee
Note that $\psi^2-\psi^1$ solves \eqref{eq.dualeqTER} on $(t_0,t_1)$ with a terminal condition at $t_1$ given by $(\psi^2-\psi^2)( t_1,\cdot)=\psi^2(t_1,\cdot)-\psi^2 (t_2,\cdot)$,  which,  in view of the regularity of $\psi^2$,  is bounded  in $W^{2,\infty}$ by $C(t_2-t_1)^{\delta/2}\|\bar \psi\|_{C^{2+\delta}}$.  
\vs
It then follows follows from the maximum principle that   $(\psi^2-\psi^1)( t,\cdot)$ is bounded in $W^{2,\infty}$ by $C(t_2-t_1)^{\delta/2}\|\bar \psi\|_{C^{2+\delta}}$ for any $t$, and,  hence, 
\begin{align*}
&\left|\lg \xi, \psi^2( t_0,\cdot)-\psi^1( t_0,\cdot)\rg\right|   \leq  \|\xi\|_{(W^{1, \infty})'}\|\psi^2( t_0,\cdot)-\psi^1( t_0,\cdot)\|_{W^{1,\infty}} \\[2mm]
& \leq C(t_2-t_1)^{\delta/2}\|\bar \psi\|_{C^{2+\delta}}\|\xi\|_{(W^{1,\infty})'} \leq C(t_2-t_1)^{\delta/2}\|\bar \psi\|_{C^{2+\delta}}M,
\end{align*}
$$
\left|  \int_{t_0}^{t_1} \int_{\R^d} \Gamma Dz \cdot (D\psi^2-D\psi^1) m dx dt\right| \leq C\|Dz\|_\infty(t_2-t_1)^{\delta/2}\|\bar \psi\|_{C^{2+\delta}}\leq  C(t_2-t_1)^{\delta/2}\|\bar \psi\|_{C^{2+\delta}}R
$$
and 
\begin{align*}
\left|\int_{t_0}^{t_1} \lg R^{2}(t), (D\psi^2-D\psi^1)(t) \rg dt\right|& \leq \sup_t \|R^2(t)\|_{(W^{1,\infty})'}\sup_t \|D\psi^2(t)-D\psi^1(t)\|_{W^{1,\infty}}\\
&   \leq C(t_2-t_1)^{\delta/2}\|\bar \psi\|_{C^{2+\delta}}M.
\end{align*}
Note that the regularity of $\psi^2$ yields 
$$
\left| \sigma \int_{t_1}^{t_2} \int_{\R^d} \Gamma Dz \cdot D\psi^2 m dx dt \right| \leq C(t_2-t_1) \|Dz\|_\infty\|D\psi^2\|_\infty \leq C(t_2-t_1)^{1/2}  \|\bar\psi\|_{C^{2+\delta}}R
$$
and 

$$
\left|\int_{t_1}^{t_2} \lg R^{2}(t), D\psi^2(t) \rg dt\right|\leq (t_2-t_1)\sup_{t} \|R^{2}(t)\|_{(W^{1,\infty})'} \sup_{t}\|D\psi^2(t)\|_{W^{1,\infty}} \leq C(t_2-t_1) \|\bar \psi\|_{C^{2+\delta}}M.  
$$
\vs
Since $\bar \psi$ is arbitrary, it follows from \eqref{takis35} that 
$$
\|\rho(t_2)-\rho(t_1)\|_{(C^{2+\delta})'} \leq C(t_2-t_1)^{\delta/2}(M+R). 
$$
This regularity of $\rho$ implies that the maps $(t,x)\to  \frac{\delta F}{\delta m}(x,m(t))(\rho(t))$ and  $x\to \frac{\delta G}{\delta m}(x,m(T))(\rho(T))$ are bounded  in $C^{\delta/2,\delta}$ and $C^{2+\delta}$ respectively by $C(M+R)$. Thus $z$ is also bounded in  $C^{(2+\delta)/2,2+\delta}$ by $C(M+R)$. 
\end{proof}

\begin{proof}[Proof of Lemma \ref{lem.estLS-Appen}] The first (and main) part of the proof consists in showing the existence of $\mathcal V$, $\eta>0$ and $C>0$ such that, for any $(V', t_0', m_0', \Gamma', R^{1,'}, R^{2,'}, R^{3,'}, \xi',\sigma')$ as in \eqref{cond.data-Appen}, any solution $(z',\rho')$ to \eqref{eq.LS} associated with these data on $[t_0',T]$ satisfies, with $M'$ is defined by \eqref{def.Mprime-Appen}, 
$$
\sup_{t\in [t_0',T]} \|\rho'(t)\|_{(C^{2+\delta})'} \leq CM'.
$$
\vs

We prove this claim by contradiction, assuming the existence of  sequences $(V^n)_{n\in \N}$, $(t_0^n, m_0^n)_{n\in \N}$, $(m^n)_{n\in \N}$,   $(\Gamma^n)_{n\in \N}$, $(R^{i,n})_{n\in \N}$, $(\xi^n)_{n\in \N}$,  and $(\sigma^n)_{n\in \N}$ such that $(V^n,\Gamma^n)$ converges locally uniformly to $(V, \Gamma)$,  
$$
|t_0^n-t_0|+{\bf d}_2(m_0^n,m_0)\leq 1/n, \ \  \|V^n\|_{C^{1, 3}}+\|\Gamma^n\|_\infty \leq 2 C_0, \ \  \sigma^n\in [0,1],
$$
and 
$$
\|\xi^n\|_{(W^{1,\infty})'}+\|R^{3,n}\|_{C^{2+\delta}}+\|R^{1,n}\|_{C^{\delta/2,\delta}} + \sup_{t\in [t_0,T]} \|R^{2,n}(t)\|_{(W^{1,\infty})'}  \leq 1,
$$
and, for each $n$, a solution $(z^n,\rho^n)$ to \eqref{eq.LS} associated with the data above, such that 
$$
\lambda^n=\sup_{t\in [t_0^n,T]} \|\rho^n(t)\|_{(C^{2+\delta})'}\geq n.
$$ 
It follows that  $(\tilde z^n, \tilde \rho^n)= \frac{1}{\lambda^n}(z^n, \rho^n)$ solves the system 
\be\label{pierre}
\left\{ \begin{array}{l}
-\partial_t \tilde z^n -\Delta \tilde z^n +V^n(t,x)\cdot D\tilde z^n = \dfrac{\delta F}{\delta m}(x,m^n(t))(\tilde \rho^n(t))+\dfrac{1}{\lambda^n}R_1^n(t,x), \\[2.5mm]
\partial_t\tilde \rho^n-\Delta \tilde \rho^n -{\rm div}(V^n\tilde \rho^n) -\sigma^n{\rm div}(m\Gamma^n D\tilde z^n) = \dfrac{1}{\lambda^n}\sigma^n {\rm div}(R_2^n),\\[2.5mm]
\tilde \rho^n(t_0^n)= \dfrac{1}{\lambda^n}\sigma^n\xi^n \ \ \text{and} \ \  \tilde z^n(T,x)= \dfrac{\delta G}{\delta m}(x,m^n(T))(\tilde \rho^n(T))+\dfrac{1}{\lambda^n}R_3^n.
\end{array}\right.
\ee

Since, by definition, $\sup_t \|\tilde \rho^n(t)\|_{(C^{2+\delta})'}= 1$, Lemma \ref{lem.apriori-Appen} implies that the $(\tilde z^n)$'s and $\tilde \rho^n(t,\cdot)$'s are bounded in $C^{(2+\delta)/2,2+\delta}$ and $C^{\delta/2}([0,T], (C^{2+\delta})')$ respectively. 
\vs

Hence,  we may  assume that,  up to a subsequence, the sequences  $(\sigma^n)_{n\in \N}$, $(\tilde z^n)_{n\in \N}$, $(\tilde \rho^n)_{n\in \N}$ and $(V^n)_{n\in \N}$ converge respectively  to some $\sigma\in [0,1]$,  $\tilde z \in C^{1,2}_{loc}$, $\tilde \rho \in C^0([t_0,T], (C^{2+\delta}_c)')$, where $(C^{2+\delta}_c)'$ is endowed with the weak-$\ast$ topology, and  $V \in C^0([t_0,T], C^{2+\delta}_{loc})$. 
\vs

The goal  is to show that $(\tilde z,\tilde \rho)$ is a nonzero solution to the homogenous equation \eqref{eq.LSH}, which will contradict the strong stability of the system. 
\vs
There are two difficulties that  need to be addressed both caused by the above claimed  weak convergence of the $\tilde \rho^n$'s to $\tilde \rho$.
\vs

The first   is to prove that, as $n\to \infty$,
$$\dfrac{\delta F}{\delta m}(x,m^n(t))(\tilde \rho^n(t)) \to \dfrac{\delta F}{\delta m}(x,m(t))(\tilde \rho(t)) \;\text{and} \; \dfrac{\delta G}{\delta m}(x,m^n(T))(\tilde \rho^n(T)) \to \dfrac{\delta G}{\delta m}(x,m(T))(\tilde \rho(T),$$ 
\vs
and the second  is to show that, since $\sup_t \|\tilde \rho^n(t)\|_{(C^{2+\delta})'}=1$, we must have    $(\tilde z,\tilde \rho)$ is nonzero.  

\vs
To overcome these two issues it is necessary to upgrade the convergence of the $\tilde \rho^n$'s  to  $\tilde \rho$ in \\$C^0([t_0,T], (C^{2+\delta})')$ from weak to strong. 
\vs
We first note that $\tilde \rho^n=\tilde \rho^{n,1}+\tilde \rho^{n,2}$ with  $\tilde \rho^{n,1}$ and $\tilde \rho^{n,2}$ solving respectively 
$$
\partial_t\tilde \rho^{n,1}-\Delta \tilde  \rho^{n,1} -{\rm div}(V^n\tilde  \rho^{n,1})  = \frac{1}{\lambda^n}\sigma^n {\rm div}(R_2^n)  \ \ \text{and} \ \ 
 \rho^{n,1}(t_0)= \frac{1}{\lambda^n}\sigma^n\xi^n,
$$
and 
\be\label{takis38}
\partial_t\tilde  \rho^{n,2}-\Delta \tilde  \rho^{n,2} -{\rm div}(V^n\tilde  \rho^{n,2})  -\sigma^n{\rm div}(\Gamma^n D\tilde z^nm^n) = 0 \ \ \text{and} \ \ 
\tilde  \rho^{n,2}(t_0)= 0.
 \ee
 \vs
We show next that  $\sup_t \|\tilde \rho^{n,1}(t)\|_{(C^{2+\delta})'} \to 0$. Indeed, for fixed  $\bar t\in (t_0^n,T]$ and $\bar \psi\in C^{2+\delta}$, let $\psi^n$ be the solution to the dual problem
\be\label{eq.dualeqBIS}
\left\{\begin{array}{l}
-\partial_t \psi^n -\Delta \psi^n + V^n(t,x) \cdot D\psi^n = 0\ \  {\rm in}\ \ (t_0^n,\bar t)\times \R^d,\\
\psi^n(\bar t)= \bar \psi\ \  {\rm in}\ \  \R^d.
\end{array}\right.
\ee
It follows from the standard parabolic regularity that $\psi^n$ is bounded in $C^{(2+\delta)/2,2+\delta}$ by $C\|\bar \psi\|_{C^{2+\delta}}$ with  $C$ is independent of $n$ and $\bar t$, and since, in view of  the duality, we have 
$$
\lg \tilde \rho^{n,1}(\bar t), \bar \psi\rg =  \frac{1}{\lambda^n}  \big( \sigma^n \lg  \xi^n,  \psi^n(t_0^n) \rg -\sigma^n  \int_{t_0^n}^{\bar t}\lg R^{2,n}, D\psi^n\rg dt \big),
$$
we obtain 
$$
\sup_{t\in [t_0^n,T]} \|\tilde \rho^{n,1}(t)\|_{(C^{2+\delta})'} \leq  \frac{C}{\lambda^n} \big (\|\xi^n\|_{(W^{1,\infty})'} + \sup_{t\in [t_0^n,T]}\|R^{2,n}(t)\|_{(W^{1,\infty})'}\big ).
$$
Hence, $\sup_{t\in [t_0^n,T]} \|\tilde \rho^{n,1}(t)\|_{(C^{2+\delta})'} \to 0$ as $\dfrac{1}{\lambda^n} (\|\xi^n\|_{(W^{1,\infty})'} + \sup_{t\in [t_0^n,T]} \|R^{2,n}(t)\|_{(W^{1,\infty})'}) \to  0$. 
\vs
 
It follows from \eqref{takis38} that,  for any $\bar t^n\in [t_0^n,T]$ and ${\bar \psi}^n\in C^{2+\delta}$, if $\psi^n$ is the solution to \eqref{eq.dualeqBIS} on $[t_0^n,\bar t^n]$ with terminal condition $\psi^n(\bar t^n)={\bar \psi}^n$, then 
\be\label{repeho2n}
\lg \tilde \rho^{n,2}(\bar t^n), {\bar \psi}^n\rg =  - \sigma^n  \int_{t_0^n}^{\bar t^n}\int_{\R^d} \Gamma^n D\tilde z^n\cdot D\psi^n m^n. 
\ee
In order to prove the uniform convergence of the $ \tilde \rho^{n,2}$'s  in $(C^{2+\delta})'$, we assume that the $\|{\bar \psi}^n\|_{C^{2+\delta}}$'s  are bounded and, without loss of generality, that the $\bar t^n$'s and ${\bar \psi}^n$'s converge respectively to $\bar t\in [t_0,T]$ and $\bar \psi\in  C^{2+\delta}$,  the last convergence being in $C^{2+\delta_1}_{loc}$ for any $\delta_1\in (0,\delta)$. We need to prove that the $(\lg \tilde \rho^{n,2}(\bar t^n), {\bar \psi}^n\rg)$'s converge to $\lg \tilde \rho^{2}(\bar t), {\bar \psi}\rg$, where $\tilde \rho^2$ is the solution to 
$$
\partial_t\tilde  \rho^2-\Delta \tilde  \rho^2 -{\rm div}(V\tilde  \rho^2)  -\sigma{\rm div}(\Gamma D\tilde zm) = 0 \ \ \text{and} \ \ 
\tilde  \rho^2(t_0)= 0.
$$
Note that 
$$
\lg \tilde \rho^{2}(\bar t), {\bar \psi}\rg =  - \sigma  \int_{t_0}^{\bar t}\int_{\R^d} \Gamma D\tilde z\cdot D\psi m, 
$$
where $\psi$ is the solution to 
\[
-\partial_t \psi -\Delta \psi + V(t,x) \cdot D\psi = 0\ \  {\rm in}\ \ (t_0,\bar t)\times \R^d \ \ \text{and} \ \ \psi(\bar t)= \bar \psi\ \  {\rm in}\ \  \R^d,
\]
and recall that the $\Gamma^n$'s  and $D\tilde z^n$'s  are bounded and  converge locally uniformly to $\Gamma$ and to $D\tilde z$ respectively. 
\vs
Similarly, due to the parabolic regularity, the $\psi^n$'s are  bounded in $C^{(2+\delta)/2,2+\delta}$ and the  $D\psi^n$'s  converge locally uniformly to $D\psi$. 
\vs

Moreover, since   $m^n$ is the solution to  
 $$
 \partial_t m^n-\Delta m^n+{\rm div}(V^nm^n)=0 \ \ \text{in} \ \ (t_0^n, T] \ \ \text{and} \ \ m^n( t_0^n,\cdot)=m^n_0 \ \ \text{in} \ \R^d,
 $$
with $V^n$ uniformly bounded, we know that  the $m^n$'s  converge uniformly to $m$ in $\Pk$, and we have the  second-order moment estimate
$$
\sup_{t,n} \int_{\R^d}|x|^2 m^n(t,dx)\leq C\sup_{n} \int_{\R^d}|x|^2 m^n_0(dx) \leq C.
$$
In addition, using that $\Gamma D\tilde z\cdot D\psi$ is globally Lipschitz, for any $R\geq 1$ we find 
\begin{align*}
\left| \lg \tilde \rho^{n,2}(\bar t^n), {\bar \psi}^n\rg- \lg \tilde \rho^{2}(\bar t), {\bar \psi}\rg\right| & \leq 
C(|\sigma^n-\sigma|+|\bar t^n-\bar t|+|t_0^n-t_0|) \\
&\qquad + \sigma  \left| \int_{t_0\vee t_0^n}^{\bar t\wedge \bar t^n}\int_{\R^d} (\Gamma D\tilde z\cdot D\psi m-
\Gamma^n D\tilde z^n\cdot D\psi^n m^n)\right|  \\
&\leq C(|\sigma^n-\sigma|+|\bar t^n-\bar t|+|t_0^n-t_0|+\sup_t {\bf d}_1(m^n(t),m(t))+R^{-2}) \\
&\qquad + \sigma  \left| \int_{t_0\vee t_0^n}^{\bar t\wedge \bar t^n}\int_{B_R} (\Gamma D\tilde z\cdot D\psi -
\Gamma^n D\tilde z^n\cdot D\psi^n) m^n\right| .
\end{align*}
Letting $n\to \infty$ and then $R\to \infty$ proves the convergence of the  $\lg \tilde \rho^{n,2}(\bar t^n), {\bar \psi}^n\rg $'s  to $\lg \tilde \rho^{2}(\bar t), {\bar \psi}\rg$. It follows that  the sequence $(\tilde \rho^n)_{n\in \N}$
converges to $\tilde \rho=\tilde \rho^2$  strongly in $C^0([t_0,T], (C^{2+\delta})')$.
\vs

To summarize the above, we know that the sequences  $(\sigma^n)_{n\in \N}$, $(\tilde z^n)_{n\in \N}$, $(\tilde \rho^n)_{n\in \N}$ and $(V^n)_{n\in \N}$ converge respectively  to  $\sigma\in [0,1]$,  $\tilde z$ in $C^{1,2}_{loc}$, $\tilde \rho$ in $C^0([0,T], (C^{2+\delta})')$ and $V$ in $C^0([t_0,T], C^{2+\delta}_{loc})$. 
\vs
Passing to the limit in \eqref{pierre} we infer that  $(\tilde z,\tilde \rho)$ is a  solution to the homogenous equation \eqref{eq.LSH}. 
\vs

 Since  $\sup_{t\in [t_0^n,T]} \|\rho^n(t)\|_{(C^{2+\delta})'}=1$ for any $n$, it follows  that $\sup_{t\in [t_0,T]} \|\rho(t)\|_{(C^{2+\delta})'}=1$. Thus  $(\tilde z,\tilde \rho)$ is a nonzero solution to the homogenous equation \eqref{eq.LSH} which  contradicts the strong stability assumption \eqref{takis31}.

 \vs
The second part of the proof consists in upscaling the regularity obtained in the first part. For this, we let 

$(V', t_0',m_0', \Gamma', R^{1,'}, R^{2,'}, R^{3,'}, \xi',\sigma')$ be such that \eqref{cond.data-Appen} holds and $(z',\rho')$ be a solution to \eqref{eq.LS} associated with these data, where $m'$ is the solution to \eqref{eq.m-Appen} with drift $V'$ and initial condition $m_0'$ at time $t_0'$.
\vs

We have already established that, for the  $M'$ in \eqref{def.Mprime-Appen}, 
$$
\sup_{t\in [t_0',T]} \|\rho'(t)\|_{(C^{2+\delta})'}\leq CM'. 
$$
It then follows from Lemma \ref{lem.apriori-Appen} that 
$$
\|z'\|_{C^{(2+\delta)/2,2+\delta}}+ \sup_{t'\neq t} \frac{\|\rho'(t',\cdot)-\rho'(t,\cdot)\|_{C^{2+\delta}}}{|t'-t|^{\delta/2}} \leq C(M'+ \sup_{t\in [t_0',T]} \|\rho'(t)\|_{(C^{2+\delta})'})\leq CM'.
 $$
\end{proof}

\subsection*{The stability property} We discuss here the notion of stabllity of a solution $(u,m)$ of the MFG-system arising in MFC. 
\vs

Let $(t_0, m_0)\in [0,T]\times \Pw$ and  $(m, \alpha)$ be a minimizer for $\mathcal U(t_0,m_0)$ with associated multiplier $u$,  that is,  the pair $(u,m)$ solves \eqref{MFG} and $\alpha(t,x)=-H_p(x,Du(t,x))$.

\begin{defn} The solution $(u,m)$ is strongly stable (resp. stable), if for all $\sigma\in [0,1]$ (resp. $\sigma=1$) the only solution $(z,\mu)\in C^{(1+\delta)/2, 1+\delta} \times  C^0([t_0,T]; (C^{2+\delta}(\R^d))')$ to
 the linearized system 
\be\label{MFGL}
\left\{\begin{array}{l}
\ds -\partial_t z -\Delta z +H_p(x,Du)\cdot Dz= \frac{\delta F}{\delta m}(x,m(t))(\mu(t)) \ \ {\rm in} \ \  (t_0,T)\times \R^d,\\[2mm]
\ds \partial_t \mu -\Delta \mu - {\rm div}(H_p(x,Du)\mu)-\sigma{\rm div}(H_{pp}(x,Du)Dz m)=0 \ \  {\rm in} \ \  (t_0,T)\times \R^d,\\[2mm]
\ds \mu(t_0)=0 \ \ \text{and} \ \  z(T, x)=\frac{\delta G}{\delta m}(x,m(T))(\mu(T)) \ \ {\rm in} \ \   \R^d,
\end{array}\right.
\ee
is the pair $(z,\mu)=(0,0)$.
\end{defn} 
Since, given a minimizer $(m,\alpha)$, the relation $\alpha= -H_p(x,Du)$ defines $Du$ uniquely, the stability condition depends on $(m,\alpha)$ only. We say that the minimizer $(m,\alpha)$ is strongly stable (resp. stable) if $(u,m)$ is strongly stable (resp. stable). 
\vs

The above makes also clear the definition of the regularity set $\ms O$ in \eqref{takis20}.  We remark that at this point we do not know whether $\ms O$ is a nonempty set. This will follow from Lemma~\ref{thm.Oopendense} below.
\vs

We also note  that \eqref{MFGL} is the linearized system studied in the previous subsection for the particular choice of vector field $V(t,x)= H_p(x,Du(t,x))$ and matrix $\Gamma(t,x)=H_{pp}(x,Du(t,x))$. To emphasize that we are working with this particular system and also be consistent with other references, heretofore we use the notation $(z,\mu)$ instead of $(z,\rho).$
\vs

The following lemma asserts that the minimizers starting from an initial condition in $\mathcal O$ are actually strongly stable. 

\begin{lem}\label{lem.StrongStab} Assume \eqref{ass.main}, fix $(t_0,m_0)\in \mathcal O$ and let $(m,\alpha)$ be the unique stable minimizer associated to $\mathcal U(t_0,m_0)$. Then $(m,\alpha)$ is strongly stable. 
\end{lem}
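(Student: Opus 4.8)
The plan is to propagate the stability hypothesis — which is exactly the case $\sigma=1$ of the linearized system \eqref{MFGL} — to every $\sigma\in[0,1]$, by playing the adjoint ``energy identity'' of \eqref{MFGL} against the second-order optimality condition of Lemma~\ref{lem.secondordercond}. Concretely, I would fix $\sigma\in[0,1]$ and a solution $(z,\mu)$ of \eqref{MFGL}, upgrading $z$ to $C^{(2+\delta)/2,2+\delta}$ by Lemma~\ref{lem.apriori-Appen} (so that $Dz$ is bounded and continuous and $z(t)$ can be paired with $\mu(t)\in(C^{2+\delta})'$). The two endpoints are immediate: $\sigma=1$ is precisely the assumed stability of $(m,\alpha)$, since $(t_0,m_0)\in\mathcal O$; and for $\sigma=0$ the $\mu$-equation is the homogeneous Fokker--Planck equation with $\mu(t_0)=0$, so $\mu\equiv0$ by duality with the backward equation $-\partial_t\psi-\Delta\psi+H_p(x,Du)\cdot D\psi=0$, and then $z$ solves $-\partial_t z-\Delta z+H_p(x,Du)\cdot Dz=0$ with $z(T)=0$, hence $z\equiv0$. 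So the work is confined to the range $\sigma\in(0,1)$.

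Write $b(t,x)=H_p(x,Du(t,x))$ and $A(t,x)=H_{pp}(x,Du(t,x))$. I would first derive the energy identity by differentiating $t\mapsto\langle\mu(t),z(t)\rangle$ using the two equations of \eqref{MFGL} — the same duality computation as in the proof of Lemma~\ref{lem.apriori-Appen} — and integrating over $[t_0,T]$; using $\mu(t_0)=0$ and $z(T)=\frac{\delta G}{\delta m}(x,m(T))(\mu(T))$ this yields
\[
\Big\langle\frac{\delta G}{\delta m}(\cdot,m(T),\cdot),\mu(T)\otimes\mu(T)\Big\rangle+\int_{t_0}^T\Big\langle\frac{\delta F}{\delta m}(\cdot,m(t),\cdot),\mu(t)\otimes\mu(t)\Big\rangle dt=-\sigma\int_{t_0}^T\!\!\int_{\R^d}A\,Dz\cdot Dz\,m\,dt.
\]
Next I would apply Lemma~\ref{lem.secondordercond} with the perturbation $\beta:=-\sigma A\,Dz=-\sigma H_{pp}(x,Du)Dz$, which is bounded and continuous, hence admissible. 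The function $\rho$ solving \eqref{eq.forsecondorder} for this $\beta$ (with $\alpha=-H_p(x,Du)$) solves $\partial_t\rho-\Delta\rho-{\rm div}(b\rho)-\sigma\,{\rm div}(A\,Dz\,m)=0$ with $\rho(t_0)=0$, i.e.\ exactly the $\mu$-equation, so $\rho=\mu$ by uniqueness for this Fokker--Planck equation. Since $L$ is the Legendre transform of $H$ in the second variable and $\alpha(t,x)=-H_p(x,Du(t,x))$, one has $L_{\alpha\alpha}(x,\alpha(t,x))=H_{pp}(x,Du(t,x))^{-1}=A^{-1}$, whence $L_{\alpha\alpha}(x,\alpha)\beta\cdot\beta=\sigma^2 A\,Dz\cdot Dz$, and \eqref{ineq.SOC} becomes
\[
\sigma^2\int_{t_0}^T\!\!\int_{\R^d}A\,Dz\cdot Dz\,m\,dt+\int_{t_0}^T\Big\langle\frac{\delta F}{\delta m},\mu(t)\otimes\mu(t)\Big\rangle dt+\Big\langle\frac{\delta G}{\delta m},\mu(T)\otimes\mu(T)\Big\rangle\ge0.
\]
Using the energy identity to replace the $\mathcal F$- and $\mathcal G$-terms by $-\sigma\int_{t_0}^T\int_{\R^d}A\,Dz\cdot Dz\,m\,dt$ leaves $\sigma(\sigma-1)\int_{t_0}^T\int_{\R^d}A\,Dz\cdot Dz\,m\,dt\ge0$.

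For $\sigma\in(0,1)$ the prefactor $\sigma(\sigma-1)$ is strictly negative, whereas $A(t,x)=H_{pp}(x,Du(t,x))\ge c I_d$ for some $c>0$ by the strict convexity \eqref{convex} and the uniform bound on $Du$ from Lemma~\ref{lem.reguum}; hence $\int_{t_0}^T\int_{\R^d}|Dz|^2\,m\,dt=0$, i.e.\ $Dz=0$ $m$-a.e.\ on $(t_0,T)\times\R^d$. Then ${\rm div}(A\,Dz\,m)=0$ in the sense of distributions, so $\mu$ again solves the homogeneous Fokker--Planck equation with zero initial datum and is therefore $\equiv0$; and then $z$ solves $-\partial_t z-\Delta z+H_p(x,Du)\cdot Dz=0$ with $z(T)=0$ and is $\equiv0$ by the maximum principle. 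This shows that \eqref{MFGL} admits only the trivial solution for every $\sigma\in[0,1]$, which is precisely strong stability of $(m,\alpha)$. I expect the only delicate point to be the otherwise routine step of making the pairing $\langle\mu(t),z(t)\rangle$ and its time derivative rigorous when $\mu$ is merely a time-continuous distribution in $(C^{2+\delta})'$ — this I would handle exactly as in the proof of Lemma~\ref{lem.apriori-Appen}, using that $z\in C^{(2+\delta)/2,2+\delta}$ is an admissible test function; the rest is the algebraic matching of the quadratic form in \eqref{ineq.SOC} with the one coming from the adjoint identity.
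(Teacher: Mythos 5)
Your proof is correct and follows essentially the same strategy as the paper: the energy identity from pairing $z$ with $\mu$ combined with the second-order optimality condition \eqref{ineq.SOC} applied with $\beta=-\sigma H_{pp}(x,Du)Dz$ and $\rho=\mu$, yielding $\sigma(\sigma-1)\int_{t_0}^T\int_{\R^d}H_{pp}Dz\cdot Dz\,m\le 0$ and hence $Dz\,m=0$ and $(z,\mu)=(0,0)$. In fact you tighten two small points in the paper's write-up: you treat $\sigma=0$ separately (the prefactor $\sigma-\sigma^2$ vanishes there, so the quadratic inequality alone is uninformative, and the direct Fokker--Planck/duality argument you give is what is needed), and you invoke Lemma~\ref{lem.apriori-Appen} rather than Lemma~\ref{lem.estLS-Appen} for the upgrade of $z$ to $C^{(2+\delta)/2,2+\delta}$, which is the right choice since Lemma~\ref{lem.estLS-Appen} already presupposes the strong stability being proved.
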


\begin{proof} Since, if $\sigma=1$, the claim is just the assumed stability of $(m,\alpha)$, in what follows we assume that   $$\sigma\in [0,1).$$

It follows from  Lemma \ref{lem.estLS-Appen} that $z \in C^{(2+\delta)/2,2+\delta}$, while using the duality, we have, for any $t\in [t_0,T]$,  
$$
\lg z(t,\cdot),\mu(t)\rg= -\int_{t_0}^t (\int_{\R^d} \Bigl(\sigma H_{pp}(x,Du) Dz\cdot dz m dx) + \lg \frac{\delta F}{\delta m}(\cdot,m(t))(\mu(t)),\mu(t)\rg\Bigr)dt,
$$
and, in particular, for  $t=T$, we get 
\be\label{ukzajenrsd}
\begin{split}
&\int_{t_0}^T \Bigl(\int_{\R^d} \sigma H_{pp}(x,Du) Dz\cdot Dz m dx + \lg \frac{\delta F}{\delta m}(\cdot,m(t))(\mu(t)),\mu(t)\rg \Bigr)dt\\
& \qquad  + 
\lg  \frac{\delta G}{\delta m}(\cdot,m(T))(\mu(T)),\mu(T)\rg =0.
\end{split}
\ee
Using that  $(m,\alpha)$ is a minimizer as well as  the second-order condition \eqref{ineq.SOC} with $(\rho, \beta)= (\mu, \sigma H_{pp}(x,Du)Dz)$ and recalling that $L_{\alpha,\alpha}(x,\alpha(t,x))H_{pp}(x,Du(t,x))= Id$, we get  
\begin{align*}
& \int_{t_0}^T \int_{\R^d} \Bigl(\sigma^2 H_{pp}(x,Du) Dz\cdot Dzm \\  
& \qquad +\int_{\R^d} \frac{\delta F}{\delta m}(x,m(t),y)\mu(t,x)\mu(t,y)dy\Bigr)dxdt + \int_{\R^{2d}} \frac{\delta G}{\delta m}(x,m(T),y)\mu(T,x)\mu(T,y)dydx\geq 0,
\end{align*}
and, in view of   \eqref{ukzajenrsd}, 
$$
(\sigma -\sigma^2) \int_{t_0}^T \int_{\R^d}  H_{pp}(x,Du) Dz\cdot Dz\ m\leq 0. 
$$
Since $H_{pp}>0$ and $\sigma -\sigma^2>0$, the last inequality yields  $Dz\ m=0,$ from which we easily conclude, going back to the equations satisfied by $\mu$ and by $z$,  that $(z, \mu)=(0,0)$.
\end{proof}

We turn next to $\ms O$. The next lemma  establishes an important property together with the fact $\mathcal O$ is not empty. 
A similar statement is proved in \cite{BrCa} when the state space is the torus. The adaptation to the whole space is given here for the sake of completeness.

\begin{lem}\label{lem.Odense} Assume \eqref{ass.main}. Fix  $(t_0,m_0)\in [0,T)\times \Pw$ and let $(m,\alpha)$ be a minimizer for $\mathcal U(t_0,m_0)$. Then $(t,m(t))$ belongs to $\mathcal O$ for any $t\in (t_0,T)$.
\end{lem}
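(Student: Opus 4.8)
The plan is to show that for any minimizer $(m,\alpha)$ for $\mathcal U(t_0,m_0)$ and any intermediate time $t_1 \in (t_0,T)$, the restriction of $(m,\alpha)$ to $[t_1,T]$ is the unique stable minimizer for $\mathcal U(t_1,m(t_1))$, so that $(t_1,m(t_1)) \in \mathcal O$. There are two things to establish: first, that $(m|_{[t_1,T]},\alpha|_{[t_1,T]})$ is actually a \emph{stable} minimizer, and second, that it is the \emph{unique} minimizer.

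\textbf{Stability.} Let $(u,m)$ solve \eqref{MFG} on $[t_0,T]$ with $\alpha=-H_p(x,Du)$. The restriction $(u,m)$ to $[t_1,T]$ still solves \eqref{MFG} with initial data $m(t_1)$ and satisfies the second-order optimality condition of Lemma~\ref{lem.secondordercond} on $[t_1,T]$ (for test perturbations $\beta$ vanishing near $t_1$). I would argue that stability on $[t_1,T]$ (i.e.\ the case $\sigma=1$ of \eqref{MFGL}) follows from the strict convexity of $H$ in $p$ together with \eqref{ineq.SOC}, exactly as in the proof of Lemma~\ref{lem.StrongStab}: given a solution $(z,\mu)$ to \eqref{MFGL} on $[t_1,T]$ with $\sigma=1$ and $\mu(t_1)=0$, duality between the $z$-equation and the $\mu$-equation yields the identity
\be\label{pf.Odense.1}
\int_{t_1}^T \Bigl( \int_{\R^d} H_{pp}(x,Du)Dz\cdot Dz\, m\, dx + \lg \tfrac{\delta F}{\delta m}(\cdot,m(t))(\mu(t)),\mu(t)\rg\Bigr) dt + \lg \tfrac{\delta G}{\delta m}(\cdot,m(T))(\mu(T)),\mu(T)\rg = 0,
\ee
while the second-order condition \eqref{ineq.SOC} applied with $\rho=\mu$, $\beta=H_{pp}(x,Du)Dz$ (which vanishes near $t_1$ since $\mu(t_1)=0$ forces, via the $\mu$-equation run backward, $Dz\,m = 0$ near $t_1$; more carefully one uses a cutoff and takes limits) gives the same expression is $\geq 0$ with the quadratic term $\int H_{pp}Dz\cdot Dz\,m$ replaced by itself, hence forces $Dz\,m \equiv 0$ and then $(z,\mu)=(0,0)$. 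This is the delicate point and I expect it to require the same care with the localization near $t_1$ as in the Appendix proof of Lemma~\ref{lem.secondordercond}.

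\textbf{Uniqueness.} Suppose $(\tilde m,\tilde\alpha)$ is another minimizer for $\mathcal U(t_1,m(t_1))$. Glue: the control equal to $\alpha$ on $[t_0,t_1]$ and $\tilde\alpha$ on $[t_1,T]$ produces an admissible competitor for $\mathcal U(t_0,m_0)$ whose cost equals $\mathcal U(t_0,m_0)$ (by dynamic programming / optimality of $(m,\alpha)$ and of $(\tilde m,\tilde\alpha)$), hence is itself a minimizer for $\mathcal U(t_0,m_0)$, call it $(\bar m,\bar\alpha)$, with associated multiplier $\bar u$ solving \eqref{MFG} on $[t_0,T]$. Now I would use the strong stability of $(m,\alpha)$ on $[t_0,T]$ --- but wait, that is not assumed here ($(t_0,m_0)$ need not be in $\mathcal O$). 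Instead the correct route, as indicated in the introduction's remark about "a result of Lions--Malgrange-type", is a \emph{backward uniqueness} argument for the MFG system with both endpoints prescribed: $(u,m)$ and $(\bar u,\bar m)$ agree on $[t_0,t_1]$ in the $m$-component hence have the same $m(t_1)$; one shows the difference $(\bar u - u, \bar m - m)$ on $[t_0,t_1]$ solves a linear backward-forward system with $\bar m - m$ vanishing at \emph{both} $t_0$ and $t_1$, and a Lions--Malgrange / logarithmic-convexity argument forces it to vanish on all of $[t_0,t_1]$, in particular $\bar u(t_1) = u(t_1)$; since $\bar u(t_1)=u(t_1)$ and $\bar m(t_1)=m(t_1)$ are both Cauchy data at $t_1$ for \eqref{MFG} read forward on $[t_1,T]$, and that forward problem (HJB backward + FP forward with the coupling frozen through $Du$, $u$ determined by its own terminal condition) has a unique solution given the pair $(u(t_1,\cdot),m(t_1))$, we get $\bar u = u$ and $\bar m = m$ on $[t_1,T]$, i.e.\ $(\tilde m,\tilde\alpha)=(m,\alpha)$ on $[t_1,T]$.

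\textbf{Main obstacle.} The hard part is the uniqueness half, specifically the Lions--Malgrange-type backward uniqueness for the linearized MFG system on $[t_0,t_1]$ with the adjoint variable pinned at both time endpoints; establishing the requisite log-convexity (or Carleman) estimate in the unbounded domain $\R^d$, with only the a priori bounds from Lemma~\ref{lem.reguum} available, is where the real work lies. The stability half is essentially a localized rerun of the argument already used for Lemma~\ref{lem.StrongStab}, so I expect it to be routine modulo the cutoff bookkeeping near $t_1$.
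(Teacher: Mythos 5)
Your overall plan (split into uniqueness plus stability, exploit the gluing/DPP, and use a Lions--Malgrange-type argument) matches the paper's proof in broad outline, but both halves of your argument contain genuine gaps that the paper closes in a different way.

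\textbf{Uniqueness.} You glue $(\alpha,\tilde\alpha)$ to get a new minimizer $(\bar m,\bar\alpha)$ for $\mathcal U(t_0,m_0)$ and then try to run a backward-uniqueness / log-convexity argument on $[t_0,t_1]$ using that $\bar m-m$ ``vanishes at both $t_0$ and $t_1$.'' This does not close. On $[t_0,t_1]$ you have $\bar m\equiv m$ identically, so the difference $\bar u-u$ satisfies the homogeneous linear backward equation there, and its behavior is governed entirely by the terminal datum $\bar u(t_1,\cdot)-u(t_1,\cdot)$, which at this stage of your argument is unconstrained; no backward-uniqueness statement will conclude that it vanishes. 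The paper's key step, which you skip, is this: $(\hat m,\hat\alpha)$ is a minimizer for $\mathcal U(t_0,m_0)$, hence (Lemma~\ref{lem.reguum}) admits a multiplier $\hat u\in C^{2+\delta/2,4+\delta}$, so $\hat\alpha=-H_p(x,D\hat u)\in C^{(1+\delta)/2,1+\delta}$ is \emph{continuous in time}. Since $\hat\alpha=\alpha$ on $[t_0,t_1)$ and $\hat\alpha=\tilde\alpha$ on $[t_1,T]$, continuity at $t_1$ forces $\alpha(t_1,\cdot)=\tilde\alpha(t_1,\cdot)$, hence $Du(t_1,\cdot)=D\tilde u(t_1,\cdot)$. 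Only then is Lions--Malgrange invoked, and it is applied \emph{forward from $t_1$} on $[t_1,T]$: the pair $((\partial_{x_k}(u-\tilde u))_k,\,m-\tilde m)$ has zero initial data at $t_1$ and solves a coupled linear system of the form \eqref{eq.systLM}, and forward uniqueness gives that it vanishes.

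\textbf{Stability.} You claim the $\sigma=1$ stability follows ``exactly as in the proof of Lemma~\ref{lem.StrongStab},'' by combining the duality identity \eqref{pf.Odense.1} with \eqref{ineq.SOC} and concluding $Dz\,m\equiv 0$. But Lemma~\ref{lem.StrongStab}'s argument hinges on $\sigma-\sigma^2>0$: there, the second-order condition is tested against $\beta=\sigma H_{pp}Dz$, producing a quadratic term $\sigma^2\int H_{pp}Dz\cdot Dz\,m$, which, subtracted from the identity's $\sigma\int H_{pp}Dz\cdot Dz\,m$, gives the strict coercivity. At $\sigma=1$ the two quadratic terms cancel and you obtain no information; the identity and the inequality are the same statement. (Your side remark that ``$\mu(t_1)=0$ forces, via the $\mu$-equation run backward, $Dz\,m=0$ near $t_1$'' is also incorrect: $\mu(t_1)=0$ is a forward initial condition; the forcing ${\rm div}(m\,H_{pp}Dz)$ is exactly what drives $\mu$ away from zero.) The paper's actual argument is more subtle. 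Because the identity shows $\tilde J(\bar\beta)=0$ (where $\bar\beta$ is the extension by zero to $[t_0,t_1)$ of $-H_{pp}(x,Du)Dz$), and the second-order condition says $\tilde J\geq 0$, the control $\bar\beta$ is a \emph{minimizer} of $\tilde J$. Regularity of minimizers of the accessory problem then implies $\bar\beta$ is continuous, and since $\bar\beta\equiv 0$ on $[t_0,t_1)$ this forces $Dz(t_1,\cdot)=0$. With $Dz(t_1,\cdot)=0$ and $\mu(t_1)=0$ in hand, one differentiates the $z$-equation in space and applies the same Lions--Malgrange argument to the system for $((\partial_{x_k}z)_k,\mu)$ on $[t_1,T]$ to get $(z,\mu)=(0,0)$. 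None of these steps (minimizer regularity, $Dz(t_1,\cdot)=0$, the second Lions--Malgrange pass) appear in your proposal.

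So the ``main obstacle'' you identify (backward uniqueness on $[t_0,t_1]$ with the measure pinned at both ends, in unbounded space) is not the obstacle the paper faces or resolves; the Lions--Malgrange argument the paper uses is a forward-uniqueness statement on $[t_1,T]$ for a system with both components starting from zero, which is a different and more tractable problem.
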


\begin{proof} Fix $(t_0,m_0)\in [0,T)\times \Pw$, and let $(m,\alpha)$ be a minimizer for $\mathcal U(t_0,m_0)$ and  $u$ its associated multiplier. 
\vs
For  $t_1\in (t_0,T)$, set $m_1=m(t_1)$ and let $(\tilde m, \tilde \alpha)$ be an optimal solution for $\mathcal U(t_1,m_1)$ with associated multiplier $\tilde u$. Since, in view of the dynamic programming principle, 
$$
(\hat m, \hat \alpha) = \left\{\begin{array}{ll}
(m,\alpha) & {\rm on }\; [t_0,t_1)\times \R^d,\\[1.5mm]
(\tilde m, \tilde \alpha) & {\rm on}\; [t_1,T]\times \R^d,
\end{array}
\right.
$$

is  optimal for $\mathcal U(t_0,m_0)$, we know from Lemma~\ref{lem.reguum} that $\hat \alpha \in C^{(1+\delta)/2, 1+\delta}$. It follows  that $\alpha(t_1,\cdot)= \tilde \alpha(t_1,\cdot)$ and thus that $Du(t_1,\cdot)= D\tilde u(t_1,\cdot)$. Thus, the pair $((z^k)_{k=1, \dots, d},\mu)= ((\partial_{x_k}(u-\tilde u))_{k=1, \dots, d}, m-\tilde m)$ solves the system 
\be\label{eq.systLM}
\begin{split}
& -\partial z^k -\Delta z^k +g^k(t,x)= 0 \ \ {\rm in}\ \  (t_1,T)\times \R^d,\\ 
& \partial \mu -\Delta \mu +{\rm div}(h)= 0  \ \ {\rm in} \ \ (t_1,T)\times \R^d,\\
& \mu(t_1)=0 \ \ z^k(t_1, \cdot)= 0  \ \ {\rm in}\ \ \R^d,
\end{split}
\ee

where 
\begin{align*}
g^k(t,x)& =  H_{x_k}(x,Du)-H_{x_k}(x,D\tilde u)+H_p(x,Du)\cdot D(\partial_{x_k} u)-H_p(x,D\tilde u)\cdot D(\partial_{x_k}\tilde u)\\
& \qquad\qquad\qquad -F_{x_k}(x,m(t))+F_{x_k}(x,m^1(t)), \\
 h&=H_p(Du)m-H_p(D\tilde u)\tilde m.
\end{align*}
In order to estimate $g^k$ and $h$, we  note that, since $t_1>t_0$, $m, \tilde m \in  C^{1,2}([t_1,T]\times \R^d)$ and $m(t,\cdot)$ and $\tilde m(t,\cdot)$ are bounded in $L^2$. It follows that 
\begin{align} \label{kuqyesrdnfgc}
&\sum_{k=1}^d |g^k(t,x)|^2 \leq C(|z(t,x)|^2+|Dz(t,x)|^2+\|\mu(t)\|_{L^2}^2), \notag\\
&|h(t,x)|^2\leq C (|z(t,x)|^2+|\mu(t,x)|^2),\\[1.2mm]
& |{\rm div}(h(t,x))|^2\leq C (|z(t,x)|^2+|Dz(t,x)|^2+|\mu(t,x)|^2+|D\mu(t,x)|^2). \notag
\end{align}
Then a  Lions-Malgrange-type argument shows that $z_k=\mu=0$, and, hence,  the solution starting from $(t_1,m_1)$ is unique.  We refer to Lions and Malgrange  \cite{LiMa} for the original argument and Cannarsa and Tessitore \cite{CaTe} and \cite{BrCa} for  its adaptation to forward-backward equations. 
\vs
Next we check that this solution is stable. Let $(z,\mu)$ be a solution to \eqref{MFGL}
in  $[t_1,T]\times \R^d$ with $\sigma=1$, which by the standard  parabolic regularity is actually classical. An elementary calculation yields 
\be\label{tildeJDz}
\begin{split}
&\int_{t_1}^T\left(\int_{\R^d} H_{pp}(x,Du(t,x))Dz\cdot Dz\ m dx+\lg  \frac{\delta F}{\delta m}(\cdot,m(t)), \mu(t),\mu(t)\rg \right)dt\\[1.2mm] 
& \hskip1in  +\lg \frac{\delta G}{\delta m}(\cdot,m(t))(\mu(t)),\mu(t) \rg  =0. 
\end{split}
\ee
Using  Lemma~\ref{lem.secondordercond}, we know that, for any $\beta\in L^\infty([t_0,T]\times \R^d;\R^d)$ vanishing near $t=t_0$, if $\rho$ is the solution in the sense of distributions to \eqref{eq.forsecondorder} in $[t_0,T]\times \R^d$, then 
\begin{align*}
\tilde J(\beta) = & \int_{t_0}^T \Bigl( \int_{\R^d} L_{\alpha,\alpha}(x,\alpha(t,x)) \beta(t,x)\cdot   \beta(t,x)m(t,dx) +\lg  \frac{\delta F}{\delta m}(\cdot,m(t),\cdot), \rho(t)\otimes \rho(t)\rg\Bigr)dt \notag\\  
& \qquad + \lg \frac{\delta G}{\delta m}(\cdot,m(T),\cdot), \rho(T,\cdot)\otimes \rho(T,\cdot)\rg\geq 0.
\end{align*}
The solution $\bar \rho$ to \eqref{eq.forsecondorder} associated to the map $\bar \beta$ defined by $\bar \beta =0$ on $[t_0,t_1)$ and $\bar \beta= -H_{pp}(x,Du)Dz$ on $[t_1,T]$ is given by $\bar \rho(t)=0$ on $[t_0,t_1)\times \R^d$ and $\bar \rho(t)= \mu(t)$ on $[t_1,T]\times \R^d$. 
\vs
It then follows from \eqref{tildeJDz} that $\tilde J(\bar \beta)=0$, and, hence, $\bar \beta$ is a minimizer for $\tilde J$, a fact that,  by standard arguments (see, for example,  \cite{BrCa})), implies that $\beta$ is a  continuous map. Thus $Dz(t_1,\cdot)=0$. 
\vs

We differentiate with respect to space variable the first equation in \eqref{MFGL} and obtain that \\$((\partial_{x_k}z)_{k=1,\dots, d}, \mu)$ solves a system of the form \eqref{eq.systLM} with zero initial condition and data $g$ and $h$ satisfying \eqref{kuqyesrdnfgc}. 
This implies, as before,  that $((\partial_{x_k}z)_{k=1,\dots, d}, \mu)=(0,0)$. Coming back to \eqref{MFGL}, we obtain $z=\mu=0$. Therefore the solution is stable. 
\end{proof}

The next theorem establishes the key  properties of $\ms O$.

\begin{thm}\label{thm.Oopendense}  Assume \eqref{ass.main}.  The set $\mathcal O$ is open and dense in $[0,T)\times \Pw$.  
\end{thm}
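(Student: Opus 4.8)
The plan is to read off density from Lemma~\ref{lem.Odense} and to prove openness by a compactness/contradiction argument resting on Lemma~\ref{lem.estLS-Appen}, its corollary, and the stability results of this section. For density, fix $(t_0,m_0)\in[0,T)\times\Pw$ and let $(m,\alpha)$ be any minimizer of $\mathcal{U}(t_0,m_0)$, which exists by the preliminary facts. By Lemma~\ref{lem.Odense} one has $(t,m(t))\in\mathcal{O}$ for every $t\in(t_0,T)$, and by Lemma~\ref{lem.reguum}, $\mathbf{d}_2(m(t),m_0)\le C\,|t-t_0|^{1/2}\to0$ as $t\downarrow t_0$. Hence $(t,m(t))\to(t_0,m_0)$ in $[0,T)\times\Pw$, so $\mathcal{O}$ meets every neighborhood of $(t_0,m_0)$, which is density.

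For openness, assume towards a contradiction that $(t_0,m_0)\in\mathcal{O}$ while there are $(t_0^n,m_0^n)\to(t_0,m_0)$ with $(t_0^n,m_0^n)\notin\mathcal{O}$. Let $(m,\alpha)$ be the unique stable minimizer of $\mathcal{U}(t_0,m_0)$ and $u$ its multiplier; by Lemma~\ref{lem.StrongStab}, $(u,m)$ is strongly stable, so the homogeneous system \eqref{eq.LSH} with $V=H_p(\cdot,Du)$, $\Gamma=H_{pp}(\cdot,Du)$ and initial measure $m_0$ is strongly stable, and I fix $\mathcal{V},\eta,C$ as provided by Lemma~\ref{lem.estLS-Appen}. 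Given any minimizer $(m^n,\alpha^n)$ of $\mathcal{U}(t_0^n,m_0^n)$ with multiplier $u^n$, Lemma~\ref{lem.stabilo} (applicable since the limit minimizer is unique) gives $u^n,Du^n,D^2u^n\to u,Du,D^2u$ locally uniformly, hence $V^n:=H_p(\cdot,Du^n)\to V$, $\Gamma^n:=H_{pp}(\cdot,Du^n)\to\Gamma$ locally uniformly, with $\|V^n\|_{C^{1,3}}+\|\Gamma^n\|_\infty\le 2C_0$ by Lemma~\ref{lem.reguum} and $\mathbf{d}_2(m_0^n,m_0)\to0$; by the corollary to Lemma~\ref{lem.estLS-Appen}, for $n$ large the linearized system at $(u^n,m^n)$ is strongly stable, in particular stable. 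Since $\mathcal{U}(t_0^n,m_0^n)$ always has a minimizer and $(t_0^n,m_0^n)\notin\mathcal{O}$, for $n$ large it must have at least two distinct minimizers $(m^n_1,\alpha^n_1)\neq(m^n_2,\alpha^n_2)$, with multipliers $u^n_1,u^n_2$, and by the above both converge to $(u,m)$ in the indicated topologies.

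Now set $z^n=u^n_1-u^n_2$ and $\mu^n=m^n_1-m^n_2$. Subtracting the two copies of \eqref{MFG} and expanding to first order — writing $H(x,Du^n_1)-H(x,Du^n_2)=H_p(x,Du^n_1)\cdot Dz^n+Q^n$ with $Q^n$ quadratic in $Dz^n$, $\ H_p(x,Du^n_1)m^n_1-H_p(x,Du^n_2)m^n_2=H_p(x,Du^n_1)\mu^n+\Gamma^n_{av}\,Dz^n\,m^n_2$ with $\Gamma^n_{av}=\int_0^1 H_{pp}(x,(1-s)Du^n_2+sDu^n_1)\,ds$, and $F(x,m^n_1(t))-F(x,m^n_2(t))=\langle\tfrac{\delta F}{\delta m}(x,m^n_1(t)),\mu^n(t)\rangle+\int_0^1\langle\tfrac{\delta F}{\delta m}(x,(1-s)m^n_2(t)+sm^n_1(t))-\tfrac{\delta F}{\delta m}(x,m^n_1(t)),\mu^n(t)\rangle\,ds$, and similarly at $t=T$ with $\mathcal{G}$ — one checks that $(z^n,\mu^n)$ solves \eqref{eq.LS} with $V'=H_p(\cdot,Du^n_1)$, $\Gamma'=\Gamma^n_{av}$, $m'=m^n_1$, $\sigma'=1$, $\xi^n=0$ (the forward data cancel), and remainders $R^{1,n},R^{2,n},R^{3,n}$ each of which is a product of one of $\|z^n\|_{C^{(2+\delta)/2,2+\delta}}$, $\sup_t\|\mu^n(t)\|_{(C^{2+\delta})'}$ with a quantity $\varepsilon_n\to0$ (arising from $Q^n$, from $\sup_t\mathbf{d}_1(m^n_1(t),m^n_2(t))\to0$, and from the moduli of continuity of $\tfrac{\delta F}{\delta m},\tfrac{\delta G}{\delta m}$ at measures converging to $m(\cdot)$), the parabolic time-Hölder norms being controlled via the uniform $\tfrac12$-Hölder-in-$\mathbf{d}_2$ bound on the $m^n_i$ from Lemma~\ref{lem.reguum}. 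For $n$ large the data $(V',t_0^n,m_0^n,\Gamma',R^{1,n},R^{2,n},R^{3,n},0,1)$ satisfy \eqref{cond.data-Appen}, so Lemma~\ref{lem.estLS-Appen} yields
\[
\|z^n\|_{C^{(2+\delta)/2,2+\delta}}+\sup_{t}\|\mu^n(t)\|_{(C^{2+\delta})'}\ \le\ C\varepsilon_n\Bigl(\|z^n\|_{C^{(2+\delta)/2,2+\delta}}+\sup_{t}\|\mu^n(t)\|_{(C^{2+\delta})'}\Bigr),
\]
and since both norms are finite (Lemma~\ref{lem.reguum}) and $C\varepsilon_n<1$ eventually, $z^n\equiv0$ and $\mu^n\equiv0$. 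Thus $m^n_1=m^n_2$ and $\alpha^n_1=-H_p(\cdot,Du^n_1)=-H_p(\cdot,Du^n_2)=\alpha^n_2$, contradicting $(m^n_1,\alpha^n_1)\neq(m^n_2,\alpha^n_2)$. Hence $\mathcal{O}$ is open.

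The step I expect to be the main obstacle is the remainder estimate $M^n\le\varepsilon_n(\|z^n\|_{C^{(2+\delta)/2,2+\delta}}+\sup_t\|\mu^n(t)\|_{(C^{2+\delta})'})$ above: one must make the source terms produced by differencing the two nonlinear MFG systems small \emph{in exactly the parabolic Hölder norms that Lemma~\ref{lem.estLS-Appen} requires}, and for terms such as $\int_0^1\langle\tfrac{\delta F}{\delta m}(x,(1-s)m^n_2(t)+sm^n_1(t))-\tfrac{\delta F}{\delta m}(x,m^n_1(t)),\mu^n(t)\rangle\,ds$ this means controlling their $C^{\delta/2}$-in-time seminorm, which forces one to combine the measure-Lipschitz regularity of $F$ and $G$ coming from \eqref{F}--\eqref{G} with the uniform time-regularity of the optimal flows from Lemma~\ref{lem.reguum}. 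This bookkeeping is the technical heart of the openness argument, while the density part rests analogously on the Lions--Malgrange-type input inside Lemma~\ref{lem.Odense}.
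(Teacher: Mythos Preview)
Your argument is correct and follows essentially the same route as the paper: density from Lemma~\ref{lem.Odense}, openness by contradiction, ruling out the ``unique but unstable'' alternative via the corollary of Lemma~\ref{lem.estLS-Appen}, and excluding two distinct minimizers by feeding their difference into Lemma~\ref{lem.estLS-Appen}. The only cosmetic difference is that the paper normalizes by $\theta^n:=\|Du^{n,1}-Du^{n,2}\|_{C^{\delta/2,\delta}}+\sup_t\mathbf{d}_1(m^{n,1}(t),m^{n,2}(t))$ and shows the rescaled pair tends to $0$, whereas you keep the raw difference and close a self-absorbing inequality; these are equivalent.

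One point to make explicit when you write out the remainder bounds: the quadratic $F$- and $G$-terms are naturally controlled by $\bigl(\sup_t\mathbf{d}_1(m^n_1(t),m^n_2(t))\bigr)^2$, and $\sup_t\mathbf{d}_1(m^n_1,m^n_2)$ is \emph{not} dominated by $\sup_t\|\mu^n(t)\|_{(C^{2+\delta})'}$ (the inequality goes the other way). To feed these terms into your inequality you need the Gronwall step $\sup_t\mathbf{d}_1(m^n_1(t),m^n_2(t))\le C\|Du^n_1-Du^n_2\|_\infty\le C\|z^n\|_{C^{(2+\delta)/2,2+\delta}}$, which the paper states as \eqref{lizekjsrdfkekrf}; once you insert it, your $M^n\le\varepsilon_n(\|z^n\|+\sup_t\|\mu^n\|)$ bound and the contradiction go through exactly as you describe.
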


\begin{proof}
Lemma \ref{lem.Odense}   implies that the set  $\mathcal O$ is a nonempty, dense subset of  $[0,T)\times \Pw$. 
\vs

Next we show that  $\mathcal O$ is open arguing  by contradiction. For this, we fix $(t_0,m_0)\in \mathcal O$ and assume that there are initial positions $(t^n,m^n_0)\notin \mathcal O$ which converge to $(t_0, m_0)$  (in $\Pw$ for $(m^n_0)_{n\in \N}$).  
Let $(m,\alpha)$ be the unique and stable minimizer for $(t_0,m_0)$ and $u$ be the associated multiplier, that is,  $\alpha= -H_p(x,Du)$ and the pair $(u,m)$ solves \eqref{MFG}. 
\vs
Since  $(t^n,m^n_0)\notin \mathcal O$, there are two cases (up to subsequences): either, for all $n$,  there exist several minimizers for $\mathcal U(t^n, m^n_0)$ or, for all $n$, there exists a unique minimizer which  is not stable. This latter case is ruled out  by Lemma \ref{lem.estLS-Appen} and the strong stability of $(m,\alpha)$. 
\vs
It remains to consider the first case and we argue as follows. Let  $(m^{n,1},\alpha^{n,1})$ and $(m^{n,2},\alpha^{n,2})$ be two distinct minimizers starting from $(t^n, m^n_0)$ with associated multipliers  $u^{n,1}$ and $u^{n,2}$ respectively. 
\vs
Since the problem with initial condition $(t_0,m_0)$ has a unique minimizer, it follows from Lemma \ref{lem.stabilo} that, for $i=1,2$,     the $(m^{n,i},\alpha^{n,i})$'s  converge to $(m,\alpha)$ in $C^0([0,T]; \Pk)\times C^{\delta/2\delta}$ while the $u^{n,i}$'s, $Du^{n,i}$'s and $D^2u^{n,i}$'s converge to $u$, $Du$ and $D^2u$ respectively in $C^{\delta/2,\delta}$. 
\vs
Set 
$$
\theta^n = \|Du^{n,1}-Du^{n,2}\|_{C^{\delta/2,\delta}}+\sup_{t\in [t_0,T]} \dk(m^{n,1}(t),m^{n,2}(t))
$$ and note that, since  $(m^{n,1},\alpha^{n,1})$ and $(m^{n,2},\alpha^{n,2})$ are distinct, $\theta^n>0$ and, in view of the previous discussion, $\theta_n\to 0$, and,
finally, 
\be\label{lizekjsrdfkekrf}
\theta^n \leq C \|Du^{n,1}-Du^{n,2}\|_{C^{\delta/2,\delta}}. 
\ee
This  last estimate follows from the fact that again the uniform parabolicity implies that the  $D^2u^{n,i}$'s are uniformly bounded. Applying Gronwall's inequality to the stochastic differential equations associated with the Kolmogorov equations satisfied by $m^{n,1}$ and $m^{n,2}$, we find the distance \\
$\sup_{[t_n, T]}  \dk(m^{n,1}(t),m^{n,2}(t))$ is controlled by $ C \|Du^{n,1}-Du^{n,2}\|_\infty$ and thus by $C \|Du^{n,1}-Du^{n,2}\|_{C^{\delta/2,\delta}}$. 
\vs
Next we introduce the differences   $z^n= (u^{n,1}-u^{n,2})/\theta^n$, $\mu^n= (m^{n,1}-m^{n,2})/\theta^n$ and observe that 
$$
\left\{\begin{array}{l}
\ds -\partial_t z^n -\Delta z^n +H_p(x,Du^{n,1})\cdot Dz^n= \frac{\delta F}{\delta m}(x,m^{n,1}(t))(\mu^n(t))+R^{n,1},\\[1.5mm]
\ds \partial_t \mu^n -\Delta \mu^n -{\rm div}(H_p(x,Du^{n,1})\mu^n)-{\rm div}(H_{pp}(x,Du^{n,1})Dz^n m^{n,1})={\rm div}(R^{2,n}),\\[1.3mm]
\ds \mu^n(t_0)=0, \; z^n(T, x)=\frac{\delta G}{\delta m}(x,m^{n,1}(T))(\mu^n(T))+R^{3,n}
\end{array}\right.
$$
with 
\begin{align*}
R^{n,1} = & (\theta^n)^{-1} \Bigl[H(x,Du^{n,2})-H(x,Du^{n,1})-H_p(x,Du^{n,1})\cdot (Du^{n,2}-Du^{n,1})\\
 & \qquad\qquad -\Bigl( F(x,m^{n,2}(t))-F(x,m^{n,1}(t))- \frac{\delta F}{\delta m}(x,m^{n,1}(t))(m^{n,2}(t)-m^{n,1}(t)) \Bigr)\bigr],
\end{align*}
\begin{align*}
R^{n,2}= &  -(\theta^n)^{-1} \Bigl[ H_p(x,Du^{n,2})m^{n,2}- H_p(x,Du^{n,1})m^{n,1}-H_p(x,Du^{n,1})(m^{n,2}-m^{n,1})\\
& \qquad \qquad -H_{pp}(x,Du^{n,1})(Du^{n,2}-Du^{n,1})m^{n,1})\Bigr],
\end{align*}
and
$$
R^{n,3}=-(\theta^n)^{-1} \Bigl[ G(x,m^{n,2}(T))-G(x,m^{n,1}(T))- \frac{\delta G}{\delta m}(x,m^{n,1}(T))(m^{n,2}(T)-m^{n,1}(T)) \bigr].
$$
\vs
It follows from the regularity of $F$, $G$ and $H$ and the definition of $\theta^n$ that 
\be\label{estiR1R2}
\|R^{n,1}\|_{C^{\delta/2,\delta}}+\|R^{n,3}\|_{C^{2+\delta}} + \sup_{t\in [t_0,T]} \|R^{n,2}(t)\|_{(W^{1,\infty})'} \leq C\theta^n.  
\ee
However, Lemma \ref{lem.estLS-Appen} yields that  the sequence $(z^n)_{n\in \N}$  tends to $0$ in $C^{(1+\delta)/2,1+ \delta}$, which contradicts \eqref{lizekjsrdfkekrf}.
\end{proof}

\subsection*{The smoothness of $\mathcal U$ in $\mathcal O$} We prove here Theorem~\ref{thm.mainreguU0}. 
\vs
Before we present the arguments, we state below as lemma a preliminary fact that is needed to establish the regularity of $\ms U$. It is about   a stability property in the appropriate norms for the multipliers associated with minimizers starting in $\ms O$. In turn, this will allow us to compute the derivative of $\ms U$ with respect to $m$. Its proof is presented at the end of this subsection.
\vs

\begin{lem}\label{lem.Lip} Assume \eqref{ass.main} and fix $(t_0,m_0)\in \mathcal O$. There exists $\delta, C>0$ such that, for any $t_0'$, $m_0^1,m_0^2$ satisfying  $(t_0',m_0^i)\in \mathcal O$, $|t_0'-t_0|<\delta$, ${\bf d}_2(m_0,m_0^i)<\delta$, if $(m^i,\alpha^i)$ is the unique minimizer starting from $(t_0', m_0^i)$ with associated multiplier $u^i$ for $i=1$ and $i=2$,  then 
$$
\|u^1-u^2\|_{C^{(2+\delta)/2,2+\delta}}+ \sup_{t\in [t_0',T]} {\bf d}_2(m^1(t),m^2(t)) \leq C {\bf d}_2(m_0^1,m^2_0).
$$
\end{lem}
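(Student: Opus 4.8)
The plan is to establish the Lipschitz bound by a linearization argument, using the difference $(z, \mu) = (u^1 - u^2, m^1 - m^2)$ together with the a priori estimate of Lemma~\ref{lem.estLS-Appen}. First I would note that since $(t_0, m_0) \in \mathcal O$, the associated minimizer $(m, \alpha)$ with multiplier $u$ is \emph{strongly} stable (Lemma~\ref{lem.StrongStab}), so the homogeneous linearized system \eqref{eq.LSH} with $V(t,x) = H_p(x,Du)$ and $\Gamma(t,x) = H_{pp}(x,Du)$ satisfies the strong stability condition \eqref{takis31}. By Lemma~\ref{lem.stabilo}, shrinking $\delta$ if necessary, every minimizer $(m^i, \alpha^i)$ starting from a point $(t_0', m_0^i)$ with $|t_0' - t_0| < \delta$ and ${\bf d}_2(m_0, m_0^i) < \delta$ has a multiplier $u^i$ with $Du^i$ and $D^2 u^i$ uniformly close to $Du$, $D^2 u$; in particular $(V^i, \Gamma^i) := (H_p(\cdot, Du^i), H_{pp}(\cdot, Du^i))$ lies in the neighborhood $\mathcal V$ of $(V, \Gamma)$ furnished by Lemma~\ref{lem.estLS-Appen} and satisfies the uniform bounds in \eqref{cond.data-Appen}.

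Next I would write down the system satisfied by $(z, \mu)$. Since $(u^i, m^i)$ both solve \eqref{MFG} (with the same $t_0'$ but different initial data $m_0^i$), subtracting and Taylor-expanding $H$, $H_p$, $F$, $G$ around $(Du^1, m^1)$ shows that $(z, \mu)$ solves a system of the form \eqref{eq.LS} with drift $V^1 = H_p(\cdot, Du^1)$, matrix $\Gamma^1 = H_{pp}(\cdot, Du^1)$, $\sigma = 1$, right-hand sides $R^1, R^2, R^3$ that are quadratic remainders in $Du^1 - Du^2$ and $m^1 - m^2$, and nonzero initial datum $\xi = m_0^1 - m_0^2$ for $\mu$. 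The key structural point is that the remainder terms are controlled by the \emph{product} of the data discrepancy and the solution norms: using the $C^4_{loc}$-regularity of $H$, the $C^2$/$C^4$-regularity of $\mathcal F$, $\mathcal G$, and the uniform $C^{2+\delta}$ bounds on the $u^i$ from Lemma~\ref{lem.reguum}, one gets
$$
\|R^1\|_{C^{\delta/2,\delta}} + \|R^3\|_{C^{2+\delta}} + \sup_t \|R^2(t)\|_{(W^{1,\infty})'} \leq C\,\theta\,\big(\|z\|_{C^{(1+\delta)/2,1+\delta}} + \sup_t \|\mu(t)\|_{(C^{2+\delta})'}\big),
$$
where $\theta := \|Du^1 - Du^2\|_{C^{\delta/2,\delta}} + \sup_t {\bf d}_1(m^1(t), m^2(t))$ is small (by Lemma~\ref{lem.stabilo}). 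Also $\|\xi\|_{(W^{1,\infty})'} \leq {\bf d}_1(m_0^1, m_0^2) \leq {\bf d}_2(m_0^1, m_0^2)$.

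Then I would apply Lemma~\ref{lem.estLS-Appen} to this system: since all the data satisfy \eqref{cond.data-Appen}, we obtain
$$
\|z\|_{C^{(2+\delta)/2,2+\delta}} + \sup_{t} \|\mu(t)\|_{(C^{2+\delta})'} \leq C M' \leq C\,{\bf d}_2(m_0^1,m_0^2) + C\theta\big(\|z\|_{C^{(2+\delta)/2,2+\delta}} + \sup_t\|\mu(t)\|_{(C^{2+\delta})'}\big).
$$
Because $\theta \to 0$ as $\delta \to 0$ uniformly over the admissible initial data (this is exactly the content of Lemma~\ref{lem.stabilo}), after further shrinking $\delta$ so that $C\theta \leq 1/2$ we can absorb the last term into the left-hand side and conclude $\|u^1 - u^2\|_{C^{(2+\delta)/2,2+\delta}} + \sup_t \|\mu(t)\|_{(C^{2+\delta})'} \leq C\,{\bf d}_2(m_0^1, m_0^2)$. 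Finally, to pass from the $(C^{2+\delta})'$ bound on $\mu = m^1 - m^2$ to the stated ${\bf d}_2$ bound, I would use a Gr\"onwall argument on the stochastic differential equations associated to the Kolmogorov equations for $m^1$ and $m^2$: the drifts $-H_p(\cdot, Du^i)$ differ by $C\|Du^1 - Du^2\|_\infty \leq C\,{\bf d}_2(m_0^1,m_0^2)$ and have uniformly bounded spatial derivatives (uniform parabolicity plus Lemma~\ref{lem.reguum}), so a synchronous coupling of the two SDEs started from an optimal ${\bf d}_2$-coupling of $m_0^1, m_0^2$ gives $\sup_t {\bf d}_2(m^1(t), m^2(t)) \leq C\,{\bf d}_2(m_0^1, m_0^2)$.

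The main obstacle I anticipate is the bootstrap/absorption step: one must be careful that the constant $C$ in Lemma~\ref{lem.estLS-Appen} and the smallness of $\theta$ are both uniform over the whole neighborhood of $(t_0, m_0)$ (not depending on the particular pair $(m_0^1, m_0^2)$), which is why the neighborhood $\mathcal V$ and the radius $\eta$ in Lemma~\ref{lem.estLS-Appen} — together with the uniform-in-initial-data convergence in Lemma~\ref{lem.stabilo} — are essential, and the final $\delta$ must be chosen smaller than both. A secondary technical point is verifying that the quadratic remainders $R^1, R^2, R^3$ genuinely have the claimed regularity (e.g. that $R^2$, which involves a difference of products $H_p(\cdot,Du^i)m^i$, lies in $L^\infty([t_0',T],(W^{1,\infty})')$ with the quadratic bound); this is routine given the uniform $C^{2+\delta}$ control on the $u^i$ and the $L^2$-in-space bounds on the $m^i$, but needs the regularity of $\mathcal F$, $\mathcal G$ in an essential way, exactly as in the remainder estimates \eqref{estiR1R2} in the proof of Theorem~\ref{thm.Oopendense}.
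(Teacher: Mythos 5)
Your overall strategy is the paper's: write $(z,\mu)=(u^1-u^2,m^1-m^2)$ as a solution of the inhomogeneous linearized system \eqref{eq.LS} with quadratic remainders and initial datum $\xi=m_0^1-m_0^2$, verify that the data fall in the regime of Lemma~\ref{lem.estLS-Appen} (using Lemmas~\ref{lem.StrongStab} and~\ref{lem.stabilo}), and then close by absorption. However, the absorption step as you wrote it has a gap. Your claimed remainder bound
\[
\|R^1\|_{C^{\delta/2,\delta}}+\|R^3\|_{C^{2+\delta}}+\sup_t\|R^2(t)\|_{(W^{1,\infty})'}\ \leq\ C\,\theta\bigl(\|z\|_{C^{(1+\delta)/2,1+\delta}}+\sup_t\|\mu(t)\|_{(C^{2+\delta})'}\bigr)
\]
is not correct for the $F$- and $G$-remainders. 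Those are second-order Taylor remainders in the measure argument and are controlled by $\sup_t{\bf d}_1^2(m^1(t),m^2(t))$ or $\sup_t{\bf d}_2^2(m^1(t),m^2(t))$; and ${\bf d}_1(m^1(t),m^2(t))$ is \emph{not} dominated by $\|\mu(t)\|_{(C^{2+\delta})'}$ (Lipschitz test functions are unbounded, so the Kantorovich dual norm is strictly stronger than the $(C^{2+\delta})'$ norm without moment truncations). Consequently, the right-hand side you want to absorb is not a multiple of the left-hand side of Lemma~\ref{lem.estLS-Appen}, and the one-shot absorption does not close.

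What the paper does instead is to keep two coupled quantities, $A:=\|z\|_{C^{(2+\delta)/2,2+\delta}}$ and $B:=\sup_t{\bf d}_2(m^1(t),m^2(t))$, and run a genuine fixed-point between them. First one proves the Kolmogorov-type estimate \eqref{zuesdzmesdf}, namely $B\leq C({\bf d}_2(m_0^1,m_0^2)+\|Du^1-Du^2\|_\infty)\leq C({\bf d}_2(m_0^1,m_0^2)+A)$, by comparing the flows of the two SDEs started from an optimal ${\bf d}_2$-coupling. Then the remainder estimate gives $M\leq {\bf d}_1(m_0^1,m_0^2)+C(A^2+B^2)$, hence by Lemma~\ref{lem.estLS-Appen} $A\leq C({\bf d}_1(m_0^1,m_0^2)+A^2+B^2)$. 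Using $A,B\leq\eta$ from Lemma~\ref{lem.stabilo} you absorb $A^2\leq\eta A$ to get $A\leq C({\bf d}_1(m_0^1,m_0^2)+B^2)$, insert this into the Kolmogorov bound to get $B\leq C({\bf d}_2(m_0^1,m_0^2)+B^2)$, absorb $B^2\leq\eta B$, and conclude $B\leq C{\bf d}_2(m_0^1,m_0^2)$ and then $A\leq C{\bf d}_2(m_0^1,m_0^2)$. In short, your Gr\"onwall/SDE argument is exactly the missing ingredient, but it has to be used \emph{inside} the bootstrap (to control $B$ in terms of $A$) rather than only as a post-processing step once the $\|\mu\|_{(C^{2+\delta})'}$ bound has supposedly been obtained. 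With that reorganization your proof becomes the paper's proof.
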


We continue with the proof of Theorem~\ref{thm.mainreguU0} which consists of three parts. In the first, we establish the regularity of $\ms U$ in $\ms O$. In the second, we show that the infinite dimensional Hamilton-Jacobi equation \eqref{HJdsO0} is satisfied in $\ms O$. Finally, the third part is about \eqref{takis50}. 

\begin{proof}[Proof of Theorem \ref{thm.mainreguU0}] {\it Part 1: The regularity of $\ms U$.}
Lemma \ref{lem.estiVN} yields the Lipschitz continuity of $\mathcal U$. 
\vs

We establish  that $\mathcal U$ is differentiable at any $(t_0,m_0)\in \mathcal O$. 
We fix such a $(t_0,m_0)$. Let  $(m,\alpha)$ be the unique stable minimizer  for $\mathcal U(t_0,m_0)$ and $u$ its  associated multiplier. We  check that $D_m\mathcal U(t_0,m_0,\cdot)$ exists and is given by $Du(t_0,\cdot)$. 
\vs

Let $\delta>0$ and $\delta'\in (0,\delta)$ be such that  the  $\delta-$neighborhood of the $[0,T]\times \Pw$-compact set $\{(t,m(t)): t\in [t_0,T]\}$  is contained in $\ms O$, and, 
for any $m^1_0\in B(m_0,\delta')$, $\sup_{t\in [t_0,T]} {\bf d_1}(m(t),m^1(t))<\delta$, where $(m^1,\alpha^1)$ is the minimizer for $\mathcal U(t_0, m_0^1)$.
\vs 

Fix $m^1_0\in B(m_0,\delta')$.  Let  $(m^1,\alpha^1)$ be the minimizer for  $\mathcal U(t_0, m_0^1)$, $u^1$ its associated multiplier, $(z,\mu)$  the solution of the linearized system \eqref{MFGL} with initial condition $\mu (0)= m^1_0-m_0$, set 
$$
(w,\rho)= (u^1-u-z, m^1-m-\mu)
$$
and note that $(w,\rho)$ satisfies the linearized system \eqref{eq.LS} with $\xi= 0$,
$$
R^1= -(H(x,Du^1)-H(x,Du)-H_p(x,Du)\cdot (Du^1-Du)) +F(x,m^1)-F(x, m) -\frac{\delta F}{\delta m}(x,m(t))(m^1(t)-m(t)), 
$$
\begin{align*}
R^2=& H_p(x,Du^1)m^1-H_p(x,Du)m -H_p(x,Du)(m^1-m)-H_{pp}(x,Du)\cdot (Du^1-Du)m,
\end{align*}
and 
$$
R^3=G(x,m^1)-G(x, m) -\frac{\delta G}{\delta m}(x,m(t))(m^1(t)-m(t)). 
$$
Then, using   Lemma~\ref{lem.Lip}, we get 
$$
\|R^1\|_{C^{\delta/2,\delta}}+\|R^{3}\|_{C^{2+\delta}}+ \sup_{t\in [t_0,T]}\|R^{2}(t)\|_{(W^{1,\infty})'} \leq C{\bf d}_1^2(m^1_0,m_0)
$$
and, in view of Lemma~\ref{lem.estLS-Appen},  
$$
 \|u^1-u-w\|_{C^{(2+\delta)/2,2+\delta}}+\sup_{t\in [t_0,T]} \|m^1(t)-m(t)-\mu(t)\|_{(C^{2+\delta})'} \leq C{\bf d}_1^2(m^1_0,m_0). 
$$
Recall that $\alpha^1= -H_p(x, Du^1)$. Thus 
$$
\alpha^1= \alpha - H_{pp}(x,Du). Dw+ o({\bf d}_1(m^1_0,m_0)).
$$
where $o(\cdot)$ is small in uniform norm. 
\vs
It follows that 
\begin{align*} 
 &\mathcal U(t_0,m^1_0) = \int_{t_0}^T \Bigl( \int_{\R^d} L(x,\alpha^1) m^1 +\mathcal F(m^1)\Bigr)dt + \mathcal G(m^1(T)) \\
&\qquad = \mathcal U(t_0,m_0) + \int_{t_0}^T  \int_{\R^d} \Bigl(D_\alpha L(x,\alpha)\cdot(-H_{pp}(x,Du)Dw m +L(x,\alpha)\mu(t,x)  +F(x,m(t))\mu(t,x)\Bigr)dxdt \\
& \qquad \qquad + \int_{\R^d} G(x,m(T))\mu(T,x)dx +  o({\bf d}_1(m^1_0,m_0)).
\end{align*}
On the other hand, recalling the equations satisfied by $u$ and $\mu$ and using duality we find 
\begin{align*}
& \int_{\R^d}G(x,m(T))\mu(T,x)dx - \int_{\R^d} u(t_0,x)(m^1_0-m_0)(dx)\\
& \qquad = \int_{t_0}^T \int_{\R^d} \Bigl( (H(x,Du) -F(x,m(t))-H_p(x,Du)\cdot Du)\mu -H_{pp}(x,Du)Du\cdot Dw m \Bigr). 
\end{align*}
Thus,
\begin{align*} 
 &\mathcal U(t_0,m^1_0) =  \mathcal U(t_0,m_0) +  \int_{\R^d} u(t_0,x)(m^1_0-m_0)(dx) \\
 & \qquad + \int_{t_0}^T  \int_{\R^d} \Bigl((H(x,Du)-H_p(x,Du)\cdot Du +L(x,\alpha))\mu \\
& \qquad -(H_{pp}(x,Du)Du\cdot Dw  +  D_\alpha L(x,\alpha)\cdot(H_{pp}(x,Du)Dw)) m    \Bigr)dxdt+  o({\bf d}_1(m^1_0,m_0))
\end{align*}
In view of the relationship (convex duality)  between $H$ and $L$ and the fact that $\alpha=-H_p(x,Du)$,  we have $D_\alpha L(x,\alpha)=-Du$ and, therefore,
$$
H(x,Du)-H_p(x,Du)\cdot Du +L(x,\alpha)=0
$$
and 
$$
H_{pp}(x,Du)Du\cdot Dw  +  D_\alpha L(x,\alpha)\cdot(H_{pp}(x,Du)Dw)=0.
$$
Thus, 
\begin{align*} 
 &\mathcal U(t_0,m^1_0) =  \mathcal U(t_0,m_0) +  \int_{\R^d} u(t_0,x)(m^1_0-m_0)(dx) +  o({\bf d}_1(m^1_0,m_0)).
\end{align*}
It follows that $\mathcal U(t_0,\cdot)$ has a linear derivative at $m_0$ given by $u(t_0,\cdot)$ and, hence, $$D_mU(t_0,m_0,x)= Du(t_0,x).$$
\vs

Recalling the stability of the map $m_0\to (u,m)$ proved in Lemma \ref{lem.stabilo}, we actually have that $(t_0,m_0)\to D_mU(t_0,m_0, \cdot)$ is continuous in $\mathcal O$ with the respect to  the ${\bf d}_2-$distance for the \\ measure variable into $C^2$. 

\vs

{\it Part 2: The Hamilton-Jacobi equation.}
Next we show that $\ms U$ is a classical solution to  \eqref{HJdsO0}. 
\vs
Using the notation of Part 1 and  the dynamic programming principle with $h>0$ small, we find 
$$
\mathcal U(t_0,m_0)= \int_{t_0}^{t_0+h}\Bigl( \int_{\R^d} L(x,\alpha(t,x))m(t,x)dx+ \mathcal F(m(t))\Bigr)dt + \mathcal U(t_0+h, m(t_0+h)),
$$
and, in view of the $C^1-$regularity of $\mathcal U$, 
\begin{align*}
\mathcal U(t_0+h, m(t_0+h))- \mathcal U(t_0+h, m_0) = &\int_{t_0}^{t_0+h} \int_{\R^d} \Bigl( {\rm Tr}D^2_{ym} \mathcal U(t_0+h, m(t),y)\\
&\;  
 + 
D_m\mathcal U(t_0+h, m(t),y)\cdot H_p(t,Du(t,y))\Bigr) m(t,dy)dy dt.
\end{align*}
It follows that  $\partial_t \mathcal U(t_0,m_0)$ exists and is given by 
\begin{align*}
\partial_t \mathcal U(t_0,m_0) 
=& -  \int_{\R^d} L(x,\alpha(t_0,x))m_0(dx)- \mathcal F(m_0) \\
& -
\int_{\R^d} \Bigl( {\rm Tr}D^2_{ym} \mathcal U(t_0, m_0,y)
 + 
D_m\mathcal U(t_0, m_0,y)\cdot H_p(t_0,Du(t_0,y))\Bigr) m_0(dy). 
\end{align*}
Since  $Du(t_0,x)= D_m\mathcal U(t_0,m_0,x)$ and $\alpha(t_0,x)=-H_p(x,Du(t_0,x))$, \eqref{HJdsO0} is then satisfied. 
\vs

{\it Part 3: The regularity of $D_m\ms U$.} We prove that \eqref{takis50} holds. 
\vs

Let $\delta,C>0$ be given by  Lemma \ref{lem.Lip}. For any $t\in (t_0-\delta, t_0+\delta)$ and $m^2_0,m^2_0\in \Pw$ with ${\bf d}_2(m_0,m^1_0)<\delta$, ${\bf d}_2(m_0,m^2_0)<\delta$, $(t,m^1_0)\in \mathcal O$ and $(t,m^2_0)\in \mathcal O$, and for any $x^1,x^2\in \R^d$, let  $u^1$ (respectively $u^2$) be the multiplier associated with the unique minimizer $(m^1,\alpha^1)$ for $\mathcal U(t,m^1_0)$ (respectively with the unique minimizer  $(m^2,\alpha^2)$ for $\mathcal U(t, m^2_0)$).
\vs
Since, as already established,   $D_m\mathcal U(t,m^1_0, x^1)=Du^1(t,x^1)$ and $D_m\mathcal U(t,m^2_0, x^2)=Du^2(t,x^2)$, we find, using  Lemma~\ref{lem.reguum} and Lemma~\ref{lem.Lip}, that 
\begin{align*}
\left| D_m\mathcal U(t,m^1_0, x^1)- D_m\mathcal U(t,m^2_0, x^2)\right| & \leq \left| Du^1(t, x^2)- Du^2(t,x^2)\right| +\|D^2u^1\|_\infty|x^1-x^2| \\ 
& \leq C ({\bf d}_2(m^1_0,m^2_0)+|x^1-x^2|).
\end{align*}
\end{proof}

We conclude with the remaining proof.

\begin{proof}[Proof of Lemma~\ref{lem.Lip}] Let $(m,\alpha)$ be the unique stable minimizer  starting from $(t_0,m_0)$ with multiplier $u$. 
It follows  from Lemma \ref{lem.StrongStab} that the associated linear system \eqref{MFGL} is strongly stable. 
\vs
We set $V= -H_p(x,Du)$ and $\Gamma= -H_{pp}(x,Du)$, consider  the neighborhood $\mathcal V$  (in the local uniform convergence) of $(V,\Gamma)$ given in Lemma \ref{lem.estLS-Appen}, and choose $\delta>0$ so that, for any $m_0^1$ such that  ${\bf d}_2(m_0,m^1_0)<\delta$, we have that $$(V^1, \Gamma^1)\in \mathcal V,$$ with $V^1=-H_p(x,Du^1)$ and $\Gamma_1= H_{pp}(x,Du^1)$, $u^1$ being the multiplier associated with the optimal solution $(m^1,\alpha^1)$ for $\mathcal U(t_0,m_0^1)$. The above is possible since, if $\delta$ is small, then, in view of  Lemma \ref{lem.stabilo}, $u^1$ is close to $u$ in $C^{\delta/2,\delta}$. 
\vs
Furthermore, choosing, if necessary,  $\delta$ even smaller, we have that,  for some $\eta>0$ to be chosen below, and,  for any  $t_0'$, $m_0^i$, $(m^i, \alpha^i)$ and $u^i$ as above such that $|t_0'-t_0|<\delta$ and ${\bf d}_2(m_0,m_0^i)<\delta$ for $i=1$ and $i=2$, 
\be\label{lzkej:snrdf,gc}
\|u^1-u^2\|_{C^{(2+\delta)/2,2+\delta}} + \sup_{t\in [t_0',T]} {\bf d}_2(m^1(t),m^2(t)) \leq \eta.
\ee

Classical estimates on the Kolmogorov equation (see, for instance, \cite{CDLL}) yield that,  for $t_0'$, $m_0^i$, $(m^i, \alpha^i)$ and $u^i$ as above, 
we have 
\be\label{zuesdzmesdf}
\sup_{t\in [t_0',T]} {\bf d}_2(m^1(t),m^2(t))\leq C( {\bf d}_2(m^1_0,m^2_0)+ \|Du^1-Du^2\|_\infty),
\ee
for a  constant $C$ depending on $T$, $H$ and  $\|D^2u^i\|_\infty$, which is uniformly bounded by Lemma \ref{lem.apriori-Appen}. 
\vs
Then the pair 
$$
(z,\mu)= (u^1-u^2,m^1-m^2)
$$
satisfies the linearized system \eqref{eq.LS} with 
\begin{align*}
V(t,x)= -H_p(x,Du^2), \ \ \Gamma= -H_{pp}(x,Du^2), \ \ \xi= m^1_0-m_0^2, 
\end{align*}
\begin{align*}
R^1&= -(H(x,Du^1)-H(x,Du^2)-H_p(x,Du^2)\cdot (Du^1-Du^2)) +F(x,m^1(t))-F(x, m^2(t)) \\
& \qquad \qquad -\frac{\delta F}{\delta m}(x,m^2(t))(m^1(t)-m^2(t)), 
\end{align*}
\begin{align*}
R^2&= H_p(x,Du^1)m^1-H_p(x,Du^2)m^2 -H_p(x,Du^2)(m^1-m^2)-H_{pp}(x,Du^2)\cdot (Du^1-Du^2)m^2\\
&=(H_p(x,Du^1)-H_p(x,Du^2))(m^1-m^2)\\
&\qquad +(H_p(x,Du^1)-H_p(x,Du^2)-H_{pp}(x,Du^2)\cdot (Du^1-Du^2))m^2,
\end{align*}
and 
$$
R^3=G(x,m^1)-G(x, m^2) -\frac{\delta G}{\delta m}(x,m^2(t))(m^1(t)-m^2(t)). 
$$
Note that 
\begin{align*}
R^2=& (H_p(x,Du^1)-H_p(x,Du^2))(m^1-m^2)\\
& \qquad +(H_p(x,Du^1)-H_p(x,Du^2)-H_{pp}(x,Du^2)\cdot (Du^1-Du^2))m^2.
\end{align*}
Thus
\begin{align*}
M& =\|\xi\|_{(W^{1,\infty})'}+\|R^1\|_{C^{\delta/2,\delta}}+\|R^{3}\|_{C^{2+\delta}}+ \sup_{t\in [t_0',T]}\|R^{2}(t)\|_{(W^{1,\infty})'}\\
& \leq {\bf d}_1(m^1_0,m^2_0)+
C\{\|Du^1-Du^2\|_{C^{\delta/2,\delta}}^2+ \sup_t{\bf d}^2_{2}(m^1(t),m^2(t))\}.
\end{align*}
It follows from  Lemma \ref{lem.estLS-Appen} that 
$$
\|u^1-u^2\|_{C^{(2+\delta)/2, 2+\delta}}\leq C\{ {\bf d}_1(m^1_0,m^2_0)+ \|Du^1-Du^2\|_{C^{\delta/2,\delta}}^2+ \sup_t{\bf d}^2_{2}(m^1(t),m^2(t))\}.
$$
Hence, choosing $\eta>0$ small enough in \eqref{lzkej:snrdf,gc}, we find 
$$
\|u^1-u^2\|_{C^{(2+\delta)/2, 2+\delta}}\leq C\{ {\bf d}_1(m^1_0,m^2_0)+  \sup_t{\bf d}^2_{2}(m^1(t),m^2(t))\},
$$
and inserting the last  inequality in  \eqref{zuesdzmesdf} we obtain
$$
\sup_t{\bf d}_{2}(m^1(t),m^2(t)) \leq C\{ {\bf d}_2(m^1_0,m^2_0)+  \sup_t{\bf d}^2_{2}(m^1(t),m^2(t))\}, 
$$
which yields, for $\eta>0$ small enough, 
$$
\sup_t{\bf d}_{2}(m^1(t),m^2(t)) \leq C {\bf d}_2(m^1_0,m^2_0). 
$$
Going back to the previous inequality on $\|u^2-u^2\|_{C^{(2+\delta)/2, 2+\delta}}$ completes the proof. 
\end{proof}

\section{The propagation of chaos} \label{sec.proofThm2}

We present the proof  of Theorem \ref{thm.main20}, which consists of several steps  each of which is stated below as separate lemma. 
\vs
In preparation, we fix $(t_0,m_0)\in \mathcal O$ with $M_{d+5}(m_0)<+\infty$  and let $(m,\alpha)$ be the unique minimizer for $\mathcal U(t_0,m_0)$.  
\vs

It follows from Theorem~\ref{thm.mainreguU0} and the compactness of the curve $\{(t,m(t)) :  t\in [t_0,T]\}$ that there exists 
$\delta, C>0$ such that, for any $t_1\in [t_0,T]$,  $t\in (t_1-\delta, t_1+\delta)$ and $m^1_0,m^2_0\in \Pw$ with ${\bf d}_2(m(t_1),m^1_0)<\delta$, ${\bf d}_2(m(t_1),m^2_0)<\delta$, $(t_1,m^1_0)\in \mathcal O$ and $(t_1,m^2_0)\in \mathcal O$, and $x^1,x^2\in \R^d$, 
\be\label{DmULipsch}
|D_m\mathcal U(t_1,m^1_0,x^1)-D_m\mathcal U(t_1,m^2_0,x^2)|\leq C (|x^1-x^2|+ {\bf d}_2(m^1_0,m^2_0)).
 \ee
 For $\sigma\in (0,\delta)$,  set 
$$
 V_\sigma = \{(t,m')\in [t_0, T]\times \Pw :  {\bf d}_2(m',m(t))<\sigma\}
$$
and 
$$
V^N_\sigma= \{(t, {\bf x})\in [0,T]\times (\R^d)^N :  (t,m^N_{{\bf x}})\in  V_\sigma\}.
$$

We consider the solution $({\bf X}^N_t)_{t\in [t_0,T]}=(X^{N,1}_t, \ldots,X^{N,N}_t )_{t\in [t_0,T]}$ to 
\be\label{def.XNi}
dX^{N,j}_t= Z^j-\int_{t_0}^t H_p(X^{N,j}_s, D_m\mathcal U(s,m^N_{{\bf X^{N}_s}},X^{N,j}_s))ds +\sqrt{2}(B^j_s-B^j_{t_0}),
\ee
on the time interval $[t_0, \tau^N]$, where the stopping time $\tau^N$ is defined  by
$$
\tau^N=\inf\left\{t\in [t_0,T], \; (t,{\bf X}^N_t) \notin V^N_{\delta/2} \right\} \ \ \text{or} \ \ \text{$\tau^N=T$, if there is no such a $t$.}
$$
Note that, in view of \eqref{DmULipsch}, ${\bf X}^N$ is uniquely defined.  
\vs
Set 
\be\label{def.tildetau}
\tilde \tau^N=\inf\{ t\in [t_0,\tau^N], \; (t,{\bf Y}^N_t)\notin V^N_{\delta}\} \ \ \text{or} \ \ \text{$\tilde \tau^N=\tau^N$ is there is no such a $t$.}
\ee

\begin{lem} \label{lem.aqemzjlrsdk} Assume \eqref{ass.main}. There is a constant $C>0$ depending on $(t_0,m_0)$ such that  
\be\label{aqemzjlrsdk}
\E\left[\sup_{t\in [t_0,\tau^N]} {\bf d}_1(m^N_{{\bf X}^N_t}, m(t)) \right]\leq  C N^{-1/(d+8)}
\ee
and
$$
\P\left[\tau^N<T\right] \leq C N^{-1/(d+8)}.
$$
\end{lem}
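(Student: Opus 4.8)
The plan is to compare the particle system \eqref{def.XNi}, stopped at $\tau^N$, with an i.i.d.\ system driven by the ``true'' mean field flow, and then to feed a Gr\"onwall estimate with a uniform-in-time quantitative law of large numbers for empirical measures. Let $u$ be the multiplier associated with the unique minimizer $(m,\alpha)$ for $\mathcal U(t_0,m_0)$ and set $b(t,x)=-H_p(x,Du(t,x))$, which, by the identification $D_m\mathcal U(t,m(t),\cdot)=Du(t,\cdot)$ established in the proof of Theorem~\ref{thm.mainreguU0}, equals $-H_p(x,D_m\mathcal U(t,m(t),x))$. By Lemma~\ref{lem.reguum}, $b$ is bounded and globally Lipschitz in $x$, uniformly in $t$, so the decoupled equations
\[
d\bar X^{j}_t=b(t,\bar X^{j}_t)\,dt+\sqrt2\,dB^{j}_t,\qquad \bar X^{j}_{t_0}=Z^{j},
\]
are strongly well posed and the $\bar X^{j}$ are independent. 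Since the Kolmogorov equation for this SDE is precisely the second equation of \eqref{MFG}, which has a unique solution in the relevant class of measure curves, $\mathcal L(\bar X^{j}_t)=m(t)$ for every $t$; moreover, a moment estimate on this SDE (as in the argument of Lemma~\ref{lem.reguum}) propagates the hypothesis $M_{d+5}(m_0)<\infty$ into $\sup_{t\in[t_0,T]}M_{d+5}(m(t))\le C$.

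First I would carry out a pathwise Gr\"onwall argument on $[t_0,\tau^N]$. Subtracting the equation for $\bar X^{j}$ from \eqref{def.XNi} the Brownian terms cancel; using that $D_m\mathcal U$ is bounded (so all arguments of $H_p$ stay in a fixed ball, on which $H_p$ is Lipschitz), the Lipschitz estimate \eqref{DmULipsch} (applicable for $s\le\tau^N$, since then $(s,m^N_{{\bf X}^N_s})\in V^N_{\delta/2}$ and $(s,m(s))$ both lie in $\mathcal O$ by the choice of $\delta$), and the bound ${\bf d}_2(m^N_{{\bf X}^N_s},m(s))\le\big(\tfrac1N\sum_{j}|X^{N,j}_s-\bar X^{j}_s|^2\big)^{1/2}+{\bf d}_2(m^N_{\bar{\bf X}_s},m(s))$, a Gr\"onwall argument for $\tfrac1N\sum_{j}\sup_{r\le s}|X^{N,j}_r-\bar X^{j}_r|^2$ yields
\[
\E\Big[\sup_{t\in[t_0,\tau^N]}{\bf d}_2\big(m^N_{{\bf X}^N_t},m(t)\big)\Big]\le C\,\E\Big[\sup_{t\in[t_0,T]}{\bf d}_2\big(m^N_{\bar{\bf X}_t},m(t)\big)\Big].
\]

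The main step is then to bound the right-hand side, i.e.\ to prove a uniform-in-time quantitative propagation of chaos for the i.i.d.\ system. Here I would discretise $[t_0,T]$ into $K$ subintervals and combine: (i) a Fournier--Guillin-type quantitative Wasserstein law of large numbers applied to the i.i.d.\ samples $\bar X^{1}_{t_k},\dots,\bar X^{N}_{t_k}$, using the uniform moment bound $\sup_{t}M_{d+5}(m(t))<\infty$, together with the accompanying concentration inequality and a union bound over the $K$ grid points; (ii) the $1/2$-H\"older continuity in time of $t\mapsto m(t)$ in ${\bf d}_2$ from Lemma~\ref{lem.reguum}; and (iii) the time-regularity of the paths $\bar X^{j}$ (bounded drift plus Brownian noise), again with a concentration bound, to control ${\bf d}_2(m^N_{\bar{\bf X}_t},m^N_{\bar{\bf X}_{t_k}})$ on each subinterval. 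Optimising over $K$, taken polynomial in $N$, balances the three contributions and gives $\E[\sup_t{\bf d}_2(m^N_{\bar{\bf X}_t},m(t))]\le CN^{-1/(d+8)}$, hence \eqref{aqemzjlrsdk} after recalling ${\bf d}_1\le{\bf d}_2$. I expect this uniform-in-time empirical-measure estimate to be the main obstacle: it is where the hypothesis $M_{d+5}(m_0)<\infty$ enters (through the propagated moment bound on $m(\cdot)$) and it is the source of the non-sharp exponent $1/(d+8)$.

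Finally, the bound on $\P[\tau^N<T]$ is immediate from the previous estimate: on the event $\{\tau^N<T\}$, continuity of $t\mapsto m^N_{{\bf X}^N_t}$ forces ${\bf d}_2(m^N_{{\bf X}^N_{\tau^N}},m(\tau^N))=\delta/2$, so $\sup_{t\in[t_0,\tau^N]}{\bf d}_2(m^N_{{\bf X}^N_t},m(t))\ge\delta/2$; Markov's inequality then gives $\P[\tau^N<T]\le(2/\delta)\,\E[\sup_{t\in[t_0,\tau^N]}{\bf d}_2(m^N_{{\bf X}^N_t},m(t))]\le CN^{-1/(d+8)}$.
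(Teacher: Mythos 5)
Your proposal is correct and follows essentially the same route as the paper: introduce the decoupled i.i.d.\ system driven by $b(t,x)=-H_p(x,D_m\mathcal U(t,m(t),x))$, run a pathwise Gr\"onwall estimate (valid on $[t_0,\tau^N]$ thanks to \eqref{DmULipsch} and the cancellation of the Brownian increments), and reduce to a uniform-in-time quantitative law of large numbers for the i.i.d.\ empirical measures, finishing with Markov's inequality for the probability bound. The only difference is that the paper does not reprove the uniform-in-time empirical-measure estimate by discretization plus Fournier--Guillin plus concentration as you sketch, but simply cites Theorem~3.1 of Horowitz and Karandikar \cite{HoKa} for $\E[\sup_t{\bf d}_2^2(m^N_{\tilde{\bf X}^N_t},m(t))]\le CN^{-2/(d+8)}$; your outline is a reasonable self-contained substitute for that citation and is where the moment hypothesis $M_{d+5}(m_0)<\infty$ and the exponent $1/(d+8)$ enter, exactly as in the cited result.
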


\begin{proof} The proof is standard and relies on propagation of chaos estimates; see, for instance, the proof of Theorem 5.6 in \cite{DeLaRa20}. 
\vs

Let $\tilde X^{N,i}$ be the i.i.d.  solutions to 
$$
d\tilde X^{N,i}_t= -H_p(\tilde X^{N,j}_t, D_m\mathcal U(t,m(t),\tilde X^{N,i}_t) )dt +\sqrt{2}dB^j_t \ \ \ \tilde X^{N,i}_{t_0}=Z^i,
$$
which exist on $[t_0,T]$, in view of the global Lipschitz property of  $D_m\mathcal U(t,m(t),\cdot)$.
\vs

Then  \eqref{aqemzjlrsdk} is an easy consequence of the inequality 
$$
\E\left[\sup_{t\in [0,T]} {\bf d}^2_2(m^N_{\tilde {\bf X}^N_t}, m(t))\right]\leq CN^{-2/(d+8)},
$$
which follows from  Theorem 3.1 in Horowitz and Karandikar  \cite{HoKa}, for some $C$ depending only on $(t_0,m_0)$ through the regularity of $D_m\mathcal U$ in $V_\delta$.
\vs
Then 
$$
\P\left[\tau^N<T\right]  \leq \P\left[ \sup_{t\in [t_0,\tau^N]} {\bf d}_1(m^N_{{\bf X}^N_t}, m(t))\geq \delta/2\right] \leq C \delta^{-1} N^{-1/(d+8)}.
$$
\end{proof}

Next,  for $(t,\bx) \in V^N_\delta$,  we set 
$$
\mathcal U^N(t, {\bf x})= \mathcal U(t, m^N_{{\bf x}}).
$$
It is immediate that  $\mathcal U^N$ is $C^{1,1}$ on $V^N_\delta$ and (see, for example, \cite{CDLL}) 
$$
D_{x^j} \mathcal U^N(t, {\bf x})= \frac1N D_m \mathcal U(t, m^N_{{\bf x}}, x^j) \ \ \text{and} \ \ | D^2_{x^jx^j} \mathcal U^N(t, {\bf x})- \frac1N D^2_{ym} \mathcal U(t, m^N_{{\bf x}}, x^j)|\leq  \frac{C}{N^2}. 
$$
Finally,  $\mathcal U^N$ satisfies
\be\label{eq.OON}
\left\{\begin{array}{l}
\ds -\partial_t \mathcal U^N(t,{\bf x}) -\sum_{j=1}^N \Delta_{x^j}\mathcal U^N(t,{\bf x}) +O^N(t,{\bf x})
+\frac1N\sum_{j=1}^N H(x^j, N D_{x^j}\mathcal U^N(t, {\bf x}))=\mathcal F(m^N_{{\bf x}})\ \  {\rm in}\ \ V^N_\delta, \\
\ds  \mathcal U^N(T,{\bf x})= \mathcal G(m^N_{{\bf x}})\ \  {\rm on}\ \  (\R^d)^N,
\end{array}\right.
\ee
where 
\be\label{evalON}
|O^N(t,{\bf x})|\leq CN^{-1}\qquad a.e. .
\ee

\begin{lem}\label{lem.sldjkfncv}  Let ${\bf Y}^N=(Y^{N,i})_{i=1,\ldots, N}$ and $\tilde \tau^N$ be defined by \eqref{takis51} and \eqref{def.tildetau} respectively. Then, 
\begin{align*}
& \E\bigl[ \int_{t_0}^{\tilde \tau^N} N^{-1}\sum_j |H_p(Y^{N,j}_t, ND_{x^j}\mathcal U^N)-H_p(Y^{N,j}_t, ND_{x^j}\mathcal V^N)|^2dt \bigr]  \leq  C(N^{-1}+ R^N), 
\end{align*}
where $R^N=\|\mathcal U^N-\mathcal V^N\|_\infty$ and $\mathcal U^N$, $\mathcal V^N$  and their derivatives are evaluated at $(t, {\bf Y}^N_t)$. 
\end{lem}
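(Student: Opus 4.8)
The plan is a verification argument: apply Itô's formula along the trajectory ${\bf Y}^N$ both to $\mathcal V^N$, which solves its Hamilton--Jacobi--Bellman equation exactly, and to $\mathcal U^N=\mathcal U(\cdot,m^N_\cdot)$, which by \eqref{eq.OON}--\eqref{evalON} solves the \emph{same} HJB equation on $V^N_\delta$ up to the $O(1/N)$ error $O^N$; the difference of the two expansions, after cancellation of the first-order terms, the Laplacians (absorbed via the PDEs) and the $\mathcal F(m^N)$ terms, leaves precisely a Bregman-type remainder of $H$, which by strict convexity dominates the quantity to be estimated. One may assume $(t_0,{\bf Z})\in V^N_{\delta/2}$, since otherwise $\tilde\tau^N=t_0$ and the asserted inequality is trivial.

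First I would record the uniform gradient bounds. By Lemma~\ref{lem.estiVN} one has $N|D_{x^j}\mathcal V^N|\le C$, and on $V^N_\delta$ the identity $ND_{x^j}\mathcal U^N(t,{\bf x})=D_m\mathcal U(t,m^N_{{\bf x}},x^j)$ together with Lemma~\ref{lem.reguum} (it equals $Du$ for the associated multiplier, which is bounded uniformly) gives $|ND_{x^j}\mathcal U^N|\le C$ as well. Writing $P^j:=ND_{x^j}\mathcal U^N$ and $Q^j:=ND_{x^j}\mathcal V^N$, evaluated at $(t,{\bf Y}^N_t)$, all the $P^j,Q^j$ therefore lie in a fixed ball $\overline{B_R}$ for $t\le\tilde\tau^N$. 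On this ball, the strict convexity \eqref{convex} of $H(x,\cdot)$ yields the lower bound
\[
H(x,P)-H(x,Q)-H_p(x,Q)\cdot(P-Q)\ \ge\ c_R|P-Q|^2\ \ge\ c\,|H_p(x,P)-H_p(x,Q)|^2,
\]
the last inequality because $H_p(x,\cdot)$ is $C_R$-Lipschitz on $\overline{B_R}$ by \eqref{convex}. Next, since ${\bf Y}^N_t\in V^N_\delta$ on $[t_0,\tilde\tau^N]$, I would apply Itô's formula to $\mathcal U^N(t,{\bf Y}^N_t)$ and to $\mathcal V^N(t,{\bf Y}^N_t)$ and subtract; using that the drift of $Y^{N,j}$ is $-H_p(Y^{N,j}_t,Q^j)$ and substituting \eqref{eq.OON} and the HJB equation for $\mathcal V^N$, one gets
\[
d(\mathcal U^N-\mathcal V^N)(t,{\bf Y}^N_t)=\Big(O^N(t,{\bf Y}^N_t)+\tfrac1N\sum_j\big[H(Y^{N,j}_t,P^j)-H(Y^{N,j}_t,Q^j)-H_p(Y^{N,j}_t,Q^j)\cdot(P^j-Q^j)\big]\Big)dt+dM_t,
\]
where $M$ is a genuine martingale (its bracket is $2\int\sum_j|D_{x^j}\mathcal U^N-D_{x^j}\mathcal V^N|^2\le CT/N$, since each gradient is $O(1/N)$). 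Integrating from $t_0$ to the bounded stopping time $\tilde\tau^N$ and taking expectations kills $M$; bounding $|\mathcal U^N-\mathcal V^N|\le R^N$ at the two endpoints, $|O^N|\le CN^{-1}$ a.e.\ (valid along the non-degenerate diffusion ${\bf Y}^N$), and $\tilde\tau^N-t_0\le T$, gives
\[
\E\Big[\int_{t_0}^{\tilde\tau^N}\tfrac1N\sum_j\big(H(Y^{N,j}_t,P^j)-H(Y^{N,j}_t,Q^j)-H_p(Y^{N,j}_t,Q^j)\cdot(P^j-Q^j)\big)dt\Big]\le 2R^N+CN^{-1}.
\]
Combining with the convexity lower bound of the previous display yields the claim with a new constant $C=C((t_0,m_0))$.

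The only delicate point is the legitimacy of Itô's formula for $\mathcal U^N$. Theorem~\ref{thm.mainreguU0} provides that $\mathcal U$ is a classical solution of the master equation with $D^2_{ym}\mathcal U$ continuous and, by \eqref{takis50}, $D_m\mathcal U$ locally Lipschitz in $(m,x)$; this is exactly what is needed, since only the diagonal blocks $D^2_{x^jx^j}\mathcal U^N$ enter the Itô expansion of $\mathcal U^N(\cdot,{\bf Y}^N_\cdot)$ (the driving Brownian motions $B^j$ being independent), and the PDE \eqref{eq.OON} may be used in its almost-everywhere form because ${\bf Y}^N$ spends zero time on Lebesgue-null sets. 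If one prefers, the Itô--Krylov formula for $\mathcal U^N\in C^{1,1}\subset W^{2,\infty}_{loc}$ applies directly. Everything else is the routine cancellation described above.
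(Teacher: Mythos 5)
Your proof is correct and is a close variant of the paper's argument, with one presentational difference worth noting. The paper applies Itô's formula only to $\mathcal U^N(t,{\bf Y}^N_t)$, then uses the Legendre transform to rewrite the drift term as $-\frac1N\sum_j L(Y^{N,j},-H_p(Y^{N,j},Q^j))+C^{-1}|H_p(\cdot,P^j)-H_p(\cdot,Q^j)|^2+\ldots$, takes expectations, and finally invokes the dynamic programming principle together with the optimality of ${\bf Y}^N$ to collapse $\E\bigl[\int_{t_0}^{\tilde\tau^N}(\tfrac1N\sum_j L+\mathcal F)\,dt+\mathcal V^N(\tilde\tau^N,{\bf Y}^N_{\tilde\tau^N})\bigr]$ to $\E[\mathcal V^N(t_0,{\bf Z})]$. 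You instead apply Itô to both $\mathcal U^N$ and $\mathcal V^N$ along ${\bf Y}^N$ and subtract; the HJB for $\mathcal V^N$ then plays the role of the DPP in differential form, and the drift of the difference is directly the Bregman divergence of $H$, so no explicit appeal to $L$ or to the DPP is needed. The two are mathematically equivalent (both reduce to the same strict-convexity inequality for $H$ on bounded $p$-sets, using $|ND_{x^j}\mathcal U^N|,|ND_{x^j}\mathcal V^N|\le C$ from Lemmas~\ref{lem.estiVN} and~\ref{lem.reguum}), but your organization avoids a detour through convex duality and is slightly more self-contained. You also explicitly address the legitimacy of Itô's formula for the $C^{1,1}$ function $\mathcal U^N$ (only diagonal second derivatives appear, and the a.e.\ PDE \eqref{eq.OON} may be used along the non-degenerate diffusion via Itô--Krylov), a technical point the paper leaves implicit. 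One small remark: the Lipschitz bound you use for $H_p(x,\cdot)$ on $\overline{B_R}$ uniformly in $x$ is not literally written in \eqref{convex} (which bounds $D^2_{xx}H$ and $D^2_{xp}H$, but gives only a lower bound on $D^2_{pp}H$); however the paper itself relies on the same "uniform convexity of $H$ on bounded sets," so this is not a defect of your argument relative to the paper, merely a place where both read a boundedness of $D^2_{pp}H$ on $\R^d\times\overline{B_R}$ into the hypotheses.
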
 

\begin{proof}
For  $t\in [t_0,\tilde \tau^N]$ and, in view of \eqref{evalON}, we have 
\begin{align*}
&d\mathcal U^N(t, {\bf Y}^N_t)= (\partial_t \mathcal U^N +\sum_j \Delta_{x^j} \mathcal U^N- \sum_j H_p(Y^{N,j}_t, ND_{x^j}\mathcal V^N)\cdot D_{x^j}\mathcal U^N)dt+\sqrt{2}\sum_j  D_{x^j}\mathcal U^N\cdot dB^j_t \\
& = (\frac1N \sum_j H(Y^{N,j}, N D_{x^j}\mathcal U^N(t, {\bf Y}^N_t)) - \sum_j H_p(Y^{N,j}_t, ND_{x^j}\mathcal V^N)\cdot D_{x^j}\mathcal U^N+ O^N -\mathcal F(m^N_{{\bf Y}^N_t}))dt\\
& \qquad +\sqrt{2}\sum_j  D_{x^j}\mathcal U^N\cdot dB^j_t \\
& \geq (\frac1N \sum_j (-L(Y^{N,j}, -H_p(Y^{N,j}_t, ND_{x^j}\mathcal V^N))+C^{-1} |H_p(Y^{N,j}_t, ND_{x^j}\mathcal U^N)-H_p(Y^{N,j}_t, ND_{x^j}\mathcal V^N)|^2) \\ 
& \qquad \qquad   -CN^{-1} -\mathcal F(m^N_{{\bf Y}^N_t}))dt +\sqrt{2}\sum_j  D_{x^j}\mathcal U^N\cdot dB^j_t,
\end{align*}
the inequality following from the uniform convexity of $H$ in bounded sets. 

\vs
We take expectations and integrate between $t_0$ and $\tilde \tau^N$ above to get 
\begin{align*}
&\E\left[\mathcal U^N(\tilde \tau^N, {\bf Y}^N_{\tilde \tau^N})\right] -\E\left[\mathcal U^N(t_0, {\bf Z}^N)\right] \\
& \geq \E\bigl[ \int_{t_0}^{\tilde \tau^N} (\frac1N \sum_j \Bigl(-L(Y^{N,j}, -H_p(Y^{N,j}_t, ND_{x^j}\mathcal V^N))+C^{-1} |H_p(Y^{N,j}_t, ND_{x^j}\mathcal U^N)-H_p(Y^{N,j}_t, ND_{x^j}\mathcal V^N)|^2\Bigr) \\ 
& \qquad \qquad   -CN^{-1} -\mathcal F(m^N_{{\bf Y}^N_t}))dt \bigr].
\end{align*}
Rearranging, using the definition of $R^N$, the dynamic programming principle and the optimality of ${\bf Y}^N$ for $\mathcal V^N(t_0, {\bf Z}^N)$ we find  
\begin{align*}
&\E\left[\mathcal U^N(t_0, {\bf Z}^N)\right] + \E\bigl[ \int_{t_0}^{\tilde \tau^N} \frac{1}{CN} \sum_j |H_p(Y^{N,j}_t, ND_{x^j}\mathcal U^N)-H_p(Y^{N,j}_t, ND_{x^j}\mathcal V^N)|^2dt \bigr] \\ 
& \leq \E\bigl[ \int_{t_0}^{\tilde \tau^N} (\frac1N \sum_j L(Y^{N,j}, -H_p(Y^{N,j}_t, ND_{x^j}\mathcal V^N))+CN^{-1} +\mathcal F(m^N_{{\bf Y}^N_t}))dt 
+\mathcal V^N(\tilde \tau^N, {\bf Y}^N_{\tilde \tau^N})\bigr] + R^N\\ 
& \leq \E\left[\mathcal V^N(t_0, {\bf Z}^N)\right]+ CN^{-1}+ R^N,
\end{align*}
and, using  once more the definition of $R^N$,  we get 
\begin{align*}
& \E\bigl[ \int_{t_0}^{\tilde \tau^N} \frac{1}{CN} \sum_j |H_p(Y^{N,j}_t, ND_{x^j}\mathcal U^N)-H_p(Y^{N,j}_t, ND_{x^j}\mathcal V^N)|^2dt \bigr]  \leq  CN^{-1}+ 2R^N. 
\end{align*}
\end{proof}

\begin{lem}[Convergence of optimal trajectories]\label{lem.CvOptiTraj} For   ${\bf X}^N=(X^{N,i})$ and ${\bf Y}^N=(Y^{N,i})$  defined by \eqref{def.XNi} and \eqref{takis51} respectively, we have
$$
 \E\left[ \sup_{s\in [t_0, \tilde \tau^N]}  N^{-1}\sum_j | X^{N,j}_{s}-Y^{N,j}_{s}| \right]\leq C(N^{-1}+ R^N)^{1/2}
 $$
and
 $$
 \P\left[ \tilde \tau^N<T\right]\leq C (N^{-1/(d+8)}+ (R^N)^{1/2}). 
 $$
\end{lem}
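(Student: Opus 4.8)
The plan is to run a pathwise Gronwall argument on the random interval $[t_0,\tilde\tau^N]\subseteq[t_0,\tau^N]$, on which, by construction, both $({\bf X}^N_t)$ and $({\bf Y}^N_t)$ stay in $V^N_\delta$, where $D_m\mathcal U$ is Lipschitz in the sense of \eqref{DmULipsch}, and then to convert the resulting $L^2$-in-particles bound into the $L^1$-in-particles statement. Since the trajectories \eqref{def.XNi} and \eqref{takis51} start from the same $Z^j$ and use the same Brownian motions $B^j$, the stochastic terms cancel and, writing $ND_{x^j}\mathcal U^N(s,{\bf x})=D_m\mathcal U(s,m^N_{{\bf x}},x^j)$,
\[
X^{N,j}_t-Y^{N,j}_t=-\int_{t_0}^t\big(H_p(X^{N,j}_s, ND_{x^j}\mathcal U^N(s,{\bf X}^N_s))-H_p(Y^{N,j}_s, ND_{x^j}\mathcal V^N(s,{\bf Y}^N_s))\big)\,ds
\]
for $t\le\tilde\tau^N$. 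Set $a_j(t)=|X^{N,j}_t-Y^{N,j}_t|$ and $A(t)=N^{-1}\sum_j a_j(t)^2$, so $A(t_0)=0$. Recall that $ND_{x^j}\mathcal U^N$ and $ND_{x^j}\mathcal V^N$ are bounded uniformly in $N$ (Lemma~\ref{lem.estiVN} and the Lipschitz bound on $\mathcal U$), so by \eqref{convex} the map $H_p$ is Lipschitz in both variables on the relevant bounded set.

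Next I would insert and subtract $H_p(Y^{N,j}_s, ND_{x^j}\mathcal U^N(s,{\bf Y}^N_s))$ inside the integral. By the Lipschitz continuity of $H_p$ and \eqref{DmULipsch} (applicable since, for $s\le\tilde\tau^N\le\tau^N$, both $m^N_{{\bf X}^N_s}$ and $m^N_{{\bf Y}^N_s}$ lie within $\delta$ of $m(s)$) the first resulting difference is bounded by $C(a_j(s)+{\bf d}_2(m^N_{{\bf X}^N_s},m^N_{{\bf Y}^N_s}))$, and I denote the remaining term by $\Delta_j(s)=H_p(Y^{N,j}_s, ND_{x^j}\mathcal U^N(s,{\bf Y}^N_s))-H_p(Y^{N,j}_s, ND_{x^j}\mathcal V^N(s,{\bf Y}^N_s))$. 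Differentiating $a_j^2$, summing in $j$, dividing by $N$, and using ${\bf d}_2^2(m^N_{{\bf X}^N_s},m^N_{{\bf Y}^N_s})\le A(s)$ (from the index coupling), Cauchy--Schwarz and Young's inequality, one obtains $\frac{d}{ds}A(s)\le C(A(s)+B(s))$ a.e.\ on $[t_0,\tilde\tau^N]$, with $B(s)=N^{-1}\sum_j|\Delta_j(s)|^2$. Since $A(t_0)=0$, Gronwall gives $\sup_{t\in[t_0,\tilde\tau^N]}A(t)\le C\int_{t_0}^{\tilde\tau^N}B(s)\,ds$, and Lemma~\ref{lem.sldjkfncv} then yields $\E[\sup_{t\in[t_0,\tilde\tau^N]}A(t)]\le C(N^{-1}+R^N)$. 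Finally $N^{-1}\sum_j a_j(t)\le A(t)^{1/2}$ by Jensen, so $\E[\sup_{t\in[t_0,\tilde\tau^N]}N^{-1}\sum_j a_j(t)]\le(\E[\sup_t A(t)])^{1/2}\le C(N^{-1}+R^N)^{1/2}$, which is the first estimate.

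For the second estimate, I would observe that on $\{\tilde\tau^N<T\}$ either $\tau^N<T$, or else $\tau^N=T$ and, by \eqref{def.tildetau}, there is a first time $t=\tilde\tau^N$ with ${\bf d}_2(m^N_{{\bf Y}^N_t},m(t))\ge\delta$; since $\tilde\tau^N\le\tau^N$ we also have ${\bf d}_2(m^N_{{\bf X}^N_{\tilde\tau^N}},m(\tilde\tau^N))\le\delta/2$, so by the triangle inequality and ${\bf d}_2(m^N_{{\bf x}},m^N_{{\bf y}})\le(N^{-1}\sum_j|x^j-y^j|^2)^{1/2}$ we get $A(\tilde\tau^N)\ge\delta^2/4$. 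Hence $\P[\tilde\tau^N<T]\le\P[\tau^N<T]+\P[\sup_{t\in[t_0,\tilde\tau^N]}A(t)\ge\delta^2/4]$, and combining Lemma~\ref{lem.aqemzjlrsdk}, Markov's inequality and $\E[\sup_t A(t)]\le C(N^{-1}+R^N)$ gives $\P[\tilde\tau^N<T]\le C(N^{-1/(d+8)}+R^N)\le C(N^{-1/(d+8)}+(R^N)^{1/2})$, the last inequality because the left side is at most a constant, so only the regime $R^N\le1$ is relevant.

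The step I expect to be the main obstacle is the mismatch between the $L^2$-type Wasserstein distance ${\bf d}_2$ controlled by \eqref{DmULipsch} and the $L^1$-in-particles average in the statement: this is exactly why the Gronwall loop has to be closed on $A(t)=N^{-1}\sum_j|X^{N,j}_t-Y^{N,j}_t|^2$, for which ${\bf d}_2^2\le A$ provides the needed self-consistency, and only afterwards downgraded via Jensen. A secondary but essential bookkeeping point is that every constant be independent of $N$, which rests on the uniform bounds $N\|D_{x^j}\mathcal V^N\|_\infty\le C$, the boundedness of $D_m\mathcal U$, and the uniform validity of \eqref{DmULipsch} on a fixed neighborhood of the compact curve $\{(t,m(t)):t\in[t_0,T]\}$, all of which are already available.
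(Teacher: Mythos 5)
Your proof is correct and follows the same overall strategy as the paper's: insert and subtract $H_p(Y^{N,j}_t, ND_{x^j}\mathcal U^N(t,{\bf Y}^N_t))$, control one difference by Lemma~\ref{lem.sldjkfncv}, control the other by the Lipschitz estimate \eqref{DmULipsch} for $D_m\mathcal U$, run Gronwall on $[t_0,\tilde\tau^N]$, and then combine Lemma~\ref{lem.aqemzjlrsdk} with Markov's inequality for the stopping-time bound. The one genuine difference is a refinement worth noting: the paper runs Gronwall directly on the $L^1$-in-particles average $N^{-1}\sum_j|X^{N,j}_s-Y^{N,j}_s|$ and leaves implicit how the ${\bf d}_2(m^N_{{\bf X}^N_s},m^N_{{\bf Y}^N_s})$ term produced by \eqref{DmULipsch} (an $L^2$-type quantity bounded only by $(N^{-1}\sum_j|X^{N,j}_s-Y^{N,j}_s|^2)^{1/2}$) is reabsorbed into this $L^1$ average; this is repairable using the uniform boundedness of the drifts, but is not spelled out. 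Your choice to close the loop on $A(t)=N^{-1}\sum_j|X^{N,j}_t-Y^{N,j}_t|^2$, for which ${\bf d}_2^2\leq A$ and $N^{-1}\sum_j a_j\leq A^{1/2}$ hold without any a priori input, sidesteps the mismatch entirely and then downgrades to $L^1$ via Jensen, which is arguably the cleaner way to write the argument. The second estimate is handled in the same spirit (bounding $\P[\sup A\geq\delta^2/4]$ rather than $\P[\sup N^{-1}\sum_j a_j>\delta/2]$, plus the observation that the whole probability is at most $1$ so $R^N\leq(R^N)^{1/2}$ on the relevant range), reaching the paper's conclusion.
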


\begin{proof} Lemma~\ref{lem.sldjkfncv}, the regularity of $\mathcal U^N$ in  \eqref{DmULipsch} and an application of Gronwall's inequality  give the first inequality since, for any $t\in [t_0,T]$,  
\begin{align*}
& \E\left[ \sup_{s\in [t_0,t\wedge \tilde \tau^N]}  N^{-1}\sum_j | X^{N,j}_{s}-Y^{N,j}_{s}| \right]\\
& \leq \E\bigl[ \int_{t_0}^{t\wedge \tilde \tau^N}N^{-1} \sum_j |H_p(Y^{N,j}_t, ND_{x^j}\mathcal U^N(t, {\bf Y}^N_t))-H_p(Y^{N,j}_t, ND_{x^j}\mathcal V^N(t, {\bf Y}^N_t))|dt \bigr]\\
& \qquad + 
\E\bigl[ \int_{t_0}^{t\wedge \tilde \tau^N}N^{-1} \sum_j |H_p(Y^{N,j}_t, ND_{x^j}\mathcal U^N(t, {\bf Y}^N_t))-H_p(X^{N,j}_t, ND_{x^j}\mathcal U^N(t, {\bf X}^N_t))|dt \bigr] \\ 
&\leq  C(N^{-1}+ R^N)^{1/2} + C N^{-1}\sum_j \E\left[\int_{t_0}^{t\wedge \tilde \tau^N} 
|X^{N,j}_{s}-Y^{N,j}_{s}|ds\right]. 
\end{align*}
\vs
Then,
\begin{align*}
 \P\left[ \tilde \tau^N<T \right] & \leq  \P\left[ \tau^N<T\right]+ \P\left[ \sup_{s\in [t_0, \tilde \tau^N]}  N^{-1}\sum_j | X^{N,j}_{s}-Y^{N,j}_{s}| >\delta/2\right] \\
&  \leq  C N^{-1/(d+8)} + C\delta^{-1} (N^{-1}+ R^N)^{1/2} .
\end{align*}
\end{proof}

We can proceed now with the proof of the propagation of chaos property. 

\begin{proof}[Proof of Theorem \ref{thm.main20}] It is immediate that 
\be\label{takis52}
\begin{split}
 & \E\left[  \sup_{s\in [t_0, \tilde \tau^N]} {\bf d}_1(m^N_{{\bf Y}^N_s}, m(s)) \right]   \leq
\E\left[  \sup_{s\in [t_0, \tilde \tau^N]} {\bf d}_1(m^N_{{\bf Y}^N_s}, m^N_{{\bf X}^N_s}) \right] +
  \E\left[  \sup_{s\in [t_0, \tilde \tau^N]} {\bf d}_1(m^N_{{\bf X}^N_s}, m(s)) \right] \\
  & \qquad  \leq  \E\left[  \sup_{s\in [t_0, \tilde \tau^N]}N^{-1}\sum_j | X^{N,j}_{s}-Y^{N,j}_{s}| \right] +  \E\left[  \sup_{s\in [t_0, \tilde \tau^N]} {\bf d}_1(m^N_{{\bf X}^N_s}, m(s)) \right] .
\end{split}
\ee
Lemma~\ref{lem.CvOptiTraj} gives that  the first term in the right-hand side of \eqref{takis52}  is not larger than $C(N^{-1}+ R^N)^{1/2} $, where $R^N$ can be estimated  by  Proposition \ref{prop.quantifCV}, while  Lemma \ref{lem.aqemzjlrsdk} implies that  the second term in the right-hand side of \eqref{takis52}  is not larger than $C N^{-1/(d+8)}$.
\vs
Thus,  for some $\gamma'$ depending on $d$ only and  $C$ depending on the initial condition $(t_0,m_0)$,
\begin{align*}
 \E\left[  \sup_{s\in [t_0, \tilde \tau^N]} {\bf d}_1(m^N_{{\bf Y}^N_s}, m(s)) \right]  & \leq CN^{-\gamma'}. 
\end{align*}

Finally, we have 
\begin{align*}
&  \E\left[  \sup_{s\in [t_0, T]} {\bf d}_1(m^N_{{\bf Y}^N_s}, m(s)) \right]  \\
&\qquad  \leq \E\left[  \sup_{s\in [t_0, \tilde \tau^N]} {\bf d}_1(m^N_{{\bf Y}^N_s}, m(s)) \right]  
 +  \E\left[  \sup_{s\in [t_0, T]} {\bf d}_1^2(m^N_{{\bf Y}^N_s}, m(s)) \right]^{1/2} \P\left[ \tilde \tau^N<T\right]^{1/2} \\
 & \qquad \leq CN^{-\gamma'} +C (N^{-1/(d+8)}+ (R^N)^{1/2})^{1/2},
\end{align*}
 the last inequality coming  from Lemma \ref{lem.CvOptiTraj} and the facts  that, since, in view of Lemma \ref{lem.reguum},  $m(s)$ has a uniformly bounded second-order moment  and, by Lemma~\ref{lem.estiVN}, the drift of the process ${\bf Y}^N$ is also uniformly bounded,
$$
 \E\left[  \sup_{s\in [t_0, T]} {\bf d}_1^2(m^N_{{\bf Y}^N_s}, m(s)) \right] \leq C
 $$
 
 Using once more Proposition \ref{prop.quantifCV}, we get that, for a new constant $\gamma''\in (0,1)$ depending on $d$ only and a new constant $C'$ depending on the initial condition $(t_0,m_0)$,
 \begin{align*}
&  \E\left[  \sup_{s\in [t_0, T]} {\bf d}_1(m^N_{{\bf Y}^N_s}, m(s)) \right] \leq C'N^{-\gamma''}.
\end{align*}
\end{proof}

\appendix
\section{The proof of Lemma \ref{lem.secondordercond}} 

\begin{proof} A fact  similar to Lemma~\ref{lem.secondordercond} was  given in \cite{BrCa} for the torus and for smooth initial data. Here, we extend 
the argument for the whole space and general initial conditions and slightly simplify it. 

\vs
We begin with  the existence of a solution to \eqref{eq.forsecondorder}, the uniqueness being obvious in view of the regularity of $\alpha$. 
\vs

Fix $\beta \in C^\infty_c((t_0,T]\times \R^d; \R^d)$ and note that the product $m\beta$ is smooth, because the only singularity of $m$ is at time $t_0$. Thus, there exists a unique classical solution to \eqref{eq.forsecondorder}. 
\vs
In order to prove its regularity, fix $t_0<t_1<t_2$, $\xi\in C^{2+\delta}(\R^d)$, let $w$ be the solution to 
$$
-\partial_t w-\Delta w-\alpha(t,x)\cdot Dw=0\ \ {\rm in}\ \ (t_0,T)\times \R^d \ \ \ w(t_2)=\xi\ \  {\rm in}\ \   \R^d,
$$
and 
note that, for a constant $C$ depending only on the data of the problem, since  the regularity of $\alpha$ depends only on the data of the problem, 
\be\label{abzrsdmcn}
\|w\|_\infty+\|Dw\|_{\infty}\leq C \|\xi\|_{W^{1,\infty}} \ \  {\rm and}\ \  \|w\|_{C^{\delta/2,\delta}}+ \|Dw\|_{C^{\delta/2,\delta}}\leq C\|\xi\|_{C^{2+\delta}}.
\ee

Then, 
$$
\int_{\R^d} \rho(t_2,x)\xi (x)dx = \int_{\R^d} w(t_1,x)\rho(t_1,x)dx- \int_{t_1}^{t_2}\int_{\R^d}\beta(t,x)\cdot Dw(t,x) m(t,dx)dx,
$$
and, choosing $t_1=t_0$ and $t_2$ arbitrary in $[t_0,T]$, we get 
\be\label{zilumjngdrskjdhn}
\sup_{t\in[t_0,T]} \|\rho(t)\|_{(W^{1,\infty})'} \leq   \|\beta(t,\cdot)\|_{L^1_m([0,T]\times \R^d)}. 
\ee
In addition, since, thanks to  \eqref{abzrsdmcn},
  $$\|w(t_1,\cdot)-w(t_2,\cdot)\|_{W^{1,\infty}}\leq C(t_2-t_1)^{\delta/2}\|\xi\|_{C^{2+\delta}}$$ 
 using  \eqref{zilumjngdrskjdhn} we find  
\begin{align*}
& \int_{\R^d} (\rho(t_2,x)-\rho(t_1,x))\xi (x)dx\\
& \qquad  = \int_{\R^d} (w(t_1,x)-w(t_2,x))\rho(t_1,x)dx- \int_{t_1}^{t_2}\int_{\R^d}\beta(t,x)\cdot Dw(t,x) m(t,dx)dx\\
&\qquad \leq  \|w(t_1,\cdot)-w(t_2,\cdot)\|_{W^{1,\infty}}\|\rho(t_1,\cdot)\|_{(W^{1,\infty})'}+C(t_2-t_1)^{1/2}\|\beta\|_{L^2_m([0,T]\times \R^d)}\|Dw\|_\infty\\[1.2mm]
&\qquad \leq  C(t_2-t_1)^{\delta/2}\|\beta(t,\cdot)\|_{L^1_m([0,T]\times \R^d)} \  \|\xi\|_{C^{2+\delta}}+C(t_2-t_1)^{1/2}\|\beta\|_{L^2_m([0,T]\times \R^d)}\|\xi\|_{W^{1,\infty}}. 
\end{align*}
The last estimates  proves the existence of a solution $\rho$ for $\beta \in C^0([t_0,T]\times \R^d; \R^d)$ or for $\beta\in L^\infty$ vanishing near $t=t_0$ by approximation.
\vs

Next, let 
$$
J(m',\alpha') =  \int_{t_0}^T (\int_{\R^d} L(x, \alpha'(t,x))m'(t,dx)+\mathcal F(m'(t))) dt +\mathcal G(m'(T)).
$$
The quantity $J(m',\alpha') $  is defined, for instance, for $m'\in C^0([t_0,T], \Pk)$ and $\alpha'\in C^0([t_0,T]\times \R^d; \R^d)$. Let $\beta\in C^\infty_c((t_0,T]\times \R^d)$ and $\rho$ be the classical solution to \eqref{eq.forsecondorder}, and,  for $h>0$ small, let  $m_h\in C^0([t_0,T],\Pk)$ be  the solution to 
$$
\partial_t m_h -\Delta m_h +{\rm div}(m_h (\alpha+h\beta))=0\qquad {\rm in}\; (t_0,T)\times \R^d \ \ \text{and} \ \ 
m_h(t_0)= m_0 \ \ {\rm in}\ \ \R^d.
$$
Then $m_h=m+h\rho+h^2\xi_h$, where $\xi_h$ solves  in the sense of distribution
$$
\partial_t \xi_h -\Delta \xi_h +{\rm div} (\xi_h (\alpha+h\beta)) +{\rm div}(\beta \rho)= 0\ \ {\rm in}\ \  (t_0,T)\times \R^d \ \ \text{and}\ \ 
\xi_h(t_0)= 0\ \  {\rm in}\ \   \R^d.
$$
The regularity of $\alpha$, $\beta$ and  $\rho$ imply that $\|\xi_h\|_\infty\leq C$, with  $C$ depending  on $\beta$,  and, as $h\to0$,  the $(\xi_h)$s converges weakly in $L^\infty-$weak-$\ast$ to the solution $\xi$ of the same equation with $h=0$. 
\vs
Then 
\begin{align*}
& J(m_h,\alpha+h\beta) =\int_{t_0}^T\left(\int_{\R^d} L(x,\alpha+h\beta)m_h(t,dx)+\mathcal F(m_h(t))\right)dt + \mathcal G(m_h(T))  \\
&\qquad =  J(m,\alpha) + h\Bigl\{ \int_{t_0}^T\Bigl(\int_{\R^d} D_\alpha L(x,\alpha)\cdot \beta(t,x) m(t,dx)+\int_{\R^d} L(x,\alpha)\rho(t,x)dx\\
& \qquad\qquad\qquad +\frac{\delta \mathcal F}{\delta m}(m(t))(\rho(t))\Bigr)dt + \frac{\delta \mathcal G}{\delta m}(m(T))(\rho(T)) \Bigr\} \\
& \qquad + \frac{h^2}{2} \Bigl\{ \int_{t_0}^T\Bigl(\int_{\R^d} D_{\alpha\alpha} L(x,\alpha)\beta(t,x)\cdot \beta(t,x) m(t,dx)+2\int_{\R^d} D_\alpha L(x,\alpha)\cdot \beta(t,x) \rho(t,x)dx\\
& \qquad\qquad\qquad +\int_{\R^d} 2 L(x,\alpha)\xi_h(t,x)dx+2\frac{\delta \mathcal F}{\delta m}(m(t))(\xi_h(t))+ \frac{\delta^2 \mathcal F^2}{\delta m}(m(t))(\rho(t),\rho(t))\Bigr)dt \\
& \qquad \qquad \qquad + 2\frac{\delta \mathcal G}{\delta m}(m(T))(\xi_h(T))+ 
\frac{\delta^2 \mathcal G}{\delta m^2}(m(T))(\rho(T),\rho(T)) \Bigr\} + o(h^2).
\end{align*}
The first-order necessary optimality condition implies that the factor of   $h$ above  vanishes and, therefore, the limit as $h$ vanishes of the term in $h^2$ is nonnegative.
\vs
Thus 
\begin{align*}
& \int_{t_0}^T\Bigl(\int_{\R^d} D_{\alpha\alpha} L(x,\alpha)\beta(t,x)\cdot \beta(t,x) m(t,dx)+2\int_{\R^d} D_\alpha L(x,\alpha)\cdot \beta(t,x) \rho(t,x)dx\\
& \qquad\qquad\qquad +\int_{\R^d} 2 L(x,\alpha)\xi(t,x)dx+2\frac{\delta \mathcal F}{\delta m}(m(t))(\xi(t))+ \frac{\delta^2 \mathcal F^2}{\delta m}(m(t))(\rho(t),\rho(t))\Bigr)dt \\
& \qquad \qquad \qquad + 2\frac{\delta \mathcal G}{\delta m}(m(T))(\xi(T))+ 
\frac{\delta^2 \mathcal G}{\delta m^2}(m(T))(\rho(T),\rho(T)) \; \geq \; 0.
\end{align*}
Using the equation satisfied by the multiplier $u$ and the equation satisfied by $\xi$ we find  
\begin{align*}
& \int_{t_0}^T \int_{\R^d} (L(x,\alpha)\xi(t,x)dx+\frac{\delta \mathcal F}{\delta m}(m(t))(\xi(t)))dt+ \frac{\delta \mathcal G}{\delta m}(m(T))(\xi(T))\\
& \qquad = \int_{t_0}^T \int_{\R^d} ((-H(x,Du)-\alpha\cdot Du)\xi(t,x)dx+\frac{\delta \mathcal F}{\delta m}(m(t))(\xi(t)))dt+ \frac{\delta \mathcal G}{\delta m}(m(T))(\xi(T)) \\
&\qquad=  -\int_{t_0}^T\int_{\R^d} Du(t,x)\cdot \beta(t,x) \rho(t,x) dxdt = - \int_{t_0}^T\int_{\R^d} D_\alpha L(x,\alpha)\cdot \beta(t,x) \rho(t,x)dxdt. 
\end{align*}
Inserting  the last  equality in  the previous inequality yields the second-order optimality condition when $\beta$ is smooth. The general case is obtained by approximation using the estimates in the first part of the proof.
\end{proof}

\bibliographystyle{siam}

\bigskip

\noindent ($^{1}$) Ceremade (UMR CNRS 7534), Universit\'{e} Paris-Dauphine PSL\\ Place du Mar\'{e}chal De Lattre De Tassigny 75775 Paris CEDEX 16, France\\
email: cardaliaguet@ceremade.dauphine.fr
\\ \\
\noindent ($^{2}$) Department of Mathematics, 
The University of Chicago, \\
5734 S. University Ave., 
Chicago, IL 60637, USA  \\ 
email: souganidis@math.uchicago.edu

\end{document}